\newtheorem{thm}{Theorem}[section]
\newtheorem{prop}[thm]{Proposition}
\newtheorem{cor}[thm]{Corollary}
\renewcommand{\O}{\Omega}
\newcommand{\ds}{\,\mbox{d}s}
\newcommand\E{{E}}  
\newcommand\dE{{\partial \E}}
\newcommand\Th{{\mathcal T}_h}
\newcommand\Eh{{\mathcal E}_h}
\newcommand\R{\mathbb{R}}
\newcommand\C{\mathbb{C}}
\newcommand\D{\mathbb{D}}
\renewcommand{\P}{{\mathcal P}}  
\def\decapita#1{}
\def\grecomultibold#1#2{\grecobolddef#1\def\secondobold{#2}%
	\ifx#2\finemultibold\let\next\relax\let\secondobold\relax
	\else\let\next\grecomultibold
	\fi\expandafter\next\secondobold}
\def\grecobolddef#1{%
	\edef\dadef{bf\expandafter\decapita\string#1}%
	\expandafter\def\csname\dadef\endcsname{{\neretto #1}}}
\def\neretto#1{\setbox0=\hbox{\mathsurround=0pt$#1$}%
	\kern.02em\copy0 \kern-\wd0
	\kern-.02em\copy0 \kern-\wd0
	\raise.03em\box0 \kern.02em}
\def\bdiv{\mathop{\bf div}\nolimits}
\def\teps{{\bfvarepsilon}}
\def\IE{{\mathcal I}_E}
\def\wbox#1;#2;{\vbox{\hrule\hbox{\vrule height#1mm\kern#2mm\vrule
			height#1mm}\hrule}}
\let\phi\varphi
\newcommand{\bbc}   {\mathbf{c}}
\newcommand{\bbf}   {\mathbf{f}}
\newcommand{\bbg}   {\mathbf{g}}
\newcommand{\bbn}   {\mathbf{n}}
\newcommand{\bbp}   {\mathbf{p}}
\newcommand{\bbq}   {\mathbf{q}}
\newcommand{\bbr}   {\mathbf{r}}
\newcommand{\bbt}   {\mathbf{t}}
\newcommand{\bbu}   {\mathbf{u}}
\newcommand{\bbv}   {\mathbf{v}}
\newcommand{\bbw}   {\mathbf{w}}
\newcommand{\bbx}   {\mathbf{x}}
\def\hpoint#1.#2.#3{{\underline{#1}}_{#2}\cdot
	{\underline{\mathop{\smash{#3}\vphantom{{#1}_{#2}}}}}}
\def\npointp#1.#2{{\underline{#1}}\cdot
	{\underline{\mathop{\smash{#2}\vphantom{{#1}}}}}}
\def\beq{\begin{equation}}
\def\enq{\end{equation}}
\def\bfzero{{\bf 0}}
\newcommand{\dO}    {\text{d}\Omega}
\newcommand{\dEl}    {\text{d} E}
\newcommand{\ljump}{\text{\textlbrackdbl}}
\newcommand{\rjump}{\text{\textrbrackdbl}}
\begin{document}


%
%

%
%

%
%
\title{Hybridization of the Virtual Element Method\\for linear elasticity problems}

\author{Franco Dassi}
\address{Dipartimento di Matematica e Applicazioni,\\
Universit\`a degli studi di Milano Bicocca,\\
Via Roberto Cozzi 55 - I-20125 Milano, Italy\\
franco.dassi@unimib.it}

\author{Carlo Lovadina\footnote{Corresponding author}}
\address{Dipartimento di Matematica, Universit\`a di Milano,\\
Via Saldini 50, I-20133 Milano, Italy\\
\vspace*{0.2cm}
IMATI del CNR, Via Ferrata 1, 27100 Pavia, Italy\\
carlo.lovadina@unimi.it}
\author{Michele Visinoni}
\address{Dipartimento di Matematica, Universit\`a di Milano,\\
Via Saldini 50, I-20133 Milano, Italy\\
michele.visinoni@unimi.it}

\maketitle


\begin{abstract}
\section*{Abstract}
We extend the hybridization procedure proposed in Ref.~\refcite{ArnoldBrezzi_1985} to the Virtual Element Method for linear elasticity problems based on the Hellinger-Reissner principle.
To illustrate such a technique, we focus on the 2D case, but other methods and 3D problems can be considered as well. We also show how to design a better approximation of the displacement field using a straightforward post-processing procedure. 
The numerical experiments confirm the theory for both two and three-dimensional problems.
\end{abstract}

\keywords{Virtual Element Methods; Elasticity Problems; Hybridization.}

\ccode{AMS Subject Classification: 65N30, 65N12, 65N15}

\section{Introduction}
The Virtual Element Method (VEM) is a generalization of the Finite Element Method (FEM), which allows to deal with general polytopal meshes, also including non-convex or distorted elements, as well as hanging nodes, see Refs.~\refcite{volley,hitchhikers}.
The fundamental idea of this technology is hidden behind the definition of the approximation spaces:
VEM spaces contain suitable polynomials as FEM, but also some other functions, that are solutions of a local PDE.
Thereby, we do not know the discrete functions pointwise, but we know them only by means of a limited set of information, i.e., the degrees of freedom. Nevertheless, the available information is sufficient to construct the stiffness matrix and the right-hand side to set up and solve the discrete scheme.
In Structural Mechanics and elasticity fields the flexibility in handling general polygonal and polyhedral meshes ensures an high success of VEM in such communities. Here we mention, as a representative non-exhaustive sample, a brief list of papers in the framework of structural mechanics problems: Refs.~\refcite{ABLS_part_I,ABLS_part_II,ARTIOLI2020112667,ARTIOLI_RICOVERYVEM,BeiraodaVeiga-Brezzi-Marini:2013,BeiraoLovaMora,CHI2017148,Paulino-VEM,Brezzi-Marini:2012,wriggers2017,DALTRI2021113663,Hudobivnik2019,ZHANG20181}.
Some examples of other numerical methods for the elasticity problem that can handle polytopal meshes are Refs.~\refcite{BOTTI201996,CockburnFu,CockburnShi,Di-Pietro.Ern:15*2}.
In this paper we focus on the numerical approximation of the elasticity problem, using the Hellinger-Reissner variational formulation (see Ref. \refcite{BoffiBrezziFortin}, for example), and in particular we consider the so-called hybridization procedure.
The idea behind the hybridization strategy dates back to 1965, see Ref.~\refcite{Freajis_de_Veubeke}, and it has been used as an implementation technique (for FEM) to solve 2nd order differential problems in mixed form (see Ref. \refcite{BoffiBrezziFortin}, for instance). Essentially, this procedure consists in using Lagrange multipliers to impose the required continuity constraints across the inter-elements, rather than enforcing them directly in the approximation spaces. Then, a static condensation technique is employed to obtain a symmetric and positive definite linear system. Consequently, the resolution of a large indefinite linear system is replaced by the resolution of a lower dimensional positive definite one. Other than such an advantage, a suitable post-processing procedure of the discrete solution and the Lagrange multipliers leads to an improved approximation of the displacement variable, see Ref.~\refcite{ArnoldBrezzi_1985}.
We here present and analyse the hybridization technique applied to
Hellinger-Reissner conforming VEMs. In contrast to FEMs, the flexibility of VEMs allows us to design cheap and conforming schemes with a-priori symmetric Cauchy stresses both in 2D and 3D, see Refs.\refcite{ARTIOLI2017155}, \refcite{ARTIOLI2018978} and \refcite{DLV}. One interesting aspect of the proposed VEM scheme is that it does not exploit point values at the mesh vertices, so the hybridization procedure becomes more straightforward.
However, we wish to recall that several Hellinger-Reissner FEM schemes have been proposed: as a few examples, we cite the conforming one presented in Ref. \refcite{ArnoldWinther}, the one based on composite elements studied in Ref. \refcite{JohnsonMercier}, the one based on the symmetry reduction detailed in Ref. \refcite{Arnold1984}, and the one based on the recent interesting approach analysed in Ref. \refcite{Schoeberl2,Schoeberl1}.

A brief outline of the paper is as follows. In Sec.~\ref{section:modelProblem} we present the continuous elasticity problem.
Sec.~\ref{section:hybrid} introduces the hybridization technique with its computational aspects. The 2D low-order VEM scheme studied in Ref. \refcite{ARTIOLI2017155} has been selected to illustrate the procedure.
In Sec.~\ref{section:h_errorAnalysis} an error analysis is developed for the above-mentioned method, both recalling known results, and proving new estimates regarding the Lagrange multipliers.
In Sec.~\ref{section:post-processing} we propose and study a post-processed displacement solution which exploits the information provided by the computed Lagrange multipliers. We highlight that in several points our analysis follows the guidelines detailed in Ref. \refcite{ArnoldBrezzi_1985} for the laplacian problem in mixed form. However, the VEM approach here requires peculiar technical tools which often differ from the typical ones of FEMs. 
In Sec.~\ref{section:hybridNumer} we present some experiments to give numerical evidence of the proposed VEM approach for both two and three dimensional problems. In the last section we draw some conclusions.

\paragraph{Space notation.} In this paper we will use the standard notation regarding Sobolev spaces, norms and seminorms, see for instance Ref.~\refcite{Lions-Magenes}. Given two quantities $a$ and $b$, we write $a \lesssim b$ when there exists a constant $C$, independent of the mesh size (but possibly dependent on the regularity of the continuous elastic problem), such that $a\leq C b$. Moreover, given any subset $A\subset \R^d$ and an integer $k\geq 0$, we denote by $\P_k(A)$ the space of polynomials up to degree $k$, defined on $A$; whereas, given a functional space $X$, we indicate with $\left[X\right]_s^{d\times d}$ the $d\times d$ symmetric tensor whose components belong to the space $X$.

\paragraph{Mesh notation.} Given a polygon $E$ with $n_e^E$ edges, we denote its area, diameter and barycenter by $|E|$, $h_E$ and $\bbx_E$. Moreover, we denote by $|e|$ and $\bbx_e$ the length and the middle point of an edge $e$, respectively.

\section{The Hellinger-Reissner elasticity problem}~\label{section:modelProblem}
In the present section, we introduce the elasticity problem which ensues from the Hellinger-Reissner principle, see Refs.~\refcite{BoffiBrezziFortin,Braess:book}. Let $\O$ be a polytopal domain in $\R^d$, $d=2,3$ and we consider the following elasticity problem:
\begin{equation}\label{problem:cont-strong}
\left\lbrace{
	\begin{aligned}
	&\mbox{Find } (\bfsigma,\bbu)~\mbox{such that}\\
	&-\bdiv \bfsigma= \bbf\  &\mbox{in $\Omega$}\\
	& \bfsigma = \C \teps(\bbu)\ &\mbox{in $\Omega$}\\
	&\bbu=\bbg\ &\mbox{in $\partial\Omega$}
	\end{aligned}	
} \right. ,
\end{equation}
where $\bfsigma$ and $\bbu$ represent the stress and the displacement field, respectively. 
In addition, $\bbf$ is a function in $\left[L^2(\O)\right]^d$ which represents the loading term and $\bbg\in \left[H^{1/2}(\partial \O)\right]^d$ is the displacement on boundary. Furthermore, we assume that the elasticity fourth-order symmetric tensor $\C$ is uniformly-bounded, positive-definite and sufficiently smooth.
To set the variational formulation of Problem~\eqref{problem:cont-strong}, we define the spaces
\begin{equation}\label{eq:globalspaces}
U:=\left[L^2(\Omega)\right]^d, \quad\quad 
\Sigma:=\left\{ \bftau\in H(\bdiv;\Omega)\ :\ \bftau \mbox{ is symmetric} \right\}
\end{equation}
with standard norms.
As usual, $H(\bdiv;\Omega)$ is the space of tensor in $\left[L^2(\O)\right]^{d\times d}$ whose divergence is the vector-valued operator in $\left[L^2(\O)\right]^d$. 
We define the bilinear forms $a(\cdot,\cdot): \Sigma \times \Sigma \rightarrow \R$ and $b(\cdot,\cdot): \Sigma \times U \rightarrow \R$ as
\begin{equation}
a(\bfsigma,\bftau):=\int_{\O}\D \bfsigma:\bftau~\dO, \qquad
 b(\bfsigma,\bbu):=\int_{\O}\bdiv\bfsigma\cdot \bbu~\dO,
\end{equation}
where $\D:=\C^{-1}$ is the inverse of the Cauchy tensor. Then the mixed weak formulation of Problem~\eqref{problem:cont-strong} reads
\begin{equation}\label{problem:cont-weak}
\left\lbrace{
	\begin{aligned}
	&\mbox{Find } (\bfsigma,\bbu)\in \Sigma\times U~\mbox{such that}\\
	&a(\bfsigma,\bftau) + b(\bftau, \bbu)=<\bbg,\bftau\,\bbn>  &\forall \bftau\in \Sigma\\
	& b(\bfsigma, \bbv) = -(\bbf,\bbv) &\forall \bbv\in U
	\end{aligned}
} \right. ,
\end{equation}
where  $(\cdot,\cdot)$ is the inner product in  $\left[L^2(\O)\right]^d$, whereas $<\cdot,\cdot>$ is the duality product between $\left[H^{1/2}(\partial \O)\right]^d$ and $\left[H^{-1/2}(\partial \O)\right]^d$. 
It is well-known that Problem~\eqref{problem:cont-weak} is well posed, see for instance Ref.~\refcite{BoffiBrezziFortin}. 
\section{Hybridization procedure}~\label{section:hybrid}
In this section we present the hybridization technique for the mixed approximation of Problem~\eqref{problem:cont-weak}, cf. Ref.~\refcite{Freajis_de_Veubeke}. 
First of all, we recall a typical discrete formulation of Problem~\eqref{problem:cont-weak}:
\begin{equation}\label{problem:descretised-formulation}
\left\lbrace{
	\begin{aligned}
	&\mbox{Find } (\bfsigma_h,\bbu_h)\in \Sigma_h\times U_h~\mbox{such that}\\
	&a_h(\bfsigma_h,\bftau_h) + b_h(\bftau_h, \bbu_h)=<\bbg,\bftau_h\,\bbn>_h  &\forall \bftau_h\in \Sigma_h\\
	& b_h(\bfsigma_h, \bbv_h) = -(\bbf,\bbv_h)_h &\forall \bbv_h\in U_h,
	\end{aligned}
} \right.
\end{equation}
where $\Sigma_h$ and $U_h$ are the global discrete spaces for the stress and displacement field, respectively. Moreover, $a_h(\cdot,\cdot)$, $b_h(\cdot,\cdot)$, $<\bbg,\cdot>_h$ and $(\bbf,\cdot)_h$ are suitable approximations of the corresponding bilinear and linear forms. 
More details about possible choices of the spaces for a conforming low-order VEM can be found in Refs.~\refcite{ARTIOLI2017155,DLV}. 
The linear system associated with~\eqref{problem:descretised-formulation} has the following form
\begin{equation}\label{eq:linearSystem_matrix}
\begin{pmatrix}
A & B\\
B^T & 0   
\end{pmatrix}
\begin{pmatrix}
\bfsigma_h \\
\bbu_h  
\end{pmatrix}
=
\begin{pmatrix}
G \\
F   
\end{pmatrix}
\end{equation}
whose matrix is indefinite.
The hybridization procedure is an implementation technique which leads to solve a linear system with a symmetric and positive definite matrix instead of the original indefinite one~\eqref{eq:linearSystem_matrix}.
The procedure is split into two different steps: the imposition of the stress $H(\bdiv)$-conformity requirement through the introduction of suitable Lagrange multipliers, and the static condensation algorithm.
\begin{remark}
	It is worth noticing that the possibility to perform hybridization highly depends on the particular features of the discrete scheme, and it is not always possible. With this respect, the structure of the discrete space $\Sigma_h$ is essential.
\end{remark}
In what follows, we illustrate the hybridization procedure using the 2D VEM scheme presented in Ref. \refcite{ARTIOLI2017155}. Accordingly, we recall the approximation spaces and the bilinear and linear forms involved in the method. Such quantities are, as usual, defined locally on each element. Afterwards, all the contributions are glued together to form the discrete problem. 
Such procedure can be extended to the 3D case when using the scheme proposed in Ref. \refcite{DLV} (in fact numerical results for such an instance are presented in Sec.~\ref{section:hybridNumer}).
\subsection{A low-order VEM scheme}
Let $\{\Th\}_h$ be a sequence of decompositions of $\O$ into general polygons $E$ and set 
$
h := \sup_{E \in \mathcal{T}_h} h_E$.
We denote by $\Eh$ the set of the edges of $\Th$, while $\mathcal{E}_h^{I}$ and $\mathcal{E}_h^{B}$ are the set of internal and boundary edges of the skeleton $\Eh$, respectively.
For all $h$, we say that $\{\Th\}_h$ is a regular polygonal decomposition if the following assumptions are satisfied:
\begin{itemize}
	\item $\mathbf{(A1)}$  for every edge $e\in\partial E$ we have: $h_e \ge \, \gamma \, h_E$,
	\item $\mathbf{(A2)}$  $E$ is star-shaped with respect to a ball of radius $ \ge\, \gamma \, h_E$,
\end{itemize}
where $\gamma$ is positive constant. The hypotheses above, and in particular $\mathbf{(A2)}$, may be relaxed, see Refs.~\refcite{BLRXX,BrennerSungSmallEdges,CaoChen}. Moreover, we assume that the material tensor $\D$ is piecewise constant with respect to the decomposition $\Th$. This regularity is enough for our low-order method, cf. Ref.~\refcite{ARTIOLI2017155}.
%

%
To describe the local spaces employed in our hybrid VEM scheme, we need to introduce these two elementary (for our scheme) spaces: $RM(E)$ and $R(e)$.
\paragraph{Space $RM(E)$.} It is the space of local infinitesimal rigid body motions:
\begin{equation}\label{eq:h_rigid}
RM(E):=\left\{ \bbr(\bbx) = \bfalpha + \beta\big(\bbx -\bbx_E\big)^{\perp}\quad \text{s.t.}\quad \bfalpha \in\R^2\text{ and } \beta\in\R \right\},
\end{equation}
where if $\bbc=(c_1,c_2)^T$ is a generic vector in $\R^2$, we denote by $\bbc^{\perp}=(c_2,-c_1)^T$ its counterclockwise rotation. The dimension of $RM(E)$ is $3$.


\paragraph{Space $R(e)$.} For each edge $e\in\partial E$, we introduce 
\begin{equation}~\label{eq:h_space_Re}
R(e)=\left\{\bfpsi(s)=c\, \bbt_e +p_1(s)\, \bbn_e\ \: \ c\in\R, \quad p_1(s)\in\P_1(e) \right\}
\end{equation}
where $\bbn_e$ is the outward normal to the edge $e$, and $\bbt_e$ is the tangent vector to the edge $e$, in accordance with its direction. The dimension of such space is $3$. 


\paragraph{Stress space.} Starting from $RM(E)$ and $R(e)$ we can define our local approximation space for the stress field:
\begin{equation}\label{eq:h_local_stress_space}
\begin{aligned}
\Sigma_{h}(E) =\{\bftau_h\  |\ \bftau_h \in H(\bdiv;E)\ :\ &\exists \bbw^\ast\in \left[H^1(E)\right]^2 \mbox{ such that } \bftau_h=\C\teps(\bbw^\ast);\\  &(\bftau_h\,\bbn)_{|e}\in R(e) \quad \forall e\in \partial  E;
\\&\bdiv\bftau_h\in RM(E)\}.
\end{aligned}
\end{equation}
It is easy to see, cf. Ref. \refcite{ARTIOLI2017155}, that $\bdiv \bftau_h\in RM(E)$ is completely determined by the $(\bftau_h\,\bbn)_{|e}$'s.
Therefore, see Fig.~\ref{fig:dofsStandard}, we infer that the dimension of this space is
$3n_e^E$.
\begin{figure}[!ht]
	\centering
	\includegraphics[width=0.55\textwidth]{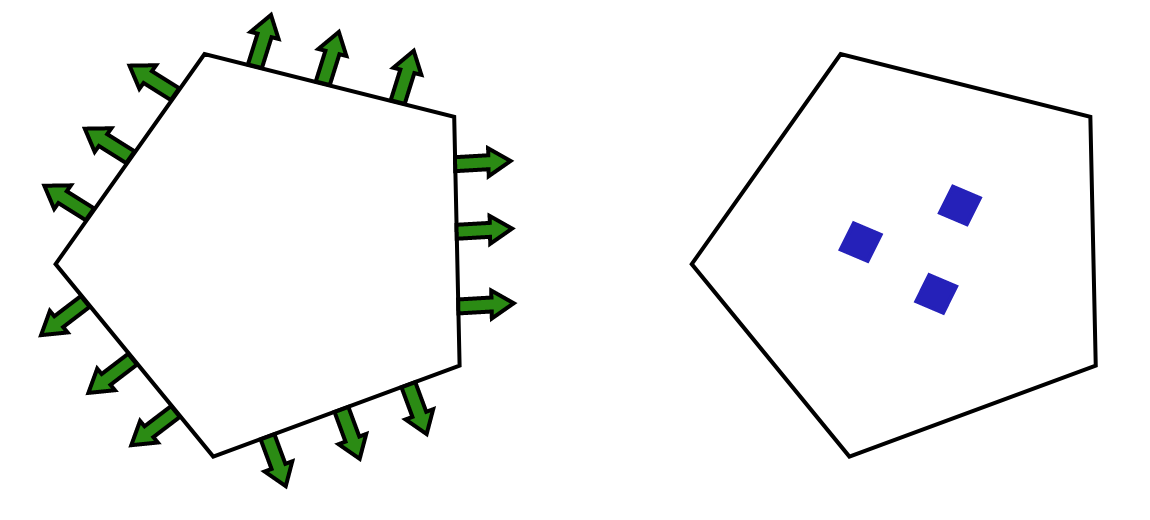}
	\caption{Schematic description of the local degrees of freedom:
		stresses (left); displacements (right).}~\label{fig:dofsStandard}
\end{figure}
\paragraph{Displacement space.}
The local approximation space for the displacement field is defined by, see~\eqref{eq:h_rigid}
\begin{equation}\label{eq:h_local_displacement_space}
U_h(E) =\left\{ \bbv_h \in \left[L^2(\E)\right]^2 \ : \ \bbv_h\in RM(E) \right\}.
\end{equation}
It follows, see Fig.~\ref{fig:dofsStandard}, that
$
\text{dim}(U_h(E))=3.
$
%
We now introduce the local bilinear and linear forms involved in the method.
%
%
\paragraph{The local bilinear form $b_E(\cdot,\cdot)$.}
Given an element $E\in\Th$, we notice that, for every $\bftau_h\in\Sigma_h(E)$ and $\bbv_h\in U_h(E)$, the term
\begin{equation}
b_E(\bftau_h,\bbv_h)=\int_E \bdiv \bftau_h \cdot\bbv_h~\dEl
\end{equation}
is computable thanks to the local degrees of freedom. Therefore, there is not need to introduce any approximation of the global term $b(\bftau, \bbv)$ (hence $b_h(\cdot,\cdot)=b(\cdot,\cdot)$).
%
%
\paragraph{The local bilinear form $a_E(\cdot,\cdot)$.} 
The local bilinear form
\begin{equation}
a_E(\bfsigma_h,\bftau_h)=\int_E \D \bfsigma_h:\bftau_h~\dEl
\end{equation}
is not computable for a general couple $(\bfsigma_h,\bftau_h)\in{\Sigma}_h(E)\times{\Sigma}_h(E)$. We proceed as in the standard VEM setting. We define a suitable projection operator onto the local polynomial functions. 
We introduce $$\Pi_E:{\Sigma}_h(E)\rightarrow\left[\P_0(E)\right]_s^{2\times 2}$$ as follows
\begin{equation}~\label{eq:h_proj}
a_E(\Pi_E \bftau_h,\bfpi_0 )=a_E(\bftau_h,\bfpi_0 ) \quad \forall\bfpi_0\in\left[\P_0(E)\right]^{2\times 2}_s.
\end{equation}
Therefore, $\Pi_E$
is a projection operator onto the constant symmetric tensor functions and it is computable from the degrees of freedom.
Indeed, using the divergence theorem and the fact that each $\bfpi_0 \in \left[\P_0(E)\right]^{2\times 2}_s$ can be written as $\bfpi_0 = \C \teps(\bbp_1)$, with $\bbp_1\in \left[\P_1(E)\right]^{2}$, we rewrite the right-hand side of~\eqref{eq:h_proj} as
\begin{equation}
\begin{aligned}
a_E(\bftau_h,\bfpi_0 ) & =\int_E \D \bftau_h : \bfpi_0~\dEl\\
& =\int_E \D \bftau_h : \C \teps(\bbp_1)~\dEl
=\int_E  \bftau_h : \teps(\bbp_1)~\dEl\\
& = -\int_E  \bdiv \bftau_h\cdot \bbp_1~\dEl+\int_{\partial E} (\bftau_h\, \bbn) \cdot \bbp_1~\ds
\end{aligned}
\end{equation}
which is clearly computable. Then, the approximation of $a_E(\cdot,\cdot)$ reads: 
\begin{equation}\label{eq:h_ah1}
\begin{aligned}
a_E^h(\bfsigma_h,\bftau_h)  &:=
a_E(\Pi_E\,\bfsigma_h,\Pi_E\bftau_h) + s_E\left( (I-\Pi_E)\bfsigma_h, (I-\Pi_E)\bftau_h \right)\\
&=\int_E \D (\Pi_E\bfsigma_h) : (\Pi_E\bftau_h)~\dEl + s_E\left( (I-\Pi_E)\bfsigma_h, (I-\Pi_E)\bftau_h \right) ,
\end{aligned}
\end{equation}
where $s_E(\cdot,\cdot)$ is a symmetric and positive definite bilinear form. We propose the following choice:
\begin{equation}\label{eq:h_stab1}
s_E(\bfsigma_h,\bftau_h) : = \kappa_E\, h_E\int_{\partial E} \bfsigma_h\,\bbn\cdot \bftau_h\,\bbn~\ds,
\end{equation}
where $\kappa_E$ is a positive constant to be chosen according to $\D$. For instance, in the numerical examples of Sec.~\ref{section:hybridNumer}, $\kappa_E$ is set equal to $\frac{1}{2} {\rm tr}(\D_{|E})$. Other choice of~\eqref{eq:h_stab1} can be found in~\refcite{ARTIOLI2017155}.
%
%
%
%
\paragraph{The loading terms.}
Let us start with the body loading term. This term can be split on each element as follows
\begin{equation}
(\bbf,\bbv_h)=\int_{\Omega}\bbf\cdot\bbv_h~\dO=\sum_{E\in\Th}\int_{E}\bbf\cdot\bbv_h~\dEl
\end{equation}
and since $\bbv_h\in RM(E)$, it is computable via quadrature rules for polygonal domains.
Similarly, the boundary term, for a sufficiently regular function $\bbg$,  can be split on each edge  $e\in\Eh^B$ as follows
\begin{equation}
<\bbg,\bftau_h\, \bbn>=\int_{\partial \Omega}\bbg\cdot\bftau_h\, \bbn ~\ds=\sum_{e\in\Eh^B}\int_{e}\bbg\cdot\bftau_h\,\bbn_e~\ds.
\end{equation}
Since  $\bftau_h\in{\Sigma}_h(E)$ and in particular $\bftau_h\,\bbn_e$ is a polynomial function, this term is computable.

\paragraph{Discrete problem.} 
With all the above ingredients the discretization of Problem \eqref{problem:cont-weak} can be defined.
As for standard VEM and FEM schemes, the global spaces are built by gluing local ones and the global forms are obtained summing all the local ones. Thus, we set
\begin{equation}\label{eq:space_sigma_h}
{\Sigma}_{h}=\left\{\bftau_h\in \Sigma\ : \ \bftau_{h_{|E}}\in\Sigma_h(E) \quad \forall E\in\Th\right\}
\end{equation}
and

\begin{equation}\label{eq:space_u_h}
U_h=\left\{\bbv_h\in \left[L^2(\O)\right]^2\ : \ \bbv_{h_{|E}}\in U_h(E) \quad \forall E\in\Th\right\}.
\end{equation}

\noindent Then the discrete scheme reads:
\begin{equation}\label{problem:ourVEM} 
\left\lbrace{
	\begin{aligned}
	&\mbox{Find } (\bfsigma_h,\bbu_h)\in \Sigma_h\times U_h~\mbox{such that}\\
	&a_h(\bfsigma_h,\bftau_h) + b(\bftau_h, \bbu_h)=<\bbg,\bftau_h \,\bbn> &\forall \bftau_h\in \Sigma_h\\
	& b(\bfsigma_h, \bbv_h) = -(\bbf,\bbv_h) &\forall \bbv_h\in U_h.
	\end{aligned}
} \right.
\end{equation}
\subsection{Imposing \texorpdfstring{$H(\bdiv)$}{Lg}-conformity via Lagrange multiplier}
We first note that the space \eqref{eq:space_sigma_h} can be considered as a subspace of the following:

\begin{equation}\label{eq:h_GlobalStressSpaceNonConforming}
\tilde{\Sigma}_{h}(\Th)=\left\{\bftau_h\in \left[L^2(\O)\right]^{2\times 2}\ : \ \bftau_{h_{|E}}\in\Sigma_h(E) \quad \forall E\in\Th\right\}.
\end{equation}
Indeed, we have:
$
\Sigma_{h}=\tilde{\Sigma}_{h}(\Th)\cap H(\bdiv,\O)$.
However, one could try to impose the conformity $\Sigma_{h}\subseteq H(\bdiv;\O)$ using Lagrange multipliers, instead of forcing the regularity directly in the subspace definition. In this VEM setting we proceed as follows.

Given $\Eh^I$, the set of the internal edges of $\Th$, we define the space of the Lagrange multipliers by (cf.~\eqref{eq:h_space_Re}):
\begin{equation}
\Lambda_h(\Eh^I):=\left\{\bfmu_h\in \left[L^2(\Eh^I) \right]^2  :\, \bfmu_{h|e}\in R(e)\quad \forall\, e\in \Eh^I \right\},
\end{equation}
where, with a little abuse of notation, we denote with $L^2(\Eh^I)$ the $L^2$ space defined on the interior skeleton of $\Th$, i.e., the union of $e\in\Eh^I$. 
We observe that the Lagrange multipliers are defined only on the internal edges $\Eh^I$ because their role will be 
to match the normal stresses insisting on interior interfaces, see Fig.~\ref{fig:dofsHybrid}. Indeed a tensor field $\bftau$ is $H(\bdiv)$-regular if its normal component $\bftau\,\bbn$ does not jump across any interface inside $\O$.
\begin{figure}[!ht]
	\centering
	\includegraphics[width=0.55\textwidth]{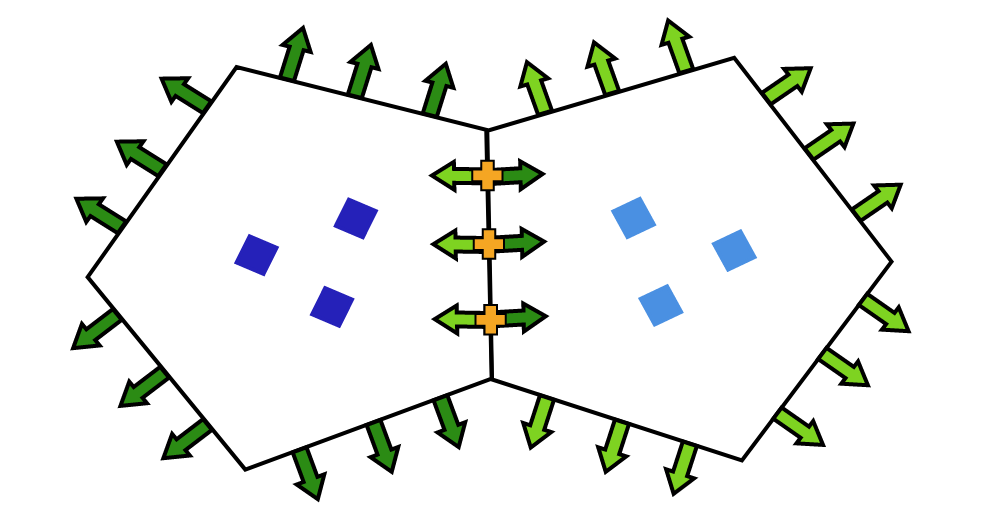}
	\caption{Overview of the degrees of freedom. The degrees of freedom are denoted as follows: square for displacement, arrow for stress and cross for Lagrange multiplier. Dark colors are referred to the left element, while light ones to the right.}~\label{fig:dofsHybrid}
\end{figure}
To force such a continuity we consider the bilinear form 
\begin{equation}
c_h(\cdot,\cdot):\tilde{\Sigma}_{h}(\Th)\times \Lambda_h(\Eh^I)\rightarrow \R
\end{equation}
defined as: 
\begin{equation}\label{eq:c_h_form}
c_h(\bftau_h,\bfmu_h):= -  \sum_{E\in\Th}\int_{\partial E^I}\bfmu_h\cdot \bftau_h\,\bbn~\ds \quad\quad \forall\bftau_h\in\tilde{\Sigma}_{h}(\Th),\  \forall\bfmu_h\in\Lambda_h(\Eh^I),
\end{equation}
where $\partial E^I=\partial E \cap \Eh^I$.
We observe that although $\bftau_h$ is virtual, such bilinear form is computable. Indeed, we are integrating over edges where both $\bfmu_h$ and $\bftau_h\,\bbn$ are polynomials.
We are now ready to set the hybrid version of Problem \eqref{problem:ourVEM}:
%
%
\begin{equation}~\label{HybridProblem}
\left\{
\begin{aligned}
&\mbox{Find } (\bfsigma_h,\bbu_h,\bflambda_h)\in\tilde{\Sigma}_{h}(\Th)\times U_h\times \Lambda_h(\Eh^I)~\mbox{such that}\\
&a_h(\bfsigma_h,\bftau_h) + b(\bftau_h, \bbu_h)+ c_h(\bftau_h,\bflambda_h)=<\bbg,\bftau_h\, \bbn>  &\forall \bftau_h\in \tilde{\Sigma}_h(\Th),\\
& b(\bfsigma_h, \bbv_h) = -(\bbf,\bbv_h) &\forall \bbv\in U_h,\\
& c_h(\bfsigma_h,\bfmu_h)=0  &\forall \bfmu_h\in \Lambda_h(\Eh^I).
\end{aligned}
\right.
\end{equation}
It is easy to prove that the two discrete Problems~\eqref{problem:ourVEM} and~\eqref{HybridProblem} are equivalent. In this particular case, equilavence means that if $(\bfsigma_h,\bbu_h,\bflambda_h)\in\tilde{\Sigma}_{h}(\Th)\times U_h\times \Lambda_h(\Eh^I)$ solves Problem \eqref{HybridProblem}, then  
$(\bfsigma_h,\bbu_h)\in{\Sigma}_{h}\times U_h$ and is the solution of Problem \eqref{problem:ourVEM}. Moreover if $(\bfsigma_h,\bbu_h)\in{\Sigma}_{h}\times U_h$ is the solution of Problem \eqref{problem:ourVEM}, then there is a unique $\bflambda_h\in\Lambda_h(\Eh^I)$ such that $(\bfsigma_h,\bbu_h,\bflambda_h)$ is the solution of Problem \eqref{HybridProblem}.

%

\subsection{Static condensation of stresses and displacements}\label{ss:static_cond}
The matrix form of Problem \eqref{HybridProblem} can be written as

\begin{equation}\label{eq:resultingLinearSystem_matrix}
\begin{pmatrix}
\tilde{A} & \tilde{B} & \tilde{C}\\
\tilde{B}^T & O & O\\
\tilde{C}^T & O & O
\end{pmatrix}
\begin{pmatrix}
\bfsigma_h \\
{\bbu}_h   \\
{\bflambda}_h
\end{pmatrix}
=
\begin{pmatrix}
\tilde{G} \\
\tilde{F}  \\
O
\end{pmatrix}
\end{equation}
where the symbol $\sim$ here highlights that the quantity under consideration refers to the (discontinuous) space \eqref{eq:h_GlobalStressSpaceNonConforming}, rather than the conforming one \eqref{eq:space_sigma_h}. 
Moreover, we remark that the third equation 
\begin{equation}\label{eq:continuityConstraints}
\tilde{C}^T \bfsigma_h = O
\end{equation}
represents the traction continuity for the stress field by means of the multipliers. 
We observe that one of the advantages of having discontinuous stress degrees of freedom is that the matrices $\tilde{A}$ and $\tilde{B}$, corresponding to the discrete bilinear form $a_h(\cdot,\cdot)$ and the mixed term $b(\cdot,\cdot)$ are block matrices. Each block corresponds to the information of a single element in our discretization. Hence, the matrix $\tilde{A}$ is a block diagonal matrix, whose inverse can be found in a fast and cheap way. Then we can compute $\bfsigma_h$ via: 
\begin{equation}\label{eq:first_equation_of_hybrid_system}
\bfsigma_h =\tilde{A}^{-1}(\tilde{G} - \tilde{B} {\bbu}_h - \tilde{C} \bflambda_h).
\end{equation}
Subtracting~\eqref{eq:first_equation_of_hybrid_system} into the second and third equations of~\eqref{eq:resultingLinearSystem_matrix} we have
\begin{equation}\label{eq:second_third_substituting_first}
\begin{pmatrix}
\tilde{B}^T \tilde{A}^{-1} \tilde{B} & \tilde{B}^T \tilde{A}^{-1} \tilde{C}\\
\tilde{C}^T \tilde{A}^{-1} \tilde{B} & \tilde{C}^T \tilde{A}^{-1} \tilde{C} 
\end{pmatrix}
\begin{pmatrix}
{\bbu}_h   \\
\bflambda_h
\end{pmatrix}
=
\begin{pmatrix}
\tilde{B}^T \tilde{A}^{-1} \tilde{G} -\tilde{F}\\
\tilde{C}^T \tilde{A}^{-1} \tilde{G}
\end{pmatrix}
\end{equation}
which is symmetric and positive definite. \\
Now, recalling again that $\tilde{A}$ and $\tilde{B}$ are block matrices, we have that $\tilde{B}^T \tilde{A}^{-1} \tilde{B}$ is a block diagonal matrix, too. As before, it can be inverted in a straightforward way and we get
\begin{equation}~\label{eq:displacementHybridEquation}
{\bbu}_h = (\tilde{B}^T \tilde{A}^{-1} \tilde{B})^{-1}\left[(\tilde{B}^T \tilde{A}^{-1} \tilde{C})\bflambda_h +\tilde{B}^T \tilde{A}^{-1} \tilde{G} -\tilde{F} \right].
\end{equation}
Now, substituting ${\bbu}_h$ in the third equation, our system has the following form  
\begin{equation}~\label{eq:finalLinearSystem}
H \bflambda_h = R
\end{equation}
where
\begin{equation}
H= (-\tilde{C}^T\tilde{A}^{-1}\tilde{B})(\tilde{B}^T\tilde{A}^{-1}\tilde{B})^{-1}(\tilde{B}^T\tilde{A}^{-1}\tilde{C})-\tilde{C}^T\tilde{A}^{-1}\tilde{C}
\end{equation}
and
\begin{equation}
R= -\tilde{C}^T\tilde{A}^{-1}\tilde G+(C^T\tilde{A}^{-1}\tilde{B})(\tilde{B}^T\tilde{A}^{-1}\tilde{B})^{-1}(\tilde{B}^T\tilde{A}^{-1}\tilde G-\tilde F).
\end{equation}
The matrix $H$ is symmetric and positive definite. This is an advantage from a computational viewpoint. Indeed, one can use an ``ad-hoc'' procedure to solve~\eqref{eq:finalLinearSystem}, for instance Cholesky decomposition. Once we have $\bflambda_h$, the displacement and then the stress vectors can be obtained explicitly via matrix-vector multiplication, see~\eqref{eq:displacementHybridEquation} and~\eqref{eq:first_equation_of_hybrid_system}. 
\begin{remark}
	The Lagrange multiplier field has the physical interpretation of (generalized) displacements. As we will see in Sec.~\ref{section:post-processing}, we will use it to design a higher-order (non-conforming) approximation of the displacement field.
\end{remark}
%
\section{Error analysis}\label{section:h_errorAnalysis}
Since the hybridization technique of Sec.~\ref{section:hybrid} can be seen as a computational way to solve the original linear system stemming from the discrete problem \eqref{problem:ourVEM} (cf.~\eqref{eq:linearSystem_matrix}), the error estimates developed in Ref. \refcite{ARTIOLI2017155} hold also for the stress and displacement solutions of the equivalent problem \eqref{HybridProblem}. 
In particular, the following result holds true.

\begin{thm}
	Let $(\bfsigma,\bbu)\in\Sigma\times U$ be the solution of Problem~\eqref{problem:cont-weak}, and let
	$(\bfsigma_h,\bbu_h)\in\tilde\Sigma_h(\Th)\times U_h$ be the discrete stress and displacement solution of Problem~\eqref{HybridProblem}. Under assumptions $\mathbf{(A1)}$ and $\mathbf{(A2)}$ on the mesh, and supposing $(\bfsigma,\bbu)$ sufficiently regular, the following estimate holds true:
	\begin{equation}\label{theorem:standardConvergence}
	||\bfsigma-\bfsigma_h||_{\Sigma} + ||\bbu-\bbu_h||_U \lesssim h.
	\end{equation}
\end{thm}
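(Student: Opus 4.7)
The plan is to reduce the statement to the \emph{a priori} error estimate already established for the non-hybrid conforming VEM scheme in Ref.~\refcite{ARTIOLI2017155}. The argument hinges on the equivalence between the hybrid Problem~\eqref{HybridProblem} and the conforming Problem~\eqref{problem:ourVEM}, which is asserted in the text right after~\eqref{HybridProblem} but which must be substantiated before importing the existing bounds.

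First, I would prove that the stress component $\bfsigma_h\in\tilde{\Sigma}_h(\Th)$ of any solution of~\eqref{HybridProblem} actually lies in $\Sigma_h$, i.e., $\bfsigma_h\in H(\bdiv;\Omega)$. Fix an internal edge $e\in\Eh^I$ shared by two elements $E_+$ and $E_-$ with outward unit normals $\bbn_\pm$, and consider the jump $\bbz_e := (\bfsigma_h|_{E_+})\bbn_+ + (\bfsigma_h|_{E_-})\bbn_-$. By the local space definition~\eqref{eq:h_local_stress_space} each one-sided trace belongs to $R(e)$, hence so does $\bbz_e$. Plugging into the third equation of~\eqref{HybridProblem} the multiplier $\bfmu_h$ that equals $\bbz_e$ on $e$ and vanishes on the remaining internal edges, and using definition~\eqref{eq:c_h_form}, gives $\|\bbz_e\|_{L^2(e)}^2=0$. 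Hence the normal trace is single-valued across every internal edge and $\bfsigma_h\in\Sigma_h$.

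Once conformity has been established, for every test tensor $\bftau_h\in\Sigma_h\subset\tilde{\Sigma}_h(\Th)$ the term $c_h(\bftau_h,\bflambda_h)$ vanishes, because the two contributions coming from a shared edge involve opposite unit normals and a common normal trace. Restricting the first two equations of~\eqref{HybridProblem} to $\Sigma_h\times U_h$ therefore reproduces exactly Problem~\eqref{problem:ourVEM}, which is uniquely solvable thanks to the analysis of Ref.~\refcite{ARTIOLI2017155}. Conversely, given the solution of~\eqref{problem:ourVEM}, a standard surjectivity argument on the normal-trace map onto $R(e)$ produces a unique $\bflambda_h\in\Lambda_h(\Eh^I)$ completing the triple. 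At that point the pair $(\bfsigma_h,\bbu_h)$ appearing in~\eqref{theorem:standardConvergence} coincides with the solution of the conforming scheme, so the bound is exactly the one proved in Ref.~\refcite{ARTIOLI2017155} under the mesh hypotheses $\mathbf{(A1)}$--$\mathbf{(A2)}$ and the assumed regularity of $(\bfsigma,\bbu)$.

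The only non-routine step is the equivalence between the two discrete formulations, and within it the main obstacle is verifying that the Lagrange multiplier space $\Lambda_h(\Eh^I)$ is rich enough to detect every admissible jump of the normal trace. This ultimately rests on the local structure~\eqref{eq:h_space_Re} of $R(e)$ together with the property that $\Sigma_h(E)$ produces normal traces precisely in $R(e)$ on each edge of $\partial E$. Once this matching has been secured, inequality~\eqref{theorem:standardConvergence} follows without any new analytical ingredient.
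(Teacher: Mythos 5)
Your proposal is correct and follows essentially the same route as the paper: the paper's proof consists precisely of invoking the equivalence between the hybrid Problem~\eqref{HybridProblem} and the conforming Problem~\eqref{problem:ourVEM} (asserted as ``easy to prove'' after~\eqref{HybridProblem}) and then importing the a priori estimate of Ref.~\refcite{ARTIOLI2017155}. The only difference is that you actually substantiate the equivalence---via the jump test function $\bfmu_h=\bbz_e$ in the third equation and the vanishing of $c_h(\cdot,\bflambda_h)$ on $\Sigma_h$---where the paper leaves this step implicit.
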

It remains to study the convergence to $\bbu$ of the Lagrange multipliers $\bflambda_h$ (we recall that the multipliers are physically a displacement field).
It is useful to recall, see Ref. \refcite{ARTIOLI2017155}, that there exist an interpolation operator
$$\mathcal{I}_h: W^r(\O) \rightarrow \Sigma_h$$
where
\begin{equation}
W^r(\O):=\left\{\bftau \ : \ \bftau\in \left[L^r(\O)\right]^{2\times 2}_s \quad\text{s.t.}\quad \bdiv\bftau \in \left[L^2(\O)\right]^{2}  \right\}.
\end{equation}
Such an operator is obtained by glueing the local contributions.
We define the local interpolator $\mathcal{I}_E: W^r(E)\rightarrow\Sigma_h(E)$ as
\begin{equation}\label{eq:h_interpolationOperator}
\int_{\partial E} (\mathcal{I}_E\, \bftau)\bbn \cdot \bfphi_*~\ds = \int_{\partial E} \bftau\, \bbn \cdot \bfphi_*~\ds\quad\forall \bfphi_h\in R_*(\partial E),
\end{equation}
where
\begin{equation*}
\begin{aligned}
R_*(\partial E):=\Big\{\bfphi_*\in \left[L^2(\partial E)\right]^2 \, :\, (\bfphi_*)_{|e}=&\bfgamma_e+\delta_e(\bbx-\bbx_E)^{\perp} \,\, \\& \bfgamma_e\in\R^2, \delta_e\in\R, \, \forall e \in\partial E \Big\}.
\end{aligned}
\end{equation*}

The operator $\mathcal{I}_h$ satisfies the following commuting diagram property:
\begin{equation}~\label{eq:commutingDiagram}
\bdiv (\mathcal{I}_h \bftau) = \Pi_{RM}(\bdiv \bftau) \quad \forall \bftau \in W^r(\O),
\end{equation}
where $\Pi_{RM}$ denotes the $L^2$-projection onto the space of the rigid body motions.
Furthermore, the following error estimates hold true.
\begin{prop}~\label{prop:h_error_estimates_interpolations}
	Under the standard mesh assumptions $\mathbf{(A1)}$ and $\mathbf{(A2)}$, for the interpolation operator $\mathcal{I}_E$ defined in~\eqref{eq:h_interpolationOperator} and for each $\bftau$ sufficiently regular we have 
	\begin{equation}
	\left\{
	\begin{aligned}
	&|| \bftau -\IE\bftau||_{0,E}\lesssim h_E |\bftau|_{1,E}\\
	&|| \bdiv(\bftau -\IE\bftau)||_{0,E}  \lesssim h_E |\bdiv\bftau|_{1,E}.
	\end{aligned}
	\right.
	\end{equation}
\end{prop}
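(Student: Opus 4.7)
The plan is to handle the two estimates separately, invoking the commuting diagram for the divergence bound and a Bramble-Hilbert-type argument for the $L^2$-bound.

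For the divergence estimate, the property~\eqref{eq:commutingDiagram} immediately gives
\[
\bdiv(\bftau-\IE\bftau)=\bdiv\bftau-\Pi_{RM}(\bdiv\bftau).
\]
Since $RM(E)\supset[\P_0(E)]^2$ and $\Pi_{RM}$ is the $L^2$-projection, the right-hand side is bounded above by the $L^2$-error of the best constant approximation to $\bdiv\bftau$ on $E$. A standard Poincaré/Bramble-Hilbert argument on the star-shaped element $E$, available under assumption $\mathbf{(A2)}$, then yields the desired $h_E|\bdiv\bftau|_{1,E}$ bound.

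For the first estimate, the key ingredient is that $\IE$ reproduces any constant symmetric tensor. Given $\bfpi_0\in [\P_0(E)]_s^{2\times 2}$, writing $\bfpi_0=\C\teps(\bbp_1)$ with $\bbp_1\in [\P_1(E)]^2$ shows $\bfpi_0\in\Sigma_h(E)$; its normal trace $\bfpi_0\,\bbn_e$ is constant and hence lies in $R(e)$, while $\bdiv\bfpi_0=\bfzero\in RM(E)$. A dimension count (both $\Sigma_h(E)$ and $R_*(\partial E)$ have dimension $3n_e^E$), combined with non-degeneracy of the edgewise pairing between $R(e)$ and $R_*(e)$, shows that the conditions~\eqref{eq:h_interpolationOperator} uniquely determine an element of $\Sigma_h(E)$, whence $\IE\bfpi_0=\bfpi_0$. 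Choosing $\bfpi_0$ as the mean value of $\bftau$ over $E$ and writing $\bftau-\IE\bftau=(I-\IE)(\bftau-\bfpi_0)$, a Poincaré estimate $\|\bftau-\bfpi_0\|_{0,E}\lesssim h_E|\bftau|_{1,E}$ together with the triangle inequality reduces matters to proving the stability bound
\[
\|\IE\bfeta\|_{0,E}\lesssim \|\bfeta\|_{0,E}+h_E|\bfeta|_{1,E}.
\]

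The main obstacle is precisely this stability estimate. Because $\Sigma_h(E)$ contains genuinely virtual, non-polynomial functions, the usual reference-element argument based on equivalence of norms on polynomials is not directly available. The plan is to combine three ingredients: an inverse-type estimate intrinsic to $\Sigma_h(E)$ that controls $\|\bftau_h\|_{0,E}$ by a scaled combination of $\|\bftau_h\,\bbn\|_{0,\partial E}$ and $\|\bdiv\bftau_h\|_{0,E}$, obtained by rescaling to the ball inscribed in $E$ provided by $\mathbf{(A2)}$ and a finite-dimensional norm-equivalence argument whose constants are controlled through $\mathbf{(A1)}$; a multiplicative trace inequality bounding $\|\bfeta\,\bbn\|_{0,\partial E}$ by the local norm of $\bfeta$ in $[H^1(E)]^{2\times 2}_s$; and the commuting diagram~\eqref{eq:commutingDiagram} to control the divergence contribution. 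Together these deliver the required stability, after which a final Bramble-Hilbert step closes the estimate. The overall strategy mirrors the one carried out in detail in Ref.~\refcite{ARTIOLI2017155}.
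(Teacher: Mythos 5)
You should first be aware that the paper itself contains no proof of this proposition: it is recalled, without argument, from Ref.~\refcite{ARTIOLI2017155}, so the only ``in-paper'' proof is a citation, and your attempt has to be measured against the proof given in that reference. Your outline reconstructs that proof correctly in its broad structure. The divergence bound is complete and correct as you state it: by the commuting-diagram property \eqref{eq:commutingDiagram}, $\bdiv(\bftau-\IE\bftau)=(I-\Pi_{RM})\bdiv\bftau$, and since $RM(E)\supset\left[\P_0(E)\right]^2$ and $\Pi_{RM}$ is an $L^2$-projection, a Bramble--Hilbert/Poincar\'e estimate on the star-shaped element (assumption $\mathbf{(A2)}$) gives the claim. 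The $L^2$ bound is also set up along the right lines: your verification that $\left[\P_0(E)\right]^{2\times 2}_s\subset\Sigma_h(E)$ and that the moments against $R_*(\partial E)$ are unisolvent (so that $\IE\bfpi_0=\bfpi_0$) is sound, and the reduction via $\bftau-\IE\bftau=(I-\IE)(\bftau-\bfpi_0)$ to an $L^2$-stability bound for $\IE$ is exactly the standard route.

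Where your argument is genuinely thin is the justification of the key stability ingredient, namely the estimate $\|\bftau_h\|_{0,E}\lesssim h_E\|\bdiv\bftau_h\|_{0,E}+h_E^{1/2}\|\bftau_h\,\bbn\|_{0,\partial E}$ for $\bftau_h\in\Sigma_h(E)$. You propose to obtain it by rescaling to the inscribed ball together with a finite-dimensional norm-equivalence argument, but this clashes with the obstruction you yourself identified one sentence earlier: the functions of $\Sigma_h(E)$ are virtual, and norm-equivalence constants on a finite-dimensional space depend on the space itself, hence on the shape of $E$; assumptions $\mathbf{(A1)}$--$\mathbf{(A2)}$ do not reduce the admissible shapes to a single reference configuration, so this route does not deliver shape-uniform constants without a delicate compactness argument that you do not supply. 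The way this estimate is actually proved (Lemma 5.1 of Ref.~\refcite{ARTIOLI2017155}, the same lemma the present paper invokes to obtain \eqref{eq:boundaryScalingArguments}) is an energy argument that never uses finite-dimensionality of the virtual space: one writes $\bftau_h=\C\teps(\bbw^\ast)$ with $\bbw^\ast$ chosen $L^2$-orthogonal to $RM(E)$, integrates $\int_E\D\bftau_h:\bftau_h$ by parts, and controls $\bbw^\ast$ through Korn, Poincar\'e and scaled trace inequalities on star-shaped domains. With that lemma in hand, the finite-dimensional norm equivalences you still need live only on the edge spaces $R(e)$ and $R_*(e)$ (genuine polynomial spaces on intervals, where they are harmless), and your reduction closes exactly as you describe. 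In short: right skeleton, correct divergence estimate, but the crucial inverse-type lemma requires the energy/Korn argument rather than the reference-element mechanism you sketch.
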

%
\subsection{A superconvergence result}
Henceforth, we will suppose $\Omega$ to be a convex polyogn (or a domain sufficiently regular for the application of the shift theorem); moreover, in Problem~\eqref{problem:cont-strong} we consider $\bbg=\bfzero$.
Our aim is to prove that the $L^2$-projection of $\bbu$ onto the rigid body motion,
\begin{equation}\label{eq:proj_rm}
\bar{\bbu}_h=\Pi_{RM}\bbu,
\end{equation}
superconverges to $\bbu$.
\begin{thm}\label{theorem:superconvergence}
	Let $(\bfsigma,\bbu)\in\Sigma\times U$ be the solution of Problem~\eqref{problem:cont-strong}, and let $(\bfsigma_h,\bbu_h)\in\Sigma_h\times U_h$ be the solution of the discrete Problem~\eqref{problem:ourVEM}. Then, assuming that the solution is sufficiently regular and that the mesh assumptions  $\mathbf{(A1)}$ and $\mathbf{(A2)}$ are satisfied, the following estimate holds true:
	\begin{equation}\label{eq:superconvergence}
	||\bar{\bbu}_h-\bbu_h||_0\lesssim h^2.
	\end{equation}
\end{thm}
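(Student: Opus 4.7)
The plan is to deploy an Aubin--Nitsche duality argument adapted to the VEM setting. I introduce the dual problem $-\bdiv \bfsigma^{*} = \bar\bbu_h - \bbu_h$, $\bfsigma^{*} = \C\,\teps(\bbu^{*})$, with $\bbu^{*} = \bfzero$ on $\partial\Omega$; convexity of $\Omega$ provides the shift $\|\bfsigma^{*}\|_1 + \|\bbu^{*}\|_2 \lesssim \|\bar\bbu_h - \bbu_h\|_0$. Writing $\bbe := \bar\bbu_h - \bbu_h \in U_h$ and using the commuting property~\eqref{eq:commutingDiagram},
\begin{equation*}
  \|\bbe\|_0^2 \;=\; -(\bdiv\bfsigma^{*}, \bbe) \;=\; -(\Pi_{RM}\bdiv\bfsigma^{*}, \bbe) \;=\; -b(\mathcal{I}_h\bfsigma^{*}, \bbe).
\end{equation*}
Decomposing $\bbe = (\bar\bbu_h - \bbu) + (\bbu - \bbu_h)$, the first summand is annihilated by $L^{2}$-orthogonality between $(I - \Pi_{RM})\bbu$ and $RM(E)$; subtracting the momentum equations of~\eqref{problem:cont-weak} and~\eqref{HybridProblem} tested on $\mathcal{I}_h\bfsigma^{*} \in \Sigma_h \subset \tilde\Sigma_h(\Th)$ (recall $\bbg = \bfzero$) then yields the core identity
\begin{equation*}
  \|\bbe\|_0^2 \;=\; a(\bfsigma, \mathcal{I}_h\bfsigma^{*}) \;-\; a_h(\bfsigma_h, \mathcal{I}_h\bfsigma^{*}).
\end{equation*}

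I split the right-hand side by adding and subtracting $a_h(\mathcal{I}_h\bfsigma, \mathcal{I}_h\bfsigma^{*})$ into $I_1 := a(\bfsigma - \mathcal{I}_h\bfsigma, \mathcal{I}_h\bfsigma^{*}) + [a - a_h](\mathcal{I}_h\bfsigma, \mathcal{I}_h\bfsigma^{*})$ and $I_2 := a_h(\mathcal{I}_h\bfsigma - \bfsigma_h, \mathcal{I}_h\bfsigma^{*})$. For $I_1$, a further $\pm\bfsigma^{*}$ decomposition splits $a(\bfsigma - \mathcal{I}_h\bfsigma, \mathcal{I}_h\bfsigma^{*})$ into $a(\bfsigma - \mathcal{I}_h\bfsigma, \bfsigma^{*})$ and $a(\bfsigma - \mathcal{I}_h\bfsigma, \mathcal{I}_h\bfsigma^{*} - \bfsigma^{*})$: the latter is $O(h^{2}\|\bfsigma\|_1 \|\bbe\|_0)$ by applying Proposition~\ref{prop:h_error_estimates_interpolations} to both factors, while the former is treated by elementwise integration by parts, using $\D\bfsigma^{*} = \teps(\bbu^{*})$, the $H(\bdiv)$-conformity of $\bfsigma - \mathcal{I}_h\bfsigma$ (interior jumps cancel) and $\bbu^{*}|_{\partial\Omega} = \bfzero$; the commuting diagram then gives $\bdiv(\bfsigma - \mathcal{I}_h\bfsigma) = -(I - \Pi_{RM})\bbf$, and the double orthogonality
\begin{equation*}
  \int_E (I - \Pi_{RM})\bbf \cdot \bbu^{*} \;=\; \int_E (I - \Pi_{RM})\bbf \cdot (I - \Pi_{RM})\bbu^{*}
\end{equation*}
delivers the missing factor of $h$ via Bramble--Hilbert on both entries (exploiting $\bbf \in [H^{1}(\Omega)]^{2}$, which falls under the regularity hypothesis). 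The residual $[a - a_h](\mathcal{I}_h\bfsigma, \mathcal{I}_h\bfsigma^{*})$ is the familiar VEM consistency gap: by polynomial consistency of $a_h$ it reduces to $[s_E - a_E]((I - \Pi_E)\mathcal{I}_h\bfsigma, (I - \Pi_E)\mathcal{I}_h\bfsigma^{*})$, each factor of $L^{2}$-size $O(h)$ by stability of $s_E$ and Proposition~\ref{prop:h_error_estimates_interpolations}.

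The treatment of $I_2$ hinges on the key identity $\bdiv(\mathcal{I}_h\bfsigma - \bfsigma_h) = \bfzero$: the commuting diagram gives $\bdiv\mathcal{I}_h\bfsigma = -\Pi_{RM}\bbf$, while the second equation of~\eqref{HybridProblem}, tested against $U_h$, together with $\bdiv\bfsigma_h \in U_h$, forces $\bdiv\bfsigma_h = -\Pi_{RM}\bbf$ as well. Setting $\bfdelta_h := \mathcal{I}_h\bfsigma - \bfsigma_h$ and writing $\mathcal{I}_h\bfsigma^{*} = \Pi_E\mathcal{I}_h\bfsigma^{*} + (I - \Pi_E)\mathcal{I}_h\bfsigma^{*}$, polynomial consistency turns $a_h(\bfdelta_h, \Pi_E\mathcal{I}_h\bfsigma^{*})$ into $a(\bfdelta_h, \Pi_E\mathcal{I}_h\bfsigma^{*})$. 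Inserting $\pm\bfsigma^{*}$, the leading piece $a(\bfdelta_h, \bfsigma^{*})$ vanishes exactly by elementwise integration by parts (using $\bdiv\bfdelta_h = \bfzero$, the normal continuity of $\bfdelta_h$ and $\bbu^{*}|_{\partial\Omega} = \bfzero$), the correction $a(\bfdelta_h, \Pi_E\mathcal{I}_h\bfsigma^{*} - \bfsigma^{*})$ is bounded by $\|\bfdelta_h\|_0 \cdot O(h|\bfsigma^{*}|_1) = O(h^{2}\|\bbe\|_0)$ using Theorem~\ref{theorem:standardConvergence} and Proposition~\ref{prop:h_error_estimates_interpolations}, while the stabilization contribution $s_E((I - \Pi_E)\bfdelta_h, (I - \Pi_E)\mathcal{I}_h\bfsigma^{*})$ is controlled analogously.

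The main obstacle is to extract the extra factor of $h$ beyond the $O(h) \cdot O(h)$ that a blunt bound on $a_h(\mathcal{I}_h\bfsigma - \bfsigma_h, \mathcal{I}_h\bfsigma^{*})$ would suggest; this is unlocked precisely by the divergence-free property of $\bfdelta_h$, which annihilates the leading term through integration by parts, and by the orthogonality trick on $(I - \Pi_{RM})\bbf$, which asks for one additional derivative on the load. Disentangling these geometric cancellations from the VEM consistency perturbation (projection $\Pi_E$ and stabilization $s_E$) is the principal technical subtlety compared with the FEM analysis of Ref.~\refcite{ArnoldBrezzi_1985}.
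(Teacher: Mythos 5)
Your proof is correct, and it shares the paper's skeleton: the elasticity dual problem, the commuting-diagram interpolant $\mathcal{I}_h$ from \eqref{eq:commutingDiagram}, the elimination of $\bar{\bbu}_h-\bbu$ by $RM$-orthogonality, and the error representation $\|\bar{\bbu}_h-\bbu_h\|_0^2=\pm\bigl(a_h(\bfsigma_h,\mathcal{I}_h\bfsigma^*)-a(\bfsigma,\mathcal{I}_h\bfsigma^*)\bigr)$. From that point on, however, your decomposition is genuinely different. The paper splits around $\bfsigma_h$ into $T_1=\sum_E a_E\bigl((\Pi_E-I)\bfsigma_h,\mathcal{I}_E\bfxi\bigr)$, $T_2=a(\bfsigma_h-\bfsigma,\mathcal{I}_h\bfxi)$ and the stabilization term $T_3$; the extra power of $h$ in $T_2$ comes from the orthogonality $(\bdiv(\bfsigma-\bfsigma_h),\bbq_h)=0$ for all $\bbq_h\in U_h$, which lets one replace the dual displacement $\bfphi$ by $\bfphi-\Pi_{RM}\bfphi$, and everything else is absorbed into the first-order bound \eqref{theorem:standardConvergence} together with $\|\bdiv\bfsigma_h\|_0\lesssim 1$. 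You instead pivot on $\mathcal{I}_h\bfsigma$, and your two cancellations are more structural: $\bdiv(\mathcal{I}_h\bfsigma-\bfsigma_h)=\bfzero$ (both divergences equal $-\Pi_{RM}\bbf$), so that $a(\mathcal{I}_h\bfsigma-\bfsigma_h,\bfsigma^*)$ vanishes \emph{exactly} by elementwise integration by parts, normal continuity and the homogeneous boundary condition of the dual problem; and the double orthogonality $\int_E(I-\Pi_{RM})\bbf\cdot(I-\Pi_{RM})\bbu^*$, which extracts the second factor of $h$ from $a(\bfsigma-\mathcal{I}_h\bfsigma,\bfsigma^*)$. What the paper's route buys is economy: it never introduces the consistency-gap term $[a-a_h](\mathcal{I}_h\bfsigma,\mathcal{I}_h\bfsigma^*)$ that you must estimate separately, and it hides the data regularity inside \eqref{theorem:standardConvergence}. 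What your route buys is transparency: the divergence-free structure of the discrete error is exhibited, and the role of $\bbf\in\left[H^1(\O)\right]^2$ is made explicit (this is not an extra hypothesis in disguise, since $\|\bdiv(\bfsigma-\bfsigma_h)\|_0=\|(I-\Pi_{RM})\bbf\|_0$, so the paper's use of $\|\bfsigma-\bfsigma_h\|_\Sigma\lesssim h$ already presumes comparable regularity).

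Two points to tighten. First, the consistency identity should read $[a_E-a_E^h](\bftau,\bfchi)=[a_E-s_E]\bigl((I-\Pi_E)\bftau,(I-\Pi_E)\bfchi\bigr)$, not $[s_E-a_E]$ — harmless, since only absolute values enter your bounds. Second, your repeated appeal to ``stability of $s_E$'' needs a concrete tool: since $s_E$ is a weighted boundary integral, bounding it by element $L^2$-quantities requires the inverse-type estimate \eqref{eq:estimatesForStability}, namely $h_E^{1/2}\|\bftau_h\bbn\|_{0,\partial E}\lesssim\|\bftau_h\|_{0,E}+h_E\|\bdiv\bftau_h\|_{0,E}$ for $\bftau_h\in\Sigma_h(E)$, applied with $\bftau_h=(I-\Pi_E)\mathcal{I}_E\bfsigma$, $(I-\Pi_E)\bfdelta_h$, etc.; this is legitimate because constant symmetric tensors belong to $\Sigma_h(E)$, so $(I-\Pi_E)$ maps $\Sigma_h(E)$ into itself, and the divergence terms it produces are controlled by $\bdiv\mathcal{I}_E\bfsigma=\Pi_{RM}\bdiv\bfsigma$ and $\bdiv\bfdelta_h=\bfzero$. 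With these two clarifications, your argument closes exactly as the paper's does, by dividing out $\|\bar{\bbu}_h-\bbu_h\|_0$ after invoking the shift estimate.
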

\begin{proof}
	Let $\bfphi\in\left[H^2(\O)\right]^2\cap \left[H^1_0(\O)\right]^2$ be the solution of the linear elasticity problem:
	\begin{equation}~\label{problem:primal}
	\left\{
	\begin{aligned}
	&\bdiv(\C \bfvarepsilon(\bfphi))=\bar{\bbu}_h-\bbu_h\qquad &\text{in } \O\\
	&\bfphi =\bfzero \qquad &\text{on } \partial \O.
	\end{aligned}
	\right.
	\end{equation}
	Due to standard regularity results ($\O$ is supposed to be convex), we have  
	\begin{equation}~\label{eq:estimate1}
	||\bfphi||_2 \lesssim ||\bar{\bbu}_h-\bbu_h||_0.
	\end{equation}
Set $\bfxi = \C \bfvarepsilon(\bfphi)$, and let $\mathcal{I}_h\,\bfxi$ be the interpolation of $\bfxi$ defined in~\eqref{eq:h_interpolationOperator}. Using~\eqref{eq:commutingDiagram}, \eqref{problem:primal} and recalling that $\left( \bar{\bbu}_h-\bbu_h\right)_{|E}\in RM(E)$, we get
	\begin{equation}~\label{eq:divInterpolateDelta}
	\bdiv (\mathcal{I}_h\,\bfxi) := \Pi_{RM}\left(\bdiv \bfxi\right)=\Pi_{RM}\left( \bar{\bbu}_h-\bbu_h\right) = \bar{\bbu}_h-\bbu_h.
	\end{equation}
	Therefore, using \eqref{eq:proj_rm} and the definition of the $L^2$-projection on rigid body motion, we have 
	\begin{equation}~\label{eq:ChainOfEquality1}
	\begin{aligned}
	||\bar{\bbu}_h-\bbu_h||_0^2
	& =  \int_{\O} \left(\bar{\bbu}_h - \bbu_h \right) \cdot \left(\bar{\bbu}_h - \bbu_h \right)~\dO 
	= \int_{\O} \bdiv (\mathcal{I}_h\,\bfxi) \cdot \left(\bar{\bbu}_h - \bbu_h \right)~\dO \\
	& = \int_{\O} \bdiv (\mathcal{I}_h\,\bfxi) \cdot \left( \bbu - \bbu_h \right)~\dO.
	\end{aligned}
	\end{equation}
	From~\eqref{problem:cont-strong}, \eqref{problem:ourVEM} and \eqref{eq:ChainOfEquality1} we infer
	\begin{equation}~\label{eq:h_usingcontinuousproblem}
	||\bar{\bbu}_h-\bbu_h||_0^2=\int_{\O} \bdiv (\mathcal{I}_h\,\bfxi) \cdot \left( \bbu - \bbu_h \right)~\dO =a_h(\bfsigma_h,\mathcal{I}_h\bfxi)-a(\bfsigma,\mathcal{I}_h\bfxi).
	\end{equation}
	Now, employing the definition of the projection operator $\Pi_E$ (cf.~\eqref{eq:h_proj}), we get
	\begin{equation}~\label{eq:T1+T2+T3}
	\begin{aligned}
	a_h&(\bfsigma_h,\mathcal{I}_h\bfxi)-a(\bfsigma,\mathcal{I}_h\bfxi)
	\\&=\sum_{E\in\Th} \left[a_E^h(\bfsigma_h, \mathcal{I}_E\bfxi)-a_E(\bfsigma,  \mathcal{I}_E\bfxi)\right]
	\\&=\sum_{E\in\Th} \left[a_E(\Pi_E\bfsigma_h, \Pi_E(\mathcal{I}_E\bfxi))-a_E(\bfsigma,  \mathcal{I}_E\bfxi)+ s_E\left( (I-\Pi_E)\bfsigma_h, (I-\Pi_E)\mathcal{I}_E\bfxi \right)\right]
	\\&=\sum_{E\in\Th} \left[a_E\left(\Pi_E\bfsigma_h,\mathcal{I}_E\bfxi\right)-a_E(\bfsigma,  \mathcal{I}_E\bfxi)+s_E\left( (I-\Pi_E)\bfsigma_h, (I-\Pi_E)\mathcal{I}_E\bfxi \right)\right]
	\\&=\sum_{E\in\Th} [a_E\left((\Pi_E-I)\bfsigma_h,\mathcal{I}_E\bfxi\right)+a_E(\bfsigma_h-\bfsigma, \mathcal{I}_E\bfxi)+s_E\left( (I-\Pi_E)\bfsigma_h, (I-\Pi_E)\mathcal{I}_E\bfxi \right)]
	\\&=T_1+T_2+T_3.
	\end{aligned}
	\end{equation}
	
	We  bound  the  three  terms $T_1$, $T_2$ and $T_3$ in~\eqref{eq:T1+T2+T3} separately. 
	
	To estimate the term $T_1$, we first write:
	\begin{equation}
	\begin{aligned}
	T_1:&= \sum_{E\in\Th}a_E\left((\Pi_E-I)\bfsigma_h,\mathcal{I}_E\bfxi\right)	
	\\&=\sum_{E\in\Th}\left[a_E\left((\Pi_E-I)(\bfsigma_h- \bfsigma),\mathcal{I}_E\bfxi\right)+a_E\left((\Pi_E-I)\bfsigma,\mathcal{I}_E\bfxi\right) \right]
	\\&=\sum_{E\in\Th}a_E\left((\Pi_E-I)(\bfsigma_h- \bfsigma),\mathcal{I}_E\bfxi-\bfxi\right)+\sum_{E\in\Th}a_E\left((\Pi_E-I)\bfsigma,\mathcal{I}_E\bfxi-\bfxi\right)\\
	&+\sum_{E\in\Th}a_E\left((\Pi_E-I)(\bfsigma_h- \bfsigma),\bfxi-\Pi_E\bfxi\right)+\sum_{E\in\Th}a_E\left((\Pi_E-I)\bfsigma,\bfxi-\Pi_E\bfxi\right).
	\end{aligned}
	\end{equation}
	Now, by employing the continuity of $a_E(\cdot,\cdot)$, standard polynomial approximation results, Proposition~\ref{prop:h_error_estimates_interpolations} and estimate~\eqref{theorem:standardConvergence}, we have
	\begin{equation}~\label{eq:estimate_T_1}
	\begin{aligned}
	T_1&\lesssim  \left(||\bfsigma_h-\bfsigma||_{0}+||\bfsigma - \Pi_h \bfsigma ||_{0}\right)\left(||\mathcal{I}_h\bfxi-\bfxi||_{0}+||\bfxi-\Pi_h\bfxi||_{0}\right)
	\\&\lesssim h\left(||\mathcal{I}_h\bfxi-\bfxi||_{0}+||\bfxi-\Pi_h\bfxi||_{0}\right)\lesssim h^2 |\bfxi|_1
	\\& \lesssim h^2||\bfvarphi||_2,
	\end{aligned}
	\end{equation}
	where $\Pi_h$ is the operator that locally coincides with $\Pi_E$, for every $E\in\Th$.
	
	To  estimate the term $T_2$, we recall that $\bfxi : = \C \teps(\bfvarphi)$ to write 
	\begin{equation*}
	\begin{aligned}
	T_2:=a(\bfsigma_h-\bfsigma,  \mathcal{I}_h\bfxi)
	&=\int_{\O} \D\left(\bfsigma_h-\bfsigma\right): \mathcal{I}_h\bfxi~\dO
	\\&= \int_{\O} \D\left(\bfsigma_h-\bfsigma\right): \left(\mathcal{I}_h\bfxi-\bfxi\right)~\dO + \int_{\O} \D\left(\bfsigma_h-\bfsigma\right): \bfxi~\dO\\
	& = \int_{\O} \D\left(\bfsigma_h-\bfsigma\right): \left(\mathcal{I}_h\bfxi-\bfxi\right)~\dO + \int_{\O} \D\left(\bfsigma_h-\bfsigma\right): \C\varepsilon(\bfphi)~\dO
	\\& = \int_{\O} \D\left(\bfsigma_h-\bfsigma\right): \left(\mathcal{I}_h\bfxi-\bfxi\right)~\dO - \int_{\O} \bdiv\left(\bfsigma_h-\bfsigma\right)\cdot\bfphi~\dO,
	\end{aligned}
	\end{equation*}	
	where an integration by parts has been used in the last step.
	Now, we recall (see~\eqref{eq:commutingDiagram}) that 
	$$(\bdiv (\bfsigma_h-\bfsigma),\bbq_h)=0, \quad\forall\bbq_h\in U_h.$$
	Hence, taking $\bbq_h=\bar{\bfphi}_h :=\Pi_{RM}\bfphi$, 
	we obtain
	\begin{equation}~\label{eq:estimate2}
	T_2= \int_{\O} \D\left(\bfsigma_h-\bfsigma\right): \left(\mathcal{I}_h\bfxi-\bfxi\right)~\dO - \int_{\O} \bdiv\left(\bfsigma_h-\bfsigma\right)\cdot (\bfphi-\bar{\bfphi}_h)~\dO.
	\end{equation}
	Employing Proposition~\ref{prop:h_error_estimates_interpolations} and~\eqref{theorem:standardConvergence} we have 
	\begin{equation}~\label{eq:estimate_T_2}
	\begin{aligned}
	T_2&\lesssim ||\bfsigma_h-\bfsigma||_{\Sigma}(||\bfxi-\mathcal{I}_h\,\bfxi||_0+||\bfphi - \bar{\bfphi}_h||_0)
	\\ &\lesssim h (||\bfxi-\mathcal{I}_h\,\bfxi||_0+||\bfphi - \bar{\bfphi}_h||_0)\lesssim h^2 \left( |\bfxi|_1 + |\bfvarphi|_1\right)
	\lesssim h^2||\bfphi||_2.
	\end{aligned}
	\end{equation}
	Concerning the term $T_3$, it holds:
	\begin{equation}
	\begin{aligned}~\label{eq:TS_first}
	T_3&=\sum_{E\in\Th} s_E\left( (I-\Pi_E)\bfsigma_h, (I-\Pi_E)\mathcal{I}_E\bfxi \right)
	\\&=\sum_{E\in\Th} \kappa_E h_E \int_{\partial E} \left[(I-\Pi_E)\bfsigma_h\bbn\right] \left[(I-\Pi_E)(\mathcal{I}_E\bfxi)\bbn\right]~\ds
	\\&\lesssim \sum_{E\in\Th}h_E^{1/2}||(I-\Pi_E)\bfsigma_h\bbn||_{0,\partial E}\, h_E^{1/2} ||(I-\Pi_E)(\mathcal{I}_E\bfxi)\bbn||_{0,\partial E}.
	\end{aligned}
	\end{equation}
	Under assumption $\mathbf{(A1)}$ and $\mathbf{(A2)}$, using the same technique developed in Ref. \refcite{ARTIOLI2017155,BLRXX}, we have that
	\begin{equation}~\label{eq:estimatesForStability}
	h_E^{1/2}||\bftau_h\bbn||_{0,\partial E}\lesssim ||\bftau_h\bbn||_{-1/2,\partial E}\lesssim ||\bftau_h||_{0,E}+ h_E||\bdiv \bftau_h||_{0,E} \qquad \forall \bftau_h\in\Sigma_h(E).
	\end{equation}
	From~\eqref{eq:TS_first} and~\eqref{eq:estimatesForStability} we then deduce
	\begin{equation}~\label{eq:estimateForStability1}
	\begin{aligned}
	T_3\lesssim &\left(\sum_{E\in\Th}\left[||(I-\Pi_E)\bfsigma_h||^2_{0,E}+ h_E^2||\bdiv \bftau_h||^2_{0,E}\right] \right)^{1/2}\\ &\left(\sum_{E\in\Th}\left[||(I-\Pi_E)\mathcal{I}_E\bfxi||^2_{0,E}+ h_E^2||\bdiv (\mathcal{I}_E\bfxi)||^2_{0,E}\right]\right)^{1/2}.
	\end{aligned}
	\end{equation}
	It holds
	\begin{equation}~\label{eq:estimateForStability2}
	\begin{aligned}
	||(I-\Pi_E)\bfsigma_h||_{0,E}^2&=||(\bfsigma_h -\bfsigma) +(\bfsigma - \Pi_E\bfsigma_h)||_{0,E}^2
	\\&\lesssim||\bfsigma_h -\bfsigma||_{0,E}^2 +||\bfsigma - \Pi_E\bfsigma_h||_{0,E}^2
	\end{aligned}
	\end{equation}
	and 
	\begin{equation}~\label{eq:estimateForStability3}
	\begin{aligned}
	||(I-\Pi_E)\mathcal{I}_E\bfxi||_{0,E}^2&=||(\mathcal{I}_E\bfxi - \bfxi) + ( \bfxi -  \Pi_E( \mathcal{I}_E\bfxi) )||_{0,E}^2 
	\\& = ||(\mathcal{I}_E\bfxi - \bfxi) + ( \bfxi - \Pi_E\bfxi ) +\Pi_E(\bfxi - \mathcal{I}_E\bfxi )||_{0,E}^2
	\\&\lesssim 
	|| \mathcal{I}_E\bfxi - \bfxi ||_{0,E}^2 + || \bfxi - \Pi_E\bfxi ||_{0,E}^2 +
	|| \Pi_E(\bfxi - \mathcal{I}_E\bfxi ) ||_{0,E}^2.
	\end{aligned}
	\end{equation}
	Therefore, we use~\eqref{eq:estimateForStability2}, \eqref{eq:estimateForStability3}, the continuity of $\Pi_h$, Proposition~\ref{prop:h_error_estimates_interpolations} and~\eqref{theorem:standardConvergence}, to get
	\begin{equation}~\label{eq:estimate_T_3}
	\begin{aligned}
	T_3 &\lesssim \left( ||\bfsigma-\bfsigma_h||_{0}+||\bfsigma-\Pi_h\bfsigma||_{0}+h||\bdiv \bfsigma_h||_{0}\right)\cdot\\&\qquad\qquad\  \left(||\bfxi-\mathcal{I}_h\bfxi||_{0}+||\bfxi-\Pi_h\bfxi||_{0}+h||\bdiv(\mathcal{I}_h\bfxi)||_{0}\right)
	\\&\lesssim h^2 |\bfxi|_1 \lesssim h^2||\bfvarphi||_2.
	\end{aligned}	
	\end{equation}
	Above, we have also used the estimate $||\bdiv \bfsigma_h||_0\lesssim 1$.
	Now estimate~\eqref{eq:superconvergence} follows from~\eqref{eq:estimate1},~\eqref{eq:ChainOfEquality1},~\eqref{eq:T1+T2+T3},~\eqref{eq:estimate_T_1},~\eqref{eq:estimate_T_2} and~\eqref{eq:estimate_T_3}.
\end{proof}

\subsection{Error estimate for the Lagrangre multipliers}
The next result gives some information about the convergence of the Lagrange multipliers. To this end we introduce the following two norms on $\Lambda_h(\mathcal{E}_h^I)$:
\begin{align}
|\bfmu_h|^2_{0,h}&=\sum_{e\in\Eh^I}||\bfmu_h||^2_{0,e} ~\label{eq:boundaryNormL2}\\
|\bfmu_h|^2_{-1/2,h}&= \sum_{e\in\Eh^I}h_e\, ||\bfmu_h||^2_{0,e}.~\label{eq:boundarySeminorm}
\end{align}
We also need to define the $L^2$-projection operator $$\Pi_{0}^{\partial}: \left[L^2(\Eh^I)\right]^2 \rightarrow\left[\P_0(\Eh^I)\right]^2\subseteq\Lambda_h(\mathcal{E}_h^I),$$ such that 
\begin{equation}
\int_e \Pi_{0}^{\partial}\bbu\cdot \bbp~\ds =\int_e \bbu \cdot \bbp~\ds \quad \forall \bbp\in\left[\P_0(e)\right]^2, \, \forall e\in\Eh^I.
\end{equation}
%
%
%
%
%
\begin{thm}~\label{theorem: boundaryEstimateL2}	For every element $E\in\Th$ and edge $e\in\partial E\cap \Eh^I$, if $\left\{ \Th \right\}_h$ is regular, it holds 
	\begin{equation}~\label{eq:boundaryEstimateL2}
	||\Pi^{\partial}_{0}(\bflambda_h-\bbu)||_{0,e}\lesssim h_E^{1/2}||\bfsigma-\bfsigma_h||_{0,E} +h_E^{-1/2}||\bar{\bbu}_h-\bbu_h||_{0,E},
	\end{equation}
where $\bar{\bbu}_h :=\Pi_{RM}\bbu$, see \eqref{eq:proj_rm}.
\end{thm}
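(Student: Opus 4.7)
The plan is a duality-type argument: I will construct an edge-localized test stress $\bftau_h^*$, supported on the single element $E$, whose normal trace equals $\bbzeta_e := \Pi_0^\partial(\bflambda_h-\bbu)|_e$ on $e$ and vanishes on every other edge of $\partial E$. This construction is legal because $\bbzeta_e \in [\P_0(e)]^2 \subset R(e)$, and because within $\Sigma_h(E)$ the normal traces can be prescribed independently on each edge (the total count matches $\dim \Sigma_h(E) = 3 n_e^E$). Extending by zero outside $E$ gives an element of $\tilde\Sigma_h(\Th)$.

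The first step is to derive a skeleton-level error identity. Starting from element-wise integration by parts of $a(\bfsigma,\bftau_h)$ (using $\bfsigma = \C\teps(\bbu)$ and $\bbu|_{\partial\Omega}=\bfzero$) and subtracting the first equation of the hybrid formulation \eqref{HybridProblem} with $\bbg=\bfzero$, I obtain, for every $\bftau_h \in \tilde\Sigma_h(\Th)$,
$$\sum_{E\in\Th}\int_{\partial E \cap \Eh^I}(\bbu-\bflambda_h)\cdot\bftau_h\bbn\,ds = [a(\bfsigma,\bftau_h)-a_h(\bfsigma_h,\bftau_h)] + b(\bftau_h,\bbu-\bbu_h).$$
Testing with $\bftau_h^*$ collapses the left-hand side to $\int_e (\bbu-\bflambda_h)\cdot\bbzeta_e\,ds = -\|\bbzeta_e\|_{0,e}^2$ thanks to the $L^2$-orthogonality property of $\Pi_0^\partial$ and the constancy of $\bbzeta_e$ on $e$. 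I am then left to bound two right-hand terms, each of which must scale like $\|\bbzeta_e\|_{0,e}$ times the corresponding factor in \eqref{eq:boundaryEstimateL2}.

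For the $b$-term, I will use $\bdiv\bftau_h^* \in RM(E)$ together with the $L^2(E)$-orthogonality of $\bbu-\bar{\bbu}_h$ to $RM(E)$ to rewrite $b(\bftau_h^*,\bbu-\bbu_h)$ as $\int_E \bdiv\bftau_h^*\cdot(\bbu_h-\bar\bbu_h)\,dE$, and then estimate the divergence by picking $\bbr = \bdiv\bftau_h^*$ in the identity $\int_E \bdiv\bftau_h^*\cdot\bbr\,dE = \int_e \bbzeta_e \cdot \bbr\,ds$ valid for $\bbr \in RM(E)$ (since $\teps(\bbr)=\bfzero$). Combined with the polynomial inverse trace inequality $\|\bbr\|_{0,e} \lesssim h_E^{-1/2}\|\bbr\|_{0,E}$, this produces $\|\bdiv\bftau_h^*\|_{0,E} \lesssim h_E^{-1/2}\|\bbzeta_e\|_{0,e}$ and hence a contribution of the form $h_E^{-1/2}\|\bbzeta_e\|_{0,e}\|\bar\bbu_h-\bbu_h\|_{0,E}$, which accounts for the second term in \eqref{eq:boundaryEstimateL2}. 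For the consistency term, standard VEM bookkeeping (splitting via $\Pi_E$ and the stabilization $s_E$, using symmetry $a_E(\bfsigma_h,(I-\Pi_E)\bftau_h^*) = a_E((I-\Pi_E)\bfsigma_h,(I-\Pi_E)\bftau_h^*)$, Proposition~\ref{prop:h_error_estimates_interpolations} and estimate \eqref{theorem:standardConvergence}) will deliver a bound of the form $\|\bfsigma-\bfsigma_h\|_{0,E}\cdot\|\bftau_h^*\|_{0,E}$ modulo higher-order remainders.

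The crux of the argument is therefore the inverse stability bound $\|\bftau_h^*\|_{0,E} \lesssim h_E^{1/2}\|\bbzeta_e\|_{0,e}$, which is \emph{not} immediate from integration by parts and is the main obstacle. Unlike the divergence estimate, the $L^2(E)$-norm of a virtual function is not controlled by its normal trace through a direct IBP trick. It rests on a Poincaré-type property of the virtual space $\Sigma_h(E)$ (reconstructing a symmetric elastic stress from its boundary normal trace), which for the scheme of Ref.~\refcite{ARTIOLI2017155} can be established via scaling and reference-element arguments under assumptions $\mathbf{(A1)}$--$\mathbf{(A2)}$. Once this scaling is in hand, I substitute $\|\bftau_h^*\|_{0,E} \lesssim h_E^{1/2}\|\bbzeta_e\|_{0,e}$ into the consistency bound, combine with the $b$-term estimate, and divide by $\|\bbzeta_e\|_{0,e}$ to conclude \eqref{eq:boundaryEstimateL2}.
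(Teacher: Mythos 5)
Your proposal is correct and is essentially the paper's own argument: the same single-edge test stress $\tilde{\bftau}_h\in\Sigma_h(E)$ with $\tilde{\bftau}_h\bbn = \Pi^{\partial}_{0}(\bflambda_h-\bbu)$ on $e$ and $\bfzero$ on the remaining edges, the same identity obtained by pairing the localized discrete equation of \eqref{HybridProblem} with the integrated-by-parts constitutive law, the same divergence bound $\|\bdiv\tilde{\bftau}_h\|_{0,E}\lesssim h_E^{-1/2}\|\Pi^{\partial}_{0}(\bflambda_h-\bbu)\|_{0,e}$ via the rigid-body-motion/integration-by-parts trick plus a polynomial inverse estimate, and the same reliance on an external scaling result (Lemma 5.1 of Ref.~\refcite{ARTIOLI2017155} in the paper) for the crux bound $\|\tilde{\bftau}_h\|_{0,E}\lesssim h_E^{1/2}\|\Pi^{\partial}_{0}(\bflambda_h-\bbu)\|_{0,e}$, which you correctly flag as the one step that cannot be obtained by integration by parts. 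The only differences are cosmetic or in bookkeeping: you write the error identity globally over the skeleton before localizing, and you carry the stabilization part of $a_h$ explicitly through the consistency term, a discrepancy that the paper's displayed identity \eqref{eq:integral1} silently suppresses by writing $\int_E \D\bfsigma_h:\tilde{\bftau}_h$ in place of $a_E^h(\bfsigma_h,\tilde{\bftau}_h)$.
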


\begin{proof}
%
	%
	Given an element $E\in\Th$, we fix an edge $e\in\partial E\cap \Eh^I$. 
	Using the unisolvence of the degrees of freedom of $\Sigma_{h}(E)$, we infer that there exists a unique function $\tilde{\bftau}_h\in\Sigma_{h}(E)$ such that 
	\begin{equation}\label{eq:defintionBoundaryEstimate}
	\left\{
	\begin{aligned}
	\tilde{\bftau}_h \bbn_{e} &=  \Pi^{\partial}_{0}\left(\bflambda_h-\bbu\right),	& \text{on } e\\
	\tilde{\bftau}_h \bbn_{\tilde{e}} &= \bfzero & \forall \tilde{e}\neq e.
	\end{aligned}
	\right.
	\end{equation}
	Then, recalling that $\bdiv \tilde{\bftau}_h\in RM(E)$, an integration by parts, equations \eqref{eq:defintionBoundaryEstimate} and an inverse estimate for polynomials, give
	\begin{equation}\label{eq:bound_sc1}
	\begin{aligned}
	||\bdiv \tilde{\bftau}_h  ||_{0,E}^2 &= \int_E \bdiv \tilde{\bftau}_h\cdot \bdiv \tilde{\bftau}_h \dEl = \int_{\partial E} \tilde{\bftau}_h\bbn\cdot \bdiv\tilde{\bftau}_h\ds\\
	& \le 
	|| \Pi^{\partial}_{0}\left(\bflambda_h-\bbu\right)||_{0,e} || \bdiv \tilde{\bftau}_h||_{0,e}\\
	& \lesssim || \Pi^{\partial}_{0}\left(\bflambda_h-\bbu\right)||_{0,e} h_E^{-1/2}|| \bdiv \tilde{\bftau}_h||_{0,E} .
	\end{aligned}
	\end{equation}
Hence, we get
\begin{equation}\label{eq:bound_sc2}
h_E ||\bdiv \tilde{\bftau}_h  ||_{0,E} \lesssim h_E^{1/2} || \Pi^{\partial}_{0}\left(\bflambda_h-\bbu\right)||_{0,e} .
\end{equation}
Using Lemma 5.1 of Ref. \refcite{ARTIOLI2017155}, from \eqref{eq:defintionBoundaryEstimate} and \eqref{eq:bound_sc2} we obtain
	\begin{equation}~\label{eq:boundaryScalingArguments}
	h_E ||\bdiv\tilde{\bftau}_h||_{0,E} + ||\tilde{\bftau}_h||_{0,E}\lesssim  h^{1/2}_E ||\Pi^{\partial}_{0}\left(\bflambda_h-\bbu\right)||_{0,e}.
	\end{equation}
	Now, in the first equation of~\eqref{HybridProblem} we take $\bftau_h\in\tilde{\Sigma}_{h}(\Th)$ such that
	\begin{equation}
	\bftau_h=\tilde{\bftau}_h \quad \text{in } E,\quad \text{and }\quad \bftau_h=\bfzero\quad \text{in } \O\setminus E,
	\end{equation}
	and using~\eqref{eq:defintionBoundaryEstimate} we have
	\begin{equation}~\label{eq:integral1}
	\int_E \D \bfsigma_h:\tilde{\bftau}_h~\dEl+\int_E\bbu_h\cdot\bdiv\tilde{\bftau}_h~\dEl-\int_e\bflambda_h\cdot\Pi^{\partial}_{0}(\bflambda_h-\bbu)~\ds=0.
	\end{equation}
	On the other hand, employing the constitutive law in~\eqref{problem:cont-strong} and the Green's formula, we infer	\begin{equation}~\label{eq:integral2}
	\int_E \D \bfsigma :\tilde{\bftau}_h~\dEl+	\int_E \bbu\cdot \bdiv\tilde{\bftau}_h~\dEl - \int_e \bbu\cdot\Pi^{\partial}_{0}(\bflambda_h-\bbu)~\ds=0.
	\end{equation}
	Using~\eqref{eq:integral1} and~\eqref{eq:integral2} and recalling the fact that $\bdiv \tilde{\bftau}_h\in RM(E)$ we get 
	\begin{equation}~\label{eq:integralFinal}
	\begin{aligned}
	||\Pi^{\partial}_{0}(\bflambda_h-\bbu)||_{0,e}^2&=\int_e\Pi^{\partial}_{0}(\bflambda_h-\bbu)\cdot\Pi^{\partial}_{0}(\bflambda_h-\bbu)~\ds \\
	&=\int_e(\bflambda_h-\bbu)\cdot\Pi^{\partial}_{0}(\bflambda_h-\bbu)~\ds \\	
	&=\int_{E}\D (\bfsigma_h-\bfsigma):\tilde{\bftau}_h~\dEl+\int_E(\bbu_h-\bbu)\cdot\bdiv\tilde{\bftau}_h~\dEl\\
	&=\int_{E}\D (\bfsigma_h-\bfsigma):\tilde{\bftau}_h~\dEl+\int_E(\bbu_h-\bar{\bbu}_h)\cdot\bdiv\tilde{\bftau}_h~\dEl.
	\end{aligned}
	\end{equation}
	Finally~\eqref{eq:integralFinal} and~\eqref{eq:boundaryScalingArguments} give~\eqref{eq:boundaryEstimateL2}.
\end{proof}
As a consequence of the Theorem above, we have the following corollary, whose proof is immediate (cf.~\eqref{eq:boundaryEstimateL2}, \eqref{theorem:standardConvergence} and~\eqref{boundaryCorollary1}).
\begin{cor}\label{coroll:multipliers}
	For each element $E\in\Th$ and for every edge $e\in\partial E$, we have
	\begin{equation}~\label{boundaryCorollary1}
	|\Pi^{\partial}_{0}(\bflambda_h-\bbu)|_{-1/2,h}\lesssim  h||\bfsigma-\bfsigma_h||_{0,\O} +||\bar{\bbu}_h-\bbu_h||_{0,\O}  
	\end{equation}
	and  
	\begin{equation}~\label{boundaryCorollary2}
	|\Pi^{\partial}_{0}(\bflambda_h-\bbu)|_{-1/2,h}\lesssim h^2.
	\end{equation}
\end{cor}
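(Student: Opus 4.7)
The plan is to reduce Corollary~\ref{coroll:multipliers} to Theorem~\ref{theorem: boundaryEstimateL2} by squaring the pointwise-edge estimate \eqref{eq:boundaryEstimateL2}, weighting by the edge length $h_e$, and summing over all interior edges. By mesh regularity $\mathbf{(A1)}$, for any edge $e\in\partial E\cap\Eh^I$ we have $h_e\sim h_E$, so squaring \eqref{eq:boundaryEstimateL2} and multiplying by $h_e$ gives
\[
h_e\,||\Pi^{\partial}_{0}(\bflambda_h-\bbu)||^2_{0,e}\lesssim h_E^{2}\,||\bfsigma-\bfsigma_h||^2_{0,E}+||\bar{\bbu}_h-\bbu_h||^2_{0,E}.
\]

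First I would sum this inequality over all edges $e\in\Eh^I$. Under $\mathbf{(A1)}$--$\mathbf{(A2)}$ the number of edges per element is uniformly bounded, so each element contribution on the right is counted only $O(1)$ times. Using $h_E\le h$ and the definition \eqref{eq:boundarySeminorm} of the $|\cdot|_{-1/2,h}$ seminorm, I obtain
\[
|\Pi^{\partial}_{0}(\bflambda_h-\bbu)|^2_{-1/2,h}\lesssim h^2\,||\bfsigma-\bfsigma_h||^2_{0,\O}+||\bar{\bbu}_h-\bbu_h||^2_{0,\O}.
\]
Taking square roots delivers \eqref{boundaryCorollary1}.

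To get \eqref{boundaryCorollary2} I would then plug in the already-established a-priori bounds: the standard estimate \eqref{theorem:standardConvergence} yields $||\bfsigma-\bfsigma_h||_{0,\O}\lesssim h$, while the superconvergence result \eqref{eq:superconvergence} of Theorem~\ref{theorem:superconvergence} yields $||\bar{\bbu}_h-\bbu_h||_{0,\O}\lesssim h^2$. Substituting both into \eqref{boundaryCorollary1} produces $h\cdot h+h^2\lesssim h^2$, which is precisely the second bound.

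I do not expect any serious obstacle here: the whole argument is a routine \emph{square--weight--sum} reduction from the edgewise bound of Theorem~\ref{theorem: boundaryEstimateL2} combined with the two convergence theorems established earlier. The only point requiring a little care is the comparison $h_e\sim h_E$ on an edge shared by two elements and the uniform bound on the number of edges per element, both guaranteed by the regularity hypotheses $\mathbf{(A1)}$ and $\mathbf{(A2)}$; this explains the author's remark that the proof of the corollary is immediate.
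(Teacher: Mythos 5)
Your proposal is correct and is precisely the argument the paper has in mind when it calls the proof ``immediate'': square the edgewise bound \eqref{eq:boundaryEstimateL2}, weight by $h_e\sim h_E$, sum using the uniformly bounded number of edges per element to get \eqref{boundaryCorollary1}, then insert \eqref{theorem:standardConvergence} and the superconvergence estimate \eqref{eq:superconvergence} to get \eqref{boundaryCorollary2}. No gaps; your explicit citation of Theorem~\ref{theorem:superconvergence} just makes visible a dependence the paper leaves implicit in its parenthetical reference.
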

\begin{remark}
The same results of Theorem \ref{theorem: boundaryEstimateL2} and Corollary \ref{coroll:multipliers} can be obtained replacing $\Pi_{0}^{\partial}$ with the  
$L^2$-projection operator $$\Pi_{R}^{\partial}: \left[L^2(\Eh^I)\right]^2 \rightarrow \Lambda_h(\mathcal{E}_h^I),$$ defined by (cf. \eqref{eq:h_space_Re})
\begin{equation}
\int_e \Pi_{R}^{\partial}\bbu\cdot \bbq~\ds =\int_e \bbu \cdot \bbq~\ds \quad \forall \bbq \in R(e)\ , \, \forall e\in\Eh^I .
\end{equation}

\end{remark}

\section{Post-processing}~\label{section:post-processing}	
In the present section, we introduce a post-processing procedure which leads to achieve a better approximation for the displacement field. More precisely, we will employ the Lagrange multipliers $\bflambda_h$ to construct a non-conforming VEM approximation $\bbu^*_h$ converging to $\bbu$ faster than $\bbu_h$. 

Let us start to present the non-conforming VEM spaces, see Ref.~\refcite{AyusoLipnikovManzini} for more details.
\subsection{Non-conforming Sobolev spaces}
Given $\{\Th\}_h$, a sequence of regular decomposition of $\O$, we define the broken $H^1$ space on $\Th$
as
\begin{equation}
H^1(\Th):= \prod_{E\in\Th} H^1(E)=\left\{v \in L^2(\O) : v_{|E}\in H^1(E) \right\}.
\end{equation}
Then, in particular
\begin{equation}\label{eq:brokenVectorSpace}
\left[H^1(\Th)\right]^2:=\prod_{E\in\Th} \left[ H^1(E)\right]^2
\end{equation}
is the space of vector-valued functions that, are locally in $\left[ H^1(E)\right]^2$.
For the vector space~\eqref{eq:brokenVectorSpace}, we introduce the corresponding broken seminorm and norm
\begin{equation}
|\bbv|^2_{1,\Th}:=\sum_{E \in \Th}||\nabla\bbv||^2_{0,E}, \qquad ||\bbv||^2_{1,\Th}:=\sum_{E \in \Th}||\bbv||^2_{1,E}.
\end{equation}

In order to define non-conforming Sobolev spaces associated with a polygonal decomposition, we need to fix some additional notation. Let $e$ be an edge in $\Eh^I$. Then, there are two adjacent elements $E^\pm$ which share the same edge $e$. We write $\bbn_{E^{+}}$, $\bbn_{E^{-}}$ for the exterior unit normal on $\partial E^{+}$ and $\partial E^{-}$, respectively. Then, for $\bbv\in \left[H^1(\Th)\right]^2$, we define the jump operator across an edge $e\in\Eh$ as 
\begin{equation}
\ljump \bbv \rjump:=\left\{
\begin{aligned}
&\bbv^{+}\otimes\bbn_{E^+}+\bbv^{-}\otimes\bbn_{E^-} \quad &\text{ on } e\in \Eh^I\\
&\bbv\otimes\bbn_{e}  & \text{ on }e\in \Eh^B,
\end{aligned}
\right.
\end{equation}
where $\otimes$ denotes the usual tensor product of vectors.
We now introduce the global non-conforming $H^1$ space as follows
\begin{equation}
H^{1,nc}_0(\Th):=\left\{\bbv \in \left[H^1(\Th)\right]^2 \,: \, \int_e\ljump \bbv \rjump~\ds =0 \quad  \forall e \in\Eh  \right\}.
\end{equation}
We remark that the seminorm $|\cdot|_{1,\Th}$ is a norm for functions in $H^{1,nc}_0(\Th)$ and that the following Poincar\'e inequality holds true (see Refs.~\refcite{AyusoLipnikovManzini,ncHVEM}):
\begin{equation}
||\bbv||_{0}\lesssim |\bbv|_{1,\Th} \quad \forall \bbv \in H^{1,nc}_0(\Th).
\end{equation}
\subsection{A low-order non-conforming Virtual Element Method}
We briefly recall the main features of the low-order non-conforming VEM studied in Refs.~\refcite{AyusoLipnikovManzini,ncHVEM}. Given a polygon $E\in\Th$, we define the local non-conforming virtual space as  
\begin{equation}~\label{eq:nonConformingSpace}
U_h^*(E):=\left\{ \bbv_h^*\in \left[H^1(E)\right]^{2} \, :\, \dfrac{\partial\bbv_h^*}{\partial \bbn} = \nabla\bbv_h^*\bbn  \in \left[\P_0(e)\right]^2\quad \forall e\in\partial E, \quad \Delta \bbv_h^* = \bfzero \right\}.
\end{equation}
Accordingly, for the local spaces $U_h^*(E)$, we can take the following degrees of freedom:
%
%
%
\begin{equation}~\label{eq:nonConformingDofs}
\bbv^*_h \rightarrow \frac{1}{|e|}\int_e \bbv^*_h~\ds.
\end{equation}
Therefore, we infer that the dimension of space~\eqref{eq:nonConformingSpace} is
\begin{equation}
\dim(U_h^*(E)) = 2n_e^E,
\end{equation}
where we recall that $n_e^E$ is the number of element edges.
The unisolvence of the degrees of freedom defined in~\eqref{eq:nonConformingDofs} is given by the following proposition, whose proof can be found in Ref.~\refcite{AyusoLipnikovManzini}.
\begin{prop}~\label{prop:h_unisolvenceDofsNC}
	Let $E$ be a simple polygon with $n_e^E$ edges, and let $U^*_h(E)$ be the space defined in~\eqref{eq:nonConformingSpace}. The degrees of freedom~\eqref{eq:nonConformingDofs} are unisolvent for $U^*_h(E)$.
\end{prop}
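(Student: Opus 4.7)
The plan is to reduce the statement to the scalar case (since the space decouples componentwise) and combine a dimension count with an injectivity argument. Throughout, I work with the scalar analog
$$
V_h(E):=\left\{ v_h^* \in H^1(E)\ :\ \Delta v_h^* = 0,\quad \partial v_h^*/\partial \bbn|_e \in \mathcal{P}_0(e)\ \forall e\in\partial E \right\}
$$
with scalar DoFs $v_h^* \mapsto (|e|^{-1}\int_e v_h^*\,ds)_{e\in\partial E}$; the vector statement follows by taking two independent copies.

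First, I would establish the dimension count $\dim V_h(E) = n_e^E$ (so $\dim U_h^*(E) = 2 n_e^E$). Consider the linear map $\Phi: V_h(E)\to \mathbb{R}^{n_e^E}$ defined by $\Phi(v_h^*) = (\partial_\bbn v_h^*|_e)_{e\in\partial E}$. Applying the divergence theorem to $\Delta v_h^* = 0$ gives the compatibility identity $\sum_e |e| g_e = 0$, so $\mathrm{Im}\,\Phi \subseteq W := \{(g_e) : \sum_e |e| g_e = 0\}$, a subspace of dimension $n_e^E-1$. Conversely, for any $g\in W$ the Neumann problem on the Lipschitz domain $E$ admits a harmonic solution with piecewise-constant normal derivatives prescribed by $g$, unique up to an additive constant; hence $\Phi$ surjects onto $W$ with $\ker\Phi = \mathbb{R}$, giving $\dim V_h(E) = (n_e^E - 1) + 1 = n_e^E$.

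Since the number of DoFs equals $\dim U_h^*(E)$, unisolvence reduces to injectivity of the DoF map. Let $\bbv_h^*\in U_h^*(E)$ have $\int_e \bbv_h^*\,ds = \bfzero$ for every $e\in\partial E$, and let $v$ be one of its scalar components. With $g_e := \partial_\bbn v|_e \in \mathbb{R}$, an integration by parts yields
\begin{equation*}
\int_E |\nabla v|^2\,\dEl \;=\; -\int_E v\,\Delta v\,\dEl + \sum_{e\in\partial E} g_e \int_e v\,\ds \;=\; 0,
\end{equation*}
because $\Delta v = 0$ in $E$ and $g_e$ is constant while $\int_e v\,\ds = 0$ by assumption. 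Therefore $v$ is constant on $E$, and the vanishing of any single edge average forces this constant to be zero. This holds for both components, so $\bbv_h^* = \bfzero$.

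The main (and only) subtle point is the dimension count: one must rigorously invoke the Neumann-problem existence theory on a general Lipschitz polygon to conclude surjectivity of $\Phi$ onto the compatibility subspace $W$. Once this is in hand, the injectivity step is a clean Green's identity computation exploiting the crucial feature that $\partial_\bbn v$ is piecewise constant, which is precisely what couples the DoF values to the Dirichlet energy.
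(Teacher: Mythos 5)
Your proof is correct: the componentwise reduction to a scalar space, the dimension count $\dim V_h(E)=n_e^E$ via rank--nullity for the map $v\mapsto(\partial v/\partial\bbn|_e)_{e\in\partial E}$ (with surjectivity onto the compatible data $W$ from the variational Neumann problem on the Lipschitz polygon $E$, and kernel equal to the constants), and the injectivity of the DoF map via Green's identity exploiting that $\partial v/\partial\bbn$ is edgewise constant are all sound. Note that the paper itself does not prove this proposition but defers to Ref.~\refcite{AyusoLipnikovManzini}, and your argument is essentially the standard one given in that reference, so there is no substantive divergence to report.
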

The global non-conforming virtual element space is given by
\begin{equation}
U_h^*(\Th):=\left\{\bbv^*_h\in H^{1,nc}_0(\Th) \, :\, \bbv^*_{h_{|E}}\in U_h^*(E) \quad \forall\ E\in\Th  \right\}.
\end{equation}
We also need to recall the projection operator $\Pi^{\nabla}: \left[H^1(E)\right]^2 \rightarrow \left[\mathcal{P}_1(E)\right]^2$, defined by 
\begin{equation}
\begin{aligned}
\label{eq:h_projection_post-processing}
& \int_E \nabla(\Pi^{\nabla}\bbv^*_h): \nabla \bbq~\dEl =\int_E \nabla\bbv^*_h:\nabla \bbq~\dEl \quad \forall \bbq\in\left[\P_1(E)\right]^2\\
&
\int_{\partial E} \Pi^{\nabla}\bbv^*_h~\dEl =\int_{\partial E} \bbv^*_h~\dEl.
\end{aligned}
\end{equation}

Furthermore, the following estimates will be useful in the sequel.

\begin{prop}\label{pr:investBroken}
	Under assumptions $\mathbf{(A1)}$ and $\mathbf{(A2)}$, for every $E\in\Th$ and every $\bbv_h^*\in U^*_h(E)$, it holds
	
	\begin{equation}\label{eq:invest_broken}
	| \bbv_h^* |_{1,E} \lesssim h_E^{-1}  || \bbv_h^* ||_{0,E} 
	\end{equation}
	and
	
	\begin{equation}\label{eq:invest_l2}
	|| \bbv_h^* ||_{0,E} \lesssim h_E^{1/2}  || \Pi^\partial_0 \bbv_h^* ||_{0,\dE} .
	\end{equation}

\end{prop}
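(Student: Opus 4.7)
My approach is to reduce both estimates to a finite-dimensional norm equivalence on $U^*_h(E)$ via a scaling argument. Define the rescaled element $\hat E := h_E^{-1}(E-\bbx_E)$, which by assumption $\mathbf{(A2)}$ is star-shaped with respect to a ball of radius $\geq \gamma$ and has unit diameter. For $\bbv_h^* \in U^*_h(E)$, set $\hat{\bbv}_h^*(\hat{\bbx}) := \bbv_h^*(\bbx_E + h_E \hat{\bbx})$; then $\hat{\bbv}_h^*$ belongs to the analogous space $U^*_h(\hat E)$. A change of variables gives the scaling relations
\begin{equation*}
\|\bbv_h^*\|_{0,E} = h_E\|\hat{\bbv}_h^*\|_{0,\hat E}, \qquad |\bbv_h^*|_{1,E} = |\hat{\bbv}_h^*|_{1,\hat E}.
\end{equation*}
Moreover, since the edge-average degrees of freedom are scale invariant, defining
\begin{equation*}
|\bbv_h^*|_{\mathrm{dof}}^2 := \sum_{e\in \partial E}\left|\frac{1}{|e|}\int_e \bbv_h^*\right|^2,
\end{equation*}
one has $|\bbv_h^*|_{\mathrm{dof}} = |\hat{\bbv}_h^*|_{\mathrm{dof}}$; using $\mathbf{(A1)}$,
\begin{equation*}
\|\Pi^\partial_0 \bbv_h^*\|_{0,\dE}^2 = \sum_{e\in\partial E}|e|\left|\frac{1}{|e|}\int_e \bbv_h^*\right|^2 \sim h_E\, |\bbv_h^*|_{\mathrm{dof}}^2.
\end{equation*}

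On the rescaled element, the space $U^*_h(\hat E)$ is finite-dimensional (dimension $2n_e^E$); by Proposition~\ref{prop:h_unisolvenceDofsNC} both $\|\hat{\bbv}_h^*\|_{0,\hat E}$ and $|\hat{\bbv}_h^*|_{\mathrm{dof}}$ are norms on this space, while $|\hat{\bbv}_h^*|_{1,\hat E}$ is a continuous seminorm. Finite-dimensional norm equivalence therefore gives
\begin{equation*}
\|\hat{\bbv}_h^*\|_{0,\hat E} \sim |\hat{\bbv}_h^*|_{\mathrm{dof}}, \qquad |\hat{\bbv}_h^*|_{1,\hat E} \lesssim |\hat{\bbv}_h^*|_{\mathrm{dof}},
\end{equation*}
with constants depending only on $\gamma$ (uniform across the shape-regular family of rescaled polygons).

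Undoing the scaling, the first relation yields $\|\bbv_h^*\|_{0,E} \sim h_E\,|\bbv_h^*|_{\mathrm{dof}}$; combining this with the identity above for $\|\Pi^\partial_0 \bbv_h^*\|_{0,\dE}^2$ produces $\|\bbv_h^*\|_{0,E}^2 \sim h_E^2 |\bbv_h^*|_{\mathrm{dof}}^2 \sim h_E \,\|\Pi^\partial_0 \bbv_h^*\|_{0,\dE}^2$, which is estimate~\eqref{eq:invest_l2}. Similarly, $|\bbv_h^*|_{1,E} = |\hat{\bbv}_h^*|_{1,\hat E} \lesssim |\hat{\bbv}_h^*|_{\mathrm{dof}} = |\bbv_h^*|_{\mathrm{dof}} \sim h_E^{-1}\|\bbv_h^*\|_{0,E}$ gives~\eqref{eq:invest_broken}.

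The main obstacle is justifying that the finite-dimensional norm equivalence constants on $U^*_h(\hat E)$ remain uniform over the shape-regular family of rescaled polygons, rather than depending on the specific shape of $\hat E$. This is standard in the VEM literature and can be established by a continuity/compactness argument of the type used, for instance, in Refs.~\refcite{BrennerSungSmallEdges,BLRXX}; the unisolvence supplied by Proposition~\ref{prop:h_unisolvenceDofsNC} is the non-degeneracy ingredient required to run such an argument.
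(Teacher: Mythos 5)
Your scaling reduction is clean, and the intermediate bookkeeping is correct: the scaling relations for the $L^2$ and $H^1$ (semi)norms, the scale invariance of edge averages, and the equivalence $\|\Pi^\partial_0\bbv_h^*\|_{0,\dE}^2 \sim h_E\,|\bbv_h^*|_{\mathrm{dof}}^2$ under $\mathbf{(A1)}$ are all fine. But the proof has a genuine gap exactly where you flag it, and the flag does not discharge it. On a \emph{fixed} polygon $\hat E$, finite-dimensionality plus the unisolvence of Proposition~\ref{prop:h_unisolvenceDofsNC} gives equivalence constants that depend on $\hat E$; the entire content of the proposition is that the constants depend only on the shape-regularity parameter $\gamma$. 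The repair you propose --- a continuity/compactness argument over the family of rescaled polygons --- is precisely the step that is problematic for virtual spaces: unlike FEM, there is no affine map onto a single reference element, so the spaces $U_h^*(\hat E_k)$ live on genuinely different domains and their elements are defined only implicitly as solutions of boundary value problems. Comparing them along a sequence $\hat E_k \to \hat E_*$ (which may, for instance, develop collinear adjacent edges, altering the dof structure) requires substantial work that is not available as a citable black box. Moreover, the references you invoke do \emph{not} argue by compactness: they establish uniform dof/norm equivalences by direct trace, inverse and Poincar\'e estimates whose constants are tracked through the star-shapedness parameter. So, as written, the core inequality of the proposition is asserted rather than proved.

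The paper's proof is exactly such a direct argument, and it is instructive to see how it avoids your obstacle. For \eqref{eq:invest_broken}: harmonicity gives $|\bbv_h^*|_{1,E}^2 = \int_{\dE}\nabla\bbv_h^*\bbn\cdot\bbv_h^*\ds$; since $\nabla\bbv_h^*\bbn$ is piecewise constant on $\dE$, a 1D inverse inequality combined with an $H^{-1/2}(\dE)$ bound (from Ref.~\refcite{ARTIOLI2017155}, using $\bdiv\nabla\bbv_h^*=0$) yields $\|\nabla\bbv_h^*\bbn\|_{0,\dE}\lesssim h_E^{-1/2}|\bbv_h^*|_{1,E}$, and then a scaled trace inequality plus Young's inequality closes the estimate. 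For \eqref{eq:invest_l2}: one splits $\bbv_h^*$ into its boundary mean $\bar\bbv_h^*$ and a remainder $\bbw_h^*$ with zero mean on $\dE$, applies a Poincar\'e-type inequality to $\bbw_h^*$, uses the identity $\int_{\dE}\nabla\bbw_h^*\bbn\cdot\bbw_h^*\ds = \int_{\dE}\nabla\bbw_h^*\bbn\cdot\Pi^\partial_0\bbw_h^*\ds$ together with the same boundary inverse estimate, and finally exploits the $L^2(\dE)$-orthogonality of $\Pi^\partial_0\bbw_h^*$ and $\bar\bbv_h^*$ to reassemble $\|\Pi^\partial_0\bbv_h^*\|_{0,\dE}$. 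Every constant is controlled by $\mathbf{(A1)}$--$\mathbf{(A2)}$ alone. If you wish to keep your scaling framework, you would have to prove your two unit-scale equivalences by estimates of this very kind --- at which point the scaling step buys nothing and can be dropped.
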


\begin{proof}
	We first notice that, since $\bbv_h^*\in U_h^*(E)$ is harmonic in $E$, we have
	
	\begin{equation}\label{eq:inverse1}
	| \bbv_h^* |_{1,E}^2 = \int_{\partial E} \nabla \bbv_h^*\bbn\cdot \bbv_h^* \ds \leq || \nabla\bbv_h^*\bbn||_{0,\partial E}\, || \bbv_h^*||_{0,\partial E}.
	\end{equation}
	Recalling that $(\nabla\bbv_h^*\bbn)_{|\partial E}$ is a piecewise constant vectorial function, under assumptions $\mathbf{(A1)}$ and $\mathbf{(A2)}$, the 1D inverse estimate 	
	$$
	|| \nabla\bbv_h^*\bbn||_{0,\partial E}\lesssim h_E^{-1/2} || \nabla\bbv_h^*\bbn||_{-1/2,\partial E}
	$$
	holds true. Therefore, we get (cf. Ref. \refcite{ARTIOLI2017155} and recall again that $\bdiv\nabla\bbv_h^* = 0$)
	
	\begin{equation}\label{eq:inverse2}
	|| \nabla\bbv_h^*\bbn||_{0,\partial E}\lesssim h_E^{-1/2} || \nabla\bbv_h^*||_{0,E} = h_E^{-1/2} | \bbv_h^*|_{1,E}.
	\end{equation}
	Hence, from \eqref{eq:inverse1} we get
	
	\begin{equation}\label{eq:inverse3}
	| \bbv_h^* |_{1,E} \lesssim h_E^{-1/2} || \bbv_h^*||_{0,\partial E}.
	\end{equation}
	We then exploit a scaled trace inequality, see for instance Ref. \refcite{Brenner-Scott:2008}, to infer that it holds
	
	\begin{equation}\label{eq:inverse4}
	| \bbv_h^* |_{1,E} \lesssim h_E^{-1/2} || \bbv_h^*||_{0,E}^{1/2} \left(  | \bbv_h^* |_{1,E}^2 + h_E^{-2} || \bbv_h^*||_{0,E}^2  \right)^{1/4}.
	\end{equation}
	Hence, we get
	
	\begin{equation}\label{eq:inverse5}
	| \bbv_h^* |_{1,E} \lesssim h_E^{-1/2} || \bbv_h^*||_{0,E}^{1/2} \,  | \bbv_h^* |_{1,E}^{1/2} + h_E^{-1} || \bbv_h^*||_{0,E}.
	\end{equation}
	Using the Young's inequality, we obtain
	\begin{equation}\label{eq:inverse6}
	| \bbv_h^* |_{1,E} \lesssim
	\frac{1}{2\delta}
	h_E^{-1} || \bbv_h^*||_{0,E} + \frac{\delta}{2}  | \bbv_h^* |_{1,E} + h_E^{-1} || \bbv_h^*||_{0,E},
	\end{equation}
	where $\delta > 0$ is at our disposal. We now choose $\delta$ sufficiently small to absorb in the left-hand side the second term of the right-hand side, and thus get \eqref{eq:invest_broken}.
	
	To prove \eqref{eq:invest_l2}, we first split $\bbv_h^*\in U^*_h(E)$ as
	\begin{equation}\label{eq:split_l2-1}
	\bbv_h^* = (\bbv_h^* - \bar\bbv_h^*) + \bar\bbv_h^* = \bbw_h^* + \bar\bbv_h^* ,
	\end{equation}
	where the constant vector $\bar\bbv_h^*$ is defined by 
	$$
	\bar\bbv_h^* = \frac{1}{|\partial E|}\int_{\partial E} \bbv_h^*\ds.
	$$
	and $\bbw_h^* : = \bbv_h^* - \bar\bbv_h^*$.
	Then, a direct computation shows that
	\begin{equation}\label{eq:invest_l2_2}
	|| \bbv_h^* ||_{0,E} \leq || \bbw_h^* ||_{0,E} + || \bar\bbv_h^* ||_{0,E} \lesssim || \bbw_h^* ||_{0,E} + h_E^{1/2}  ||  \bar \bbv_h^* ||_{0,\dE} .
	\end{equation}
	To estimate $|| \bbw_h^* ||_{0,E}$, we notice that $\bbw_h^*$ has zero mean value on $\dE$. Therefore, a Poincar\'e-type estimate gives, see for instance Ref. \refcite{NazarovRepin}:
	\begin{equation}\label{eq:invest_l2_3}
	|| \bbw_h^* ||_{0,E} \lesssim h_E  | \bbw_h^* |_{1,E} .
	\end{equation}
	Using that $\nabla\bbw_h^*\bbn$ is piecewise constant on $\dE$, we get (cf. also \eqref{eq:inverse2})
	\begin{equation}\label{eq:invest_l2_4}
	\begin{aligned}
	| \bbw_h^* |_{1,E}^2 =
	\int_\dE \nabla\bbw_h^*\bbn \cdot \bbw_h^*\ds &=  \int_\dE \nabla\bbw_h^*\bbn \cdot \Pi^\partial_0\bbw_h^*\ds \leq
	|| \nabla\bbw_h^*\bbn ||_{0,\dE} || \Pi^\partial_0\bbw_h^* ||_{0,\dE}\\
	& \lesssim
	h_E^{-1/2} || \Pi^\partial_0\bbw_h^* ||_{0,\dE}
	| \bbw_h^* |_{1,E} .
	\end{aligned}
	\end{equation}
	Therefore, we obtain
	
	\begin{equation}\label{eq:invest_l2_5}
	| \bbw_h^* |_{1,E} \lesssim
	h_E^{-1/2} || \Pi^\partial_0\bbw_h^* ||_{0,\dE}
	.
	\end{equation}
	Combining \eqref{eq:invest_l2_2}, \eqref{eq:invest_l2_3} and \eqref{eq:invest_l2_5}, we infer
	
	\begin{equation}\label{eq:invest_l2_6}
	|| \bbv_h^* ||_{0,E} \lesssim h_E^{1/2} \left( || \Pi^\partial_0\bbw_h^* ||_{0,\dE} +  || \bar\bbv_h^* ||_{0,\dE} \right).
	\end{equation}
	We now notice that, since
	
	$$
	\int_\dE \Pi^\partial_0\bbw_h^* \cdot \bar\bbv_h^* \ds = 0,
	$$
	it holds 
	
	\begin{equation}\label{eq:invest_l2_7}
	|| \Pi^\partial_0\bbw_h^* ||_{0,\dE} +  || \bar\bbv_h^* ||_{0,\dE} \lesssim || \Pi^\partial_0\bbw_h^* + \bar\bbv_h^* ||_{0,\dE} = 
	|| \Pi^\partial_0\bbv_h^* ||_{0,\dE} . 
	\end{equation}
	Now estimate \eqref{eq:invest_l2} follows from \eqref{eq:invest_l2_6} and \eqref{eq:invest_l2_7}.
\end{proof}

We are ready to prove the following convergence result for a suitable non-conforming post-processed displacement field. 

\begin{thm}~\label{theorem:post-processing_result}
	Let $(\bfsigma,\bbu)$ be the solution of continuous Problem~\eqref{problem:cont-strong} and  $(\bfsigma_h,\bbu_h,\bflambda_h)$ be the discrete solution of Problem~\eqref{HybridProblem}.
	Define $\bbu_h^* \in U_h^*(\Th)$ such that it holds:
	\begin{equation}~\label{eq:projectionOnBoundaryNonConformingApproximation}
	\Pi_{0}^{\partial} (\bbu_h^*- \bflambda_h)=0.
	\end{equation}
	Then we have
	\begin{equation}~\label{eq:inequalityMainTheorem}
	||\bbu-\bbu_h^*||_{0}\lesssim  h^{2}.
	\end{equation}
	In addition, if the family of meshes $\left\{ \Th \right\}_h$ is also quasi-uniform, it holds 
	\begin{equation}~\label{eq:H1inequalityMainTheorem}
	|\bbu-\bbu_h^*|_{1,\Th}\lesssim  h.
	\end{equation}
	
\end{thm}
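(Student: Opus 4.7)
The plan is to introduce the canonical interpolant $\bbu_h^I \in U_h^*(\Th)$ defined by matching the degrees of freedom \eqref{eq:nonConformingDofs} to those of $\bbu$, so that $\Pi_0^{\partial}\bbu_h^I = \Pi_0^{\partial}\bbu$ on every edge of $\Eh$. Standard non-conforming VEM approximation theory (see Refs.~\refcite{AyusoLipnikovManzini,ncHVEM}) yields
\begin{equation*}
\|\bbu - \bbu_h^I\|_{0} \lesssim h^2\|\bbu\|_2, \qquad |\bbu - \bbu_h^I|_{1,\Th} \lesssim h\|\bbu\|_2.
\end{equation*}
Thanks to the triangle inequality, it then suffices to bound $\|\bbu_h^I - \bbu_h^*\|_{0}$ and $|\bbu_h^I - \bbu_h^*|_{1,\Th}$, which is the main task.

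The key observation is that $\bbw_h^* := \bbu_h^I - \bbu_h^* \in U_h^*(\Th)$, so the element-wise inverse-type bound \eqref{eq:invest_l2} of Proposition~\ref{pr:investBroken} applies: $\|\bbw_h^*\|_{0,E} \lesssim h_E^{1/2}\|\Pi_0^{\partial}\bbw_h^*\|_{0,\dE}$. Now the boundary projection of $\bbw_h^*$ is explicitly computable: on every interior edge $e\in\Eh^I$, by the definition of $\bbu_h^I$ and the condition \eqref{eq:projectionOnBoundaryNonConformingApproximation},
\begin{equation*}
\Pi_0^{\partial}\bbw_h^* = \Pi_0^{\partial}\bbu - \Pi_0^{\partial}\bflambda_h = \Pi_0^{\partial}(\bbu - \bflambda_h),
\end{equation*}
while on boundary edges both traces vanish since $\bbg = \bfzero$. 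Squaring, summing over $E \in \Th$ and using the mesh regularity $h_E\sim h_e$ together with the bounded overlap of edges in elements, we obtain
\begin{equation*}
\|\bbw_h^*\|_{0}^2 \lesssim \sum_{E\in\Th} h_E\|\Pi_0^{\partial}\bbw_h^*\|_{0,\dE}^2 \lesssim \sum_{e\in\Eh^I} h_e\|\Pi_0^{\partial}(\bflambda_h - \bbu)\|_{0,e}^2 = |\Pi_0^{\partial}(\bflambda_h - \bbu)|_{-1/2,h}^2.
\end{equation*}
Invoking Corollary~\ref{coroll:multipliers}, this last quantity is $\lesssim h^4$, which delivers $\|\bbw_h^*\|_0 \lesssim h^2$ and hence \eqref{eq:inequalityMainTheorem}.

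For the broken $H^1$ estimate, we apply the inverse inequality \eqref{eq:invest_broken} element-wise to $\bbw_h^* \in U_h^*(E)$,
\begin{equation*}
|\bbw_h^*|_{1,\Th}^2 = \sum_{E\in\Th} |\bbw_h^*|_{1,E}^2 \lesssim \sum_{E\in\Th} h_E^{-2}\|\bbw_h^*\|_{0,E}^2 \lesssim h^{-2}\|\bbw_h^*\|_{0}^2 \lesssim h^2,
\end{equation*}
where quasi-uniformity has been used to pull the factor $h_E^{-2}$ out of the sum. Combined with the standard interpolation bound $|\bbu - \bbu_h^I|_{1,\Th} \lesssim h$, the triangle inequality yields \eqref{eq:H1inequalityMainTheorem}.

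The main technical hurdle is the tight control of $\Pi_0^{\partial}\bbw_h^*$, which is precisely what Theorem~\ref{theorem: boundaryEstimateL2} and its Corollary~\ref{coroll:multipliers} were designed to provide; the rest of the argument is structural, exploiting that the post-processed field lives in the non-conforming VEM space and therefore inherits its element-wise inverse-trace estimates from Proposition~\ref{pr:investBroken}.
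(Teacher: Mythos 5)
Your proposal is correct and follows essentially the same route as the paper's proof: the same interpolant (defined by matching the edge-average degrees of freedom of $\bbu$), the same reduction of $\Pi_0^{\partial}(\bbu_h^I-\bbu_h^*)$ to $\Pi_0^{\partial}(\bbu-\bflambda_h)$, the same use of Proposition~\ref{pr:investBroken} (estimates \eqref{eq:invest_l2} and \eqref{eq:invest_broken}) and of Corollary~\ref{coroll:multipliers}, and the same quasi-uniformity argument for the broken $H^1$ bound. If anything, your treatment is marginally cleaner in two spots: you note explicitly that boundary-edge contributions vanish because $\bbg=\bfzero$ (a detail the paper glosses over), and in the $H^1$ step you reuse the already-established bound $\|\bbw_h^*\|_0\lesssim h^2$ directly rather than re-deriving it via a triangle inequality as the paper does.
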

\begin{proof}
	For the displacement field $\bbu$, we define the non-conforming interpolant $\tilde{\bbu}_h^*\in U_h^*(\Th)$
	imposing:
	\begin{equation}~\label{eq:projectionOnBoundaryNonConformingInterpolant}
	\Pi^{\partial}_{0}(\tilde{\bbu}^*_h - \bbu)= \bfzero
	\end{equation}
	for each edge $e\in\Eh$. Due to Proposition~\ref{prop:h_unisolvenceDofsNC}), $\tilde{\bbu}_h^*$ is well-defined. Similarly, $\bbu_h^* \in U_h^*(\Th)$ is well-defined by \eqref{eq:projectionOnBoundaryNonConformingApproximation}.
	Writing now 
	\begin{equation}
	\bbu -\bbu_h^* =(\bbu - \tilde{\bbu}_h^*)+ (\tilde{\bbu}_h^*-\bbu_h^*)
	\end{equation}
	and using the triangle inequality, we have
	\begin{equation}\label{eq:split_tria}
	||\bbu-\bbu_h^*||_{0}\leq 	||\bbu-\tilde{\bbu}_h^*||_{0} +	||\tilde{\bbu}_h^*-\bbu_h^*||_{0}.
	\end{equation}
	By standard arguments, see Refs.~\refcite{AyusoLipnikovManzini,Brenner-Scott:2008}, we get
	\begin{equation}\label{eq:errorNC}
	||\bbu -\tilde{\bbu}_h^*||_0 \lesssim h^2 . 
	\end{equation}
	To estimate $||\tilde{\bbu}_h^*-\bbu_h^*||_{0}$, we notice that from~\eqref{eq:projectionOnBoundaryNonConformingApproximation} and~\eqref{eq:projectionOnBoundaryNonConformingInterpolant}, we have 
	\begin{equation}\label{eq:stand}
	\Pi_{0}^{\partial}\left(\bbu_h^*-\tilde{\bbu}_h^*\right)=\Pi_{0}^{\partial}(\bflambda_h-\bbu).
	\end{equation}
	Fix an element $E\in\Th$; due to estimate \eqref{eq:invest_l2} of Proposition \ref{pr:investBroken} and to \eqref{eq:stand}, we get
	\begin{equation}~\label{eq:scalingArguments}
	||\bbu_h^*-\tilde{\bbu}_h^*||_{0,E}\lesssim  h_E^{1/2} ||\Pi_{0}^{\partial}(\bbu_h^*-\tilde{\bbu}_h^*)||_{0,\partial E}
	=  h_E^{1/2} ||\Pi_{0}^{\partial}(\bflambda_h-\bbu)||_{0,\partial E} .
	\end{equation}
	Summing all the local estimates \eqref{eq:scalingArguments} and combining with Corollary \ref{coroll:multipliers}, we get 
	\begin{equation}\label{eq:errrorDiscrNC}
	||\bbu_h^*-\tilde{\bbu}_h^*||_{0}\lesssim  h^2 .  
	\end{equation}
	Estimate \eqref{eq:inequalityMainTheorem} now follows from \eqref{eq:split_tria}, \eqref{eq:errorNC} and \eqref{eq:errrorDiscrNC}.
	To prove \eqref{eq:H1inequalityMainTheorem}, we observe that 
	\begin{equation}\label{eq:H1split_tria}
	|\bbu-\bbu_h^*|_{1,\Th}\leq 	|\bbu-\tilde{\bbu}_h^*|_{1,\Th} +	|\tilde{\bbu}_h^*-\bbu_h^*|_{1,\Th}.
	\end{equation}
	By standard arguments, we have
	\begin{equation}\label{eq:H1_1}
	|\bbu-\tilde{\bbu}_h^*|_{1,\Th}
	\lesssim h.
	\end{equation}
	Using the inverse estimate \eqref{eq:invest_broken} of Proposition \ref{pr:investBroken}, we get
	\begin{equation}\label{eq:H1_2}
	\begin{aligned}
	|\tilde{\bbu}_h^*-\bbu_h^*|_{1,\Th} = \left(\sum_{E\in\Th} |\tilde{\bbu}_h^*-\bbu_h^*|_{1,E}^2\right)^{1/2} &\lesssim 
	\left(\sum_{E\in\Th} h_E^{-2}||\tilde{\bbu}_h^*-\bbu_h^*||_{0,E}^2\right)^{1/2}\\
	&\lesssim h^{-1}||\tilde{\bbu}_h^*-\bbu_h^*||_{0},
	\end{aligned}
	\end{equation}
	where in the last step we have used that the family of meshes is quasi-uniform. Since 
	$$
	||\tilde{\bbu}_h^*-\bbu_h^*||_{0}\le ||\tilde{\bbu}_h^*-\bbu||_{0} + ||\bbu-\bbu_h^*||_{0},
	$$
	from \eqref{eq:stand} and \eqref{eq:inequalityMainTheorem}, estimate \eqref{eq:H1_2} leads to
	\begin{equation}\label{eq:H1_3}
	|\tilde{\bbu}_h^*-\bbu_h^*|_{1,\Th}
	\lesssim h.
	\end{equation}
	Estimate \eqref{eq:H1inequalityMainTheorem} now follows from \eqref{eq:H1split_tria}, \eqref{eq:H1_1} and \eqref{eq:H1_3}.
\end{proof}
\section{Numerical Results}~\label{section:hybridNumer}
In this section we validate the proposed VEM hybridized approach through some numerical experiments. We first give numerical evidence of the theoretical results. Then, we compare the solving time of the conforming and hybridized VE method, showing the better performace of this latter procedure, especially for the 3D case.
We will consider the following two test problems.
\paragraph{Test case 2D.}
Given $\O_1=[0,1]^2$ the unit square, we consider the following analytical solution
\begin{eqnarray}
\bbu:=
\left(
\begin{array}{l}
0.5 (\sin(2 \pi x))^2\sin(2 \pi y) \cos(2 \pi y)\\
-0.5 (\sin(2 \pi y))^2 \sin(2 \pi x)\cos(2 \pi x)
\end{array}
\right).
\end{eqnarray}
The loading term $\bbf$ is computed accordingly. For this problem we consider a homogeneous and isotropic material with Lam\'e coefficients $\lambda = 10^5$ and $\mu=0.5$ (nearly incompressible material). 
\paragraph{Test case 3D.}
Give the unit cube $\O_2=[0,1]^3$, we consider a 3D elastic problem with the following exact displacement solution and load term:
\begin{equation}
\left\{
\begin{aligned}
&u_1 = u_2 = u_3 = 10 S(x, y, z)\\
&f_1 = -10\pi^2 ((\lambda + \mu) cos(\pi x) sin(\pi y + \pi z) - (\lambda + 4\mu)S(x, y, z))\\
&f_2 = -10\pi^2 ((\lambda + \mu) cos(\pi y) sin(\pi x + \pi z) - (\lambda + 4\mu)S(x, y, z))\\
&f_3 = -10\pi^2 ((\lambda + \mu) cos(\pi z) sin(\pi x + \pi y) - (\lambda + 4\mu)S(x, y, z))
\end{aligned}
\right.
\end{equation}
where $S(x, y, z)=\sin(\pi x)\sin(\pi y)\sin(\pi z)$. In this case, we consider a compressible material where the Lam\'e constants are $\lambda=1$ and $\mu=1$.
\paragraph{Mesh.}
In order to test our problems we consider two packages of meshes of four types each, see Figure~\ref{fig:h_meshes}:
\begin{itemize}
	\item \textbf{2D meshes}: the unit square $\O_1$ is discretized as follows : i) \texttt{Square}, a uniform mesh composed by standard structured squares; ii)  \texttt{Tria}, a Delanuay triangolation of the domain $\O_1$~\refcite{Triangle}; iii) \texttt{CVT}, a centroidal Voronoi tessellation~\refcite{Du:Faber99} generated with \textbf{Polymesher}~\refcite{TPPM12}; iv) \texttt{Rand}, random polygons.
	\item \textbf{3D meshes}: for the unit cube $\O_2$, we take: a) \texttt{Cube}, a uniform mesh composed by standard structured cubes; b)  \texttt{Tetra}, a Delanuay tetrahedralization of the domain $\O_2$~\refcite{tetgen}; c) \texttt{CVT}, a centroidal Voronoi tessellation~\refcite{Du:Faber99}; d) \texttt{Rand}, random polyhedra thanks to Voronoi tessellation achieved with random control points.
\end{itemize}
We remark that the meshes \texttt{CVT} and \texttt{Rand} have interesting features which challenge the robustness of the virtual element approach. Indeed, they could have some elements
with tiny faces and edges, and we remark that such a situation is not covered by the developed theory, i.e., the assumptions $\mathbf{(A1)}$ and $\mathbf{(A2)}$ for the two-dimensional case are not both satisfied (the same occurs for the 3D case).
In order to assess the convergence rate, for each type of mesh, we define the following mesh-size $h$:
$$
h:= \frac{1}{N_E}\sum_{i=1}^{N_E}h_E
$$
where we recall that $N_E$ is the number of elements in the mesh, and $h_E$ is the diameter of the polytopal element $E$.
\newcommand{\sizeGraphHybrid}{0.35}
\newcommand{\sizeMeshHybrid}{0.24}
\begin{figure}[!ht]
	\centering
	\renewcommand{\thesubfigure}{}
	\subfigure[i) \texttt{Square}]{\includegraphics[width=\sizeMeshHybrid\textwidth,trim = 0mm 0mm 0mm 0mm, clip]{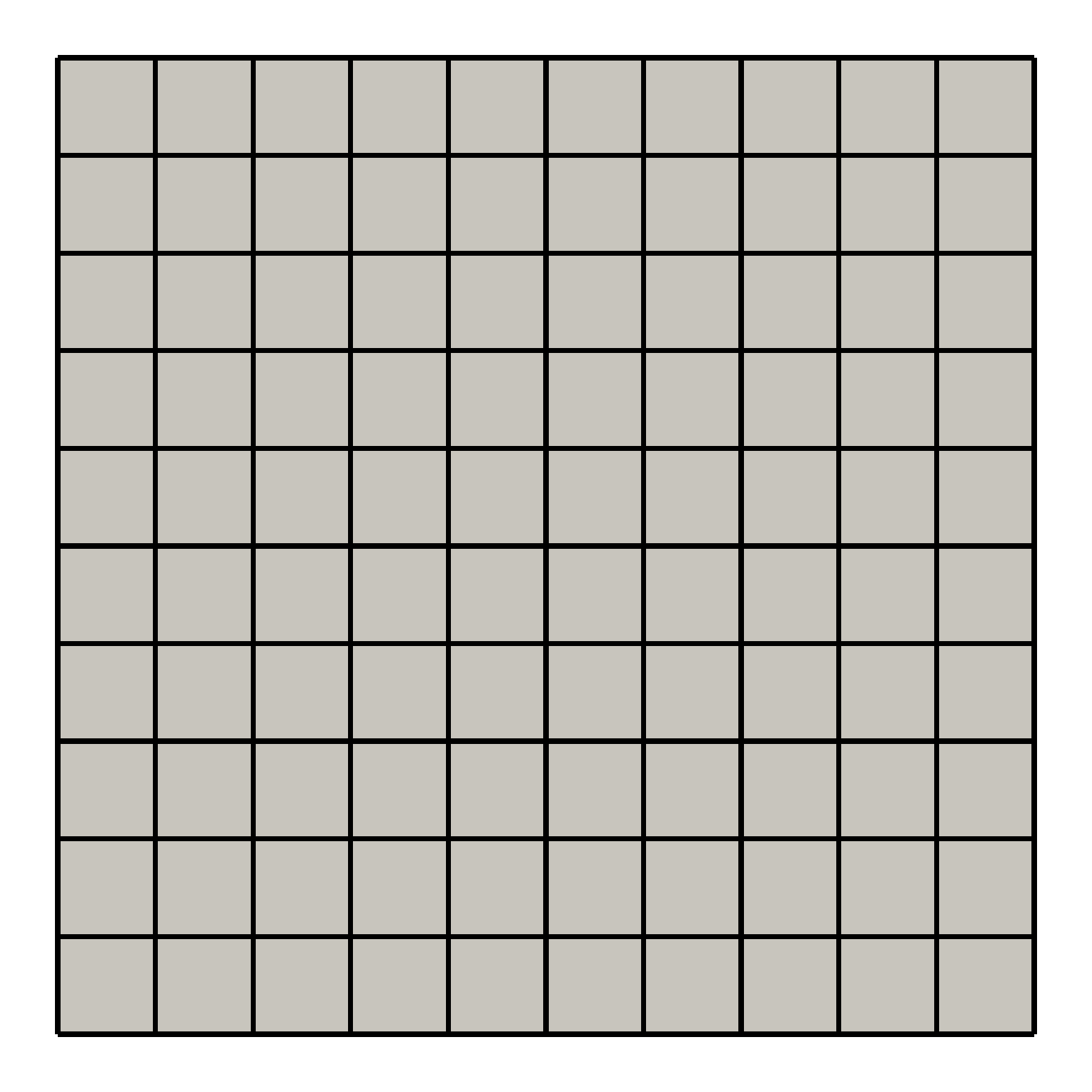}}
	\subfigure[ii) \texttt{Tria}]{\includegraphics[width=\sizeMeshHybrid\textwidth,trim = 0mm 0mm 0mm 0mm, clip]{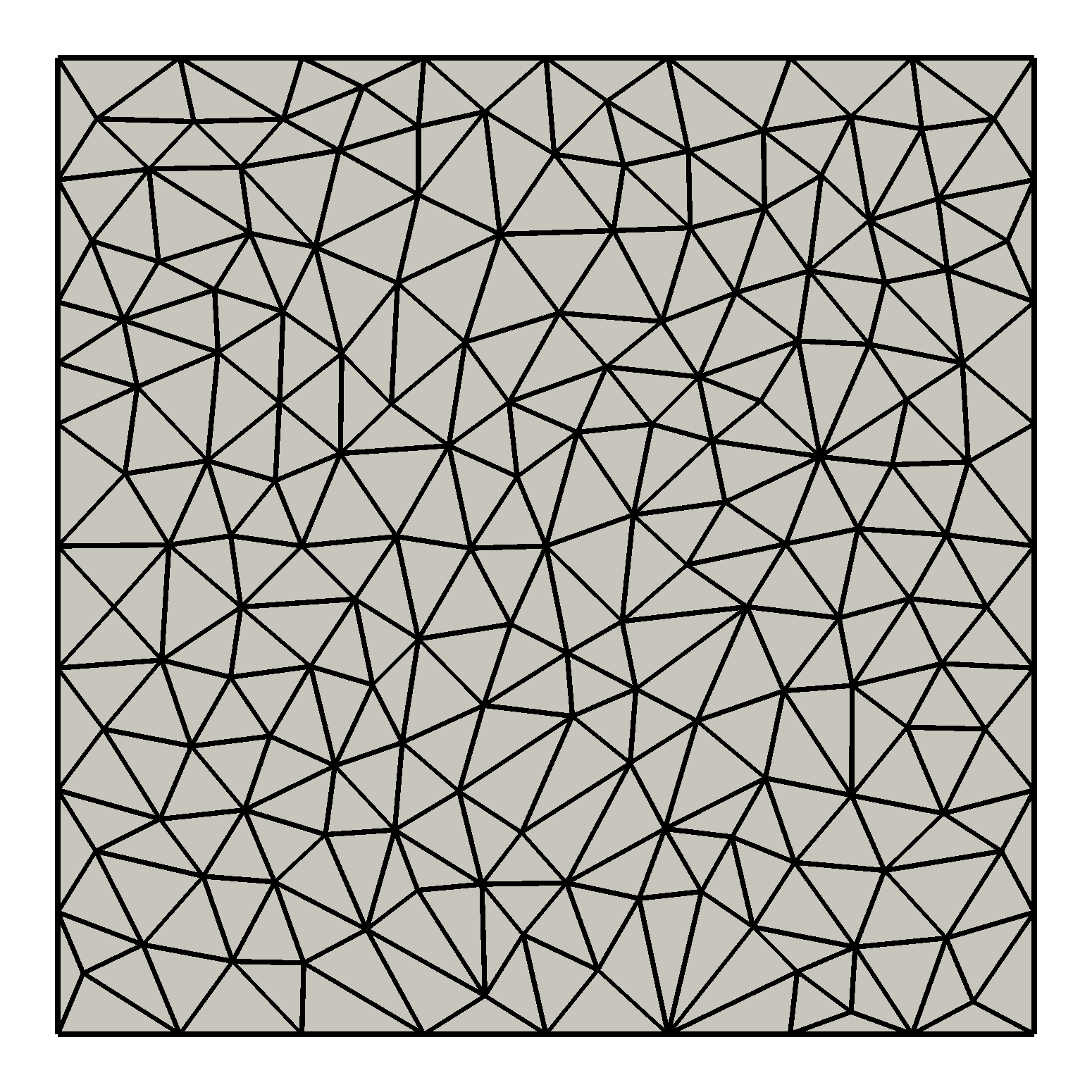}}
	\subfigure[iii) \texttt{CVT}]{\includegraphics[width=\sizeMeshHybrid\textwidth,trim = 0mm 0mm 0mm 0mm, clip]{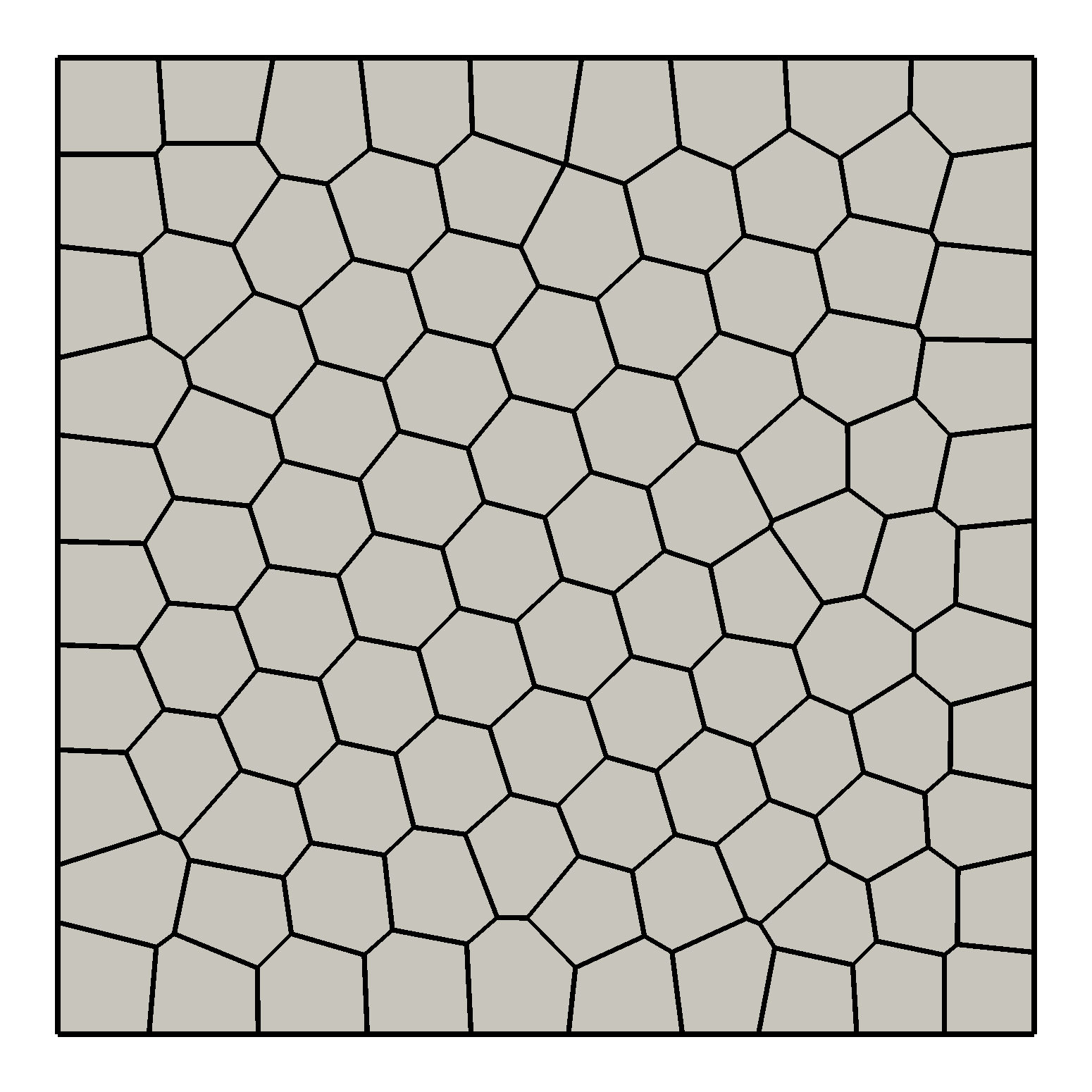}}
	\subfigure[iv) \texttt{Rand}]{\includegraphics[width=\sizeMeshHybrid\textwidth,trim = 0mm 0mm 0mm 0mm, clip]{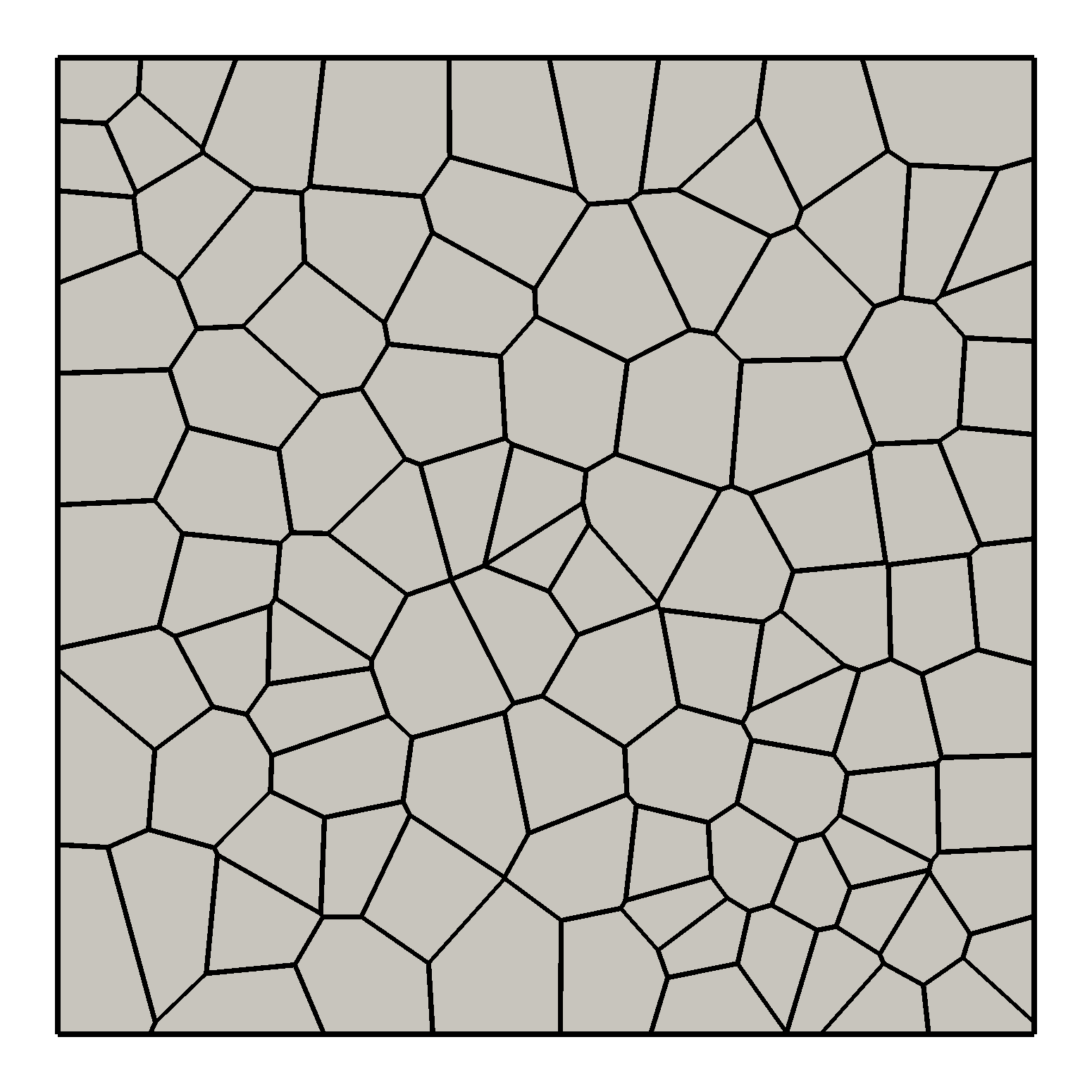}}\\
	\subfigure[a) \texttt{Cube}]{\includegraphics[width=\sizeMeshHybrid\textwidth,trim = 0mm 0mm 0mm 0mm, clip]{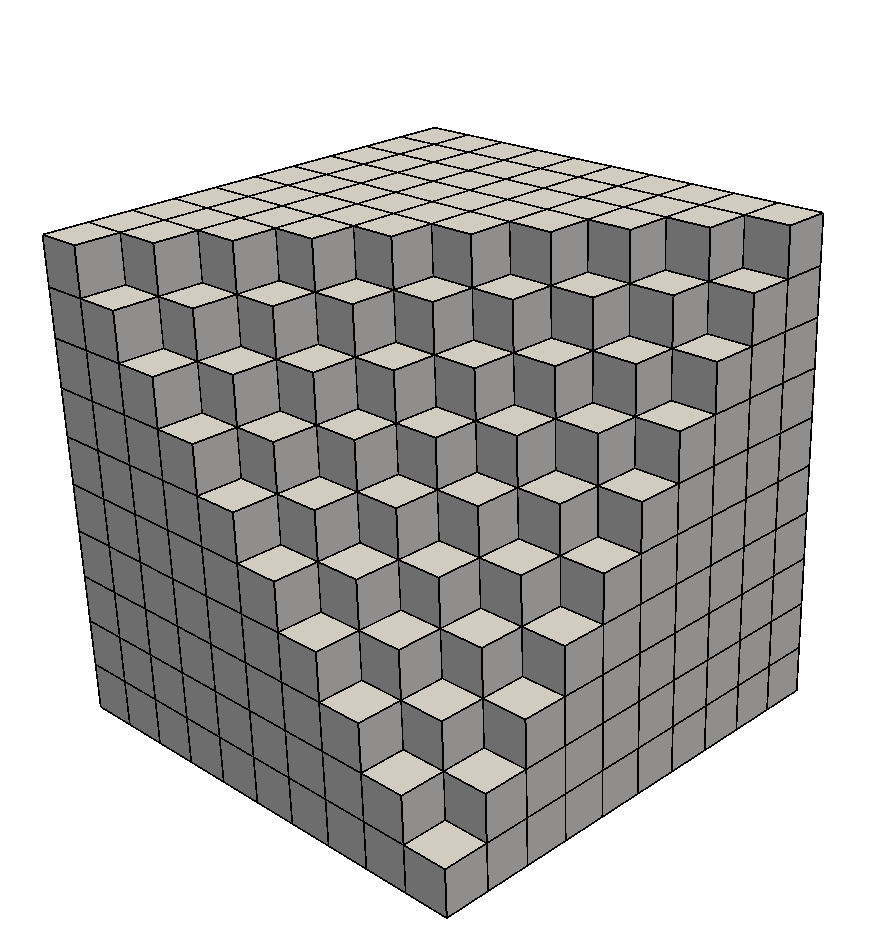}}
	\subfigure[b) \texttt{Tetra}]{\includegraphics[width=\sizeMeshHybrid\textwidth,trim = 0mm 0mm 0mm 0mm, clip]{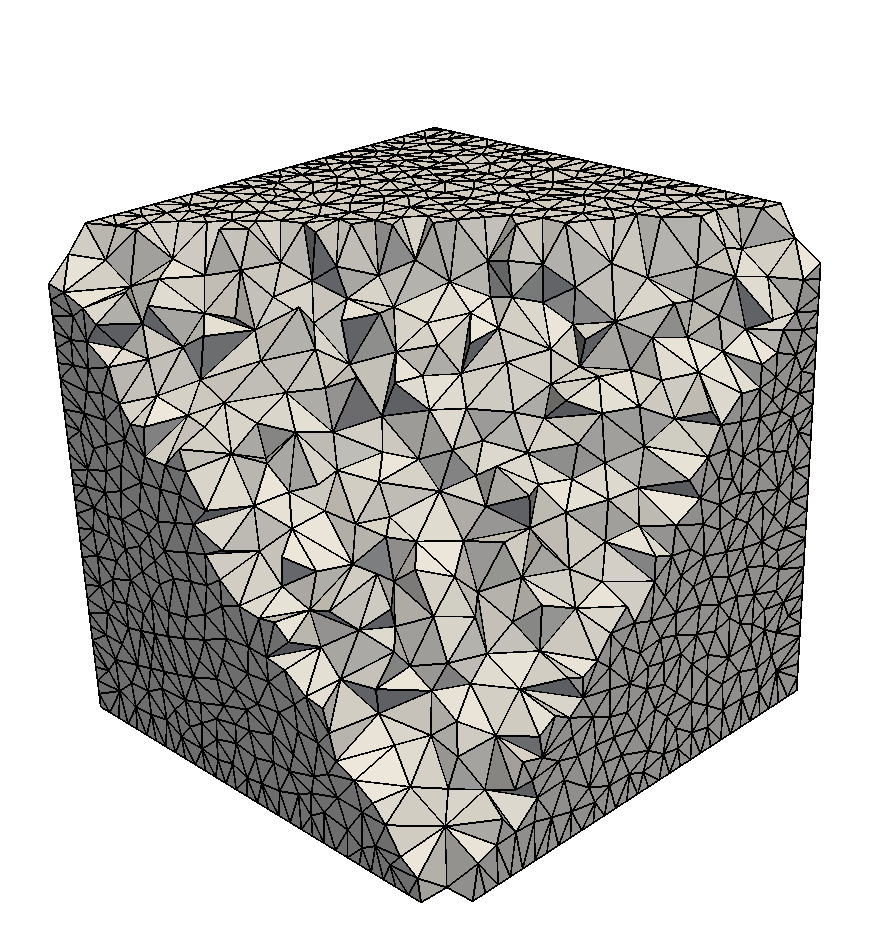}}
	\subfigure[c) \texttt{CVT}]{\includegraphics[width=\sizeMeshHybrid\textwidth,trim = 0mm 0mm 0mm 0mm, clip]{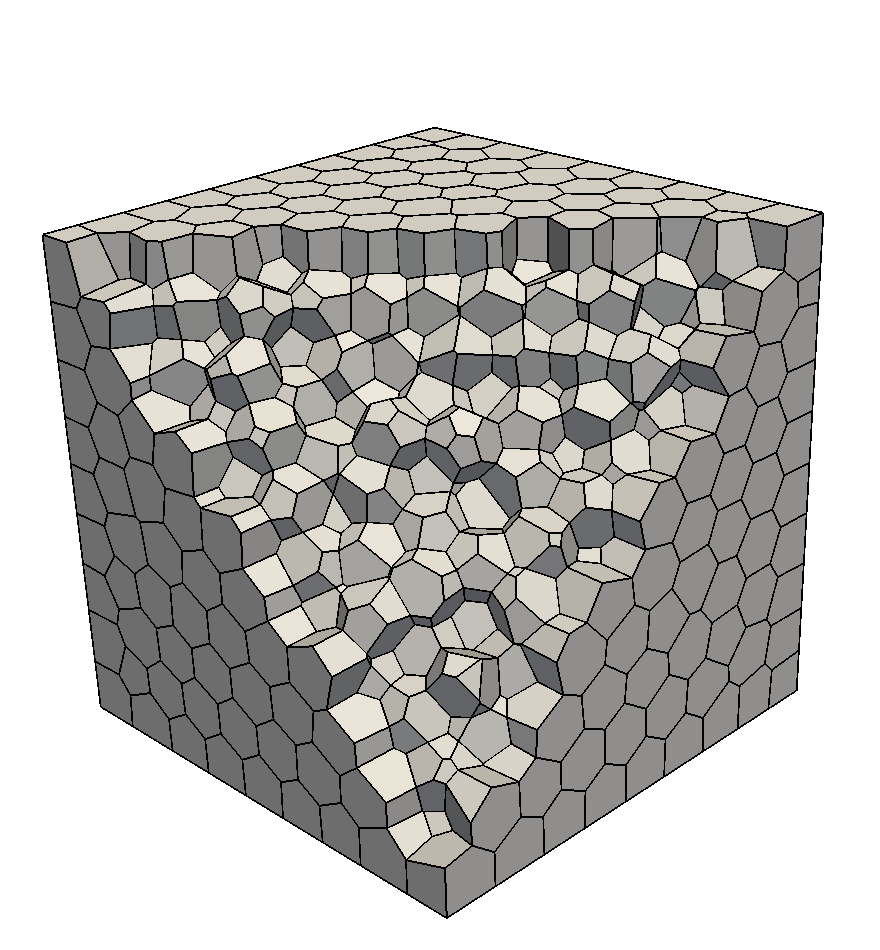}}
	\subfigure[d) \texttt{Rand}]{\includegraphics[width=\sizeMeshHybrid\textwidth,trim = 0mm 0mm 0mm 0mm, clip]{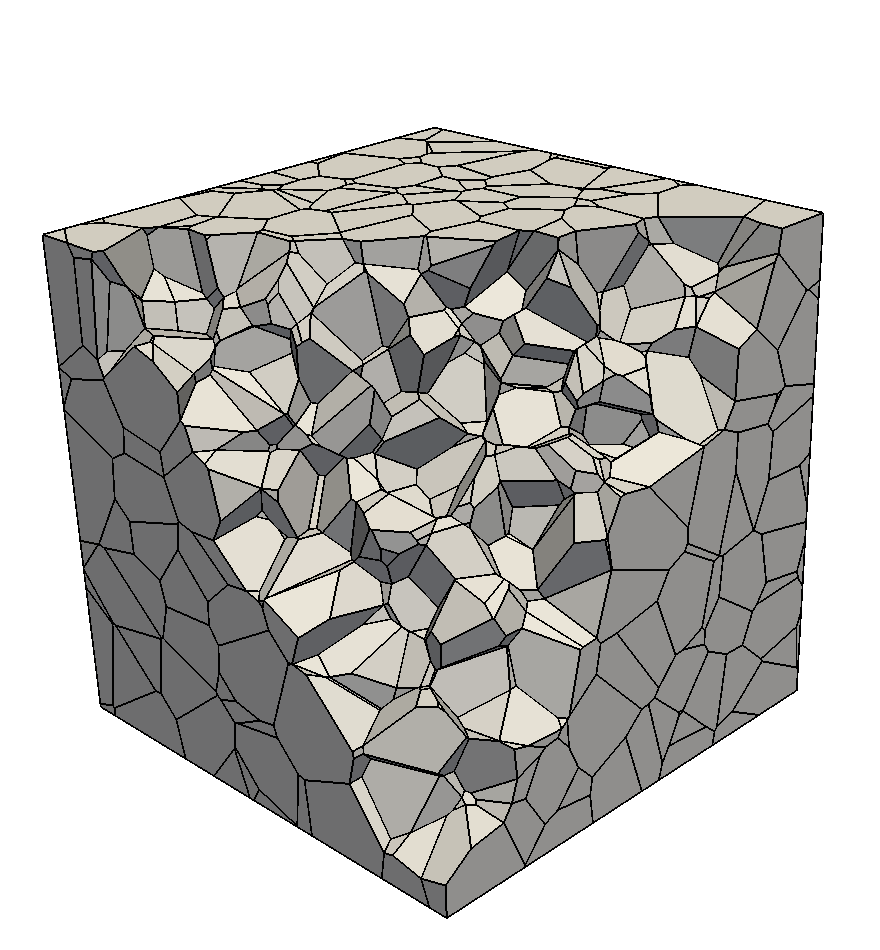}}					
	\caption{Overview of adopted meshes: in the first row the meshes for test case 2D, while in the second row the meshes for test case 3D.}
	\label{fig:h_meshes}
\end{figure}
\subsection{Convergence results}~\label{ss:convergenceResultsHybrid}
The first numerical results focus on the accuracy of the proposed VEM method using the hybridized procedure on the previous two test cases. To carry out this assessment, we use the following error norms:	
\begin{itemize}
	\item[$\bullet$] $L^2$ error norm for the displacement field:
	$ ||\bbu - \bbu_h ||_0$.
	
	\item[$\bullet$] $L^2$ error on the divergence:
	$ ||\bdiv \bfsigma - \bdiv \bfsigma_h ||_0$.
	
	\item[$\bullet$] $L^2$ error on the projection:
$ || \bfsigma - \Pi_h\bfsigma_h ||_0$.

	\item[$\bullet$] Discrete error norms for the stress field:
	\begin{equation*}
	E_{\bfsigma}   :=\left( \sum_{e \in \Eh} h_e\int_{e} \kappa\,| (\bfsigma - \bfsigma_h)\bbn  |^2~\ds\right)^{1/2} ,
	\end{equation*}
	where $\kappa=\frac{1}{2} {\rm tr}(\D)$ (the material is here homogeneous).
\end{itemize}
We will give the numerical evidence that all the above quantities behave as $O(h)$.
%
\begin{figure}[!ht]
	\centering
	\includegraphics[width=\sizeGraphHybrid\textwidth,trim = 0mm 0mm 0mm 0mm, clip]{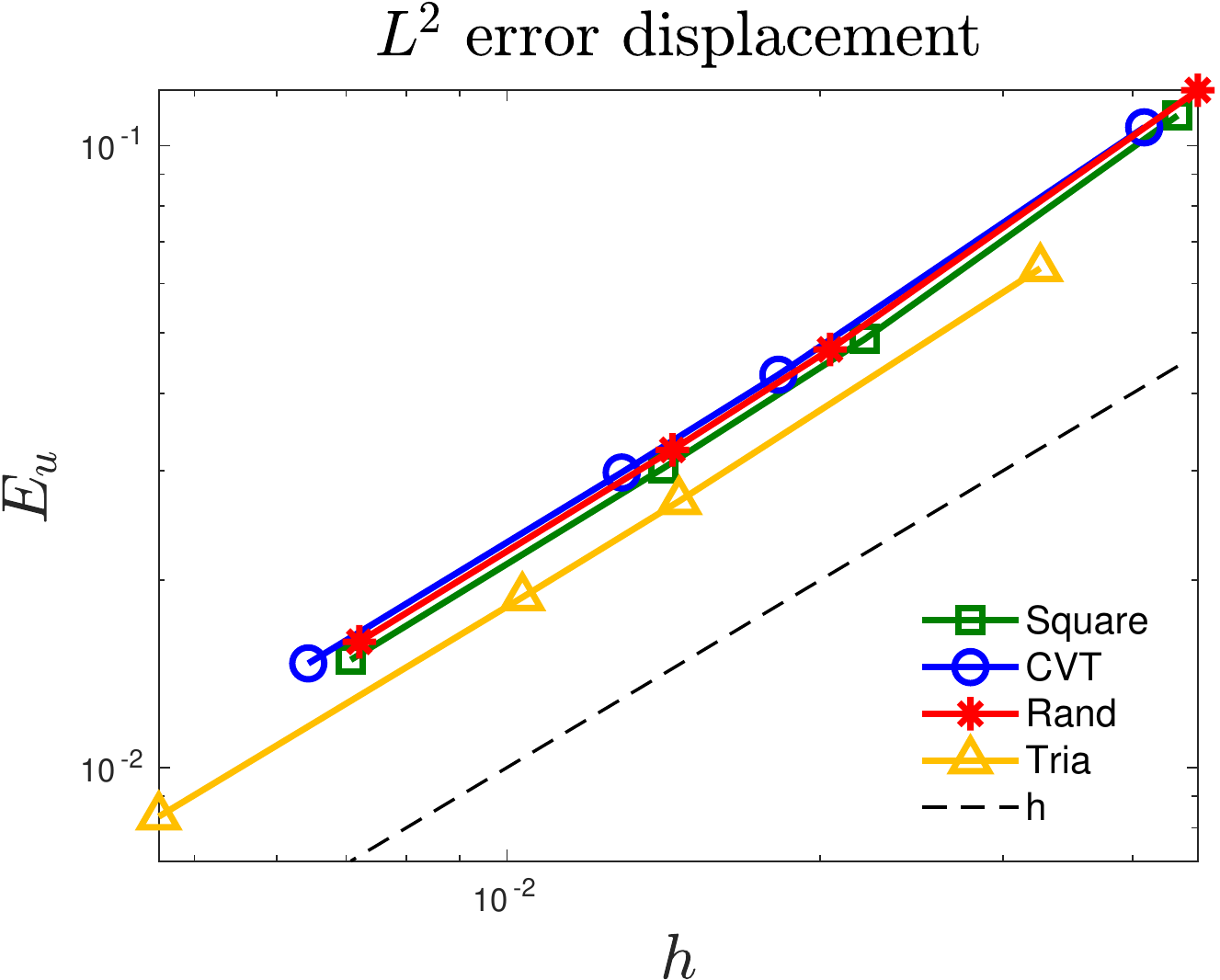}
	\hspace*{10pt}
	\includegraphics[width=\sizeGraphHybrid\textwidth,trim = 0mm 0mm 0mm 0mm, clip]{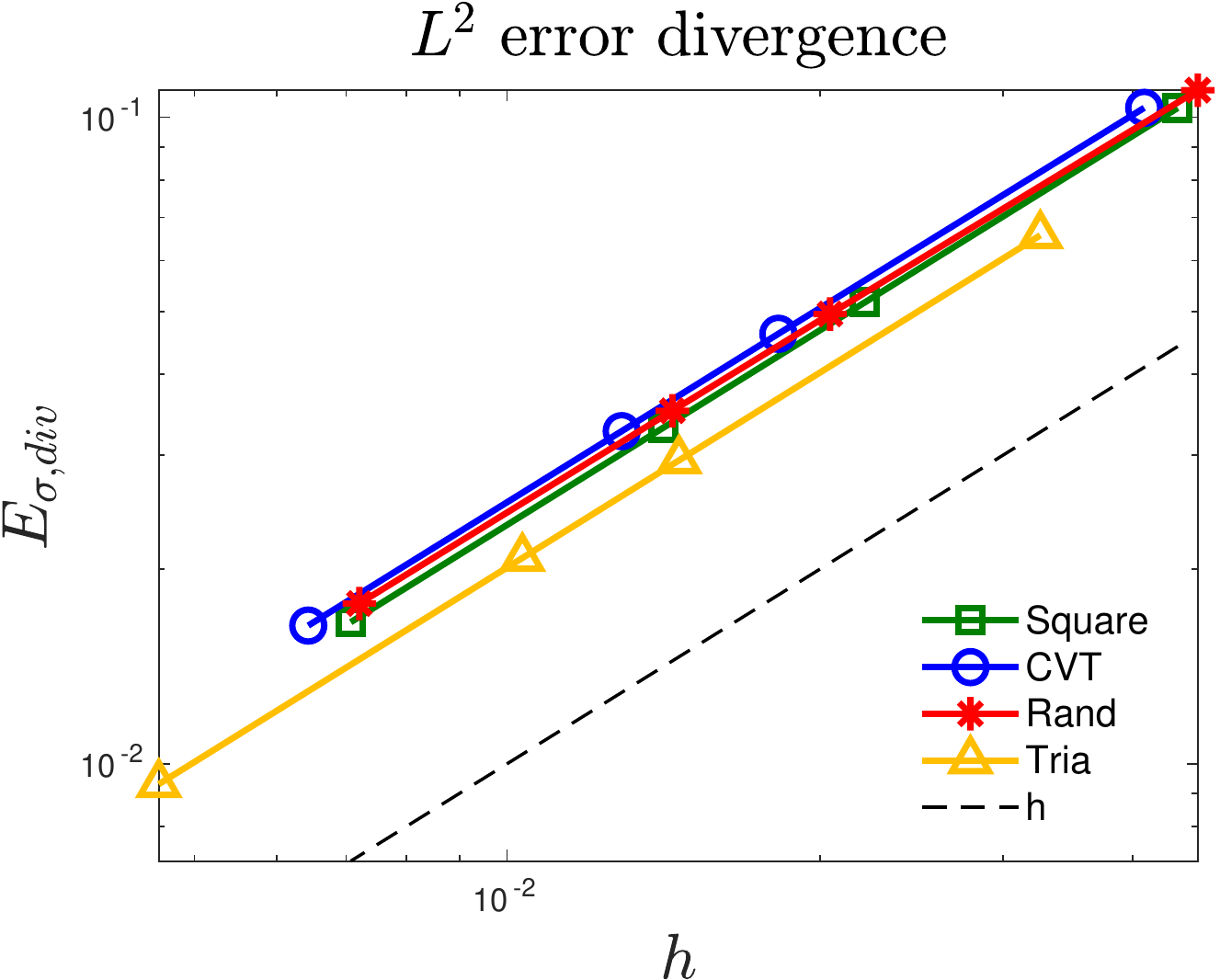}\\
	\vspace*{8pt}
	\includegraphics[width=\sizeGraphHybrid\textwidth,trim = 0mm 0mm 0mm 0mm, clip]{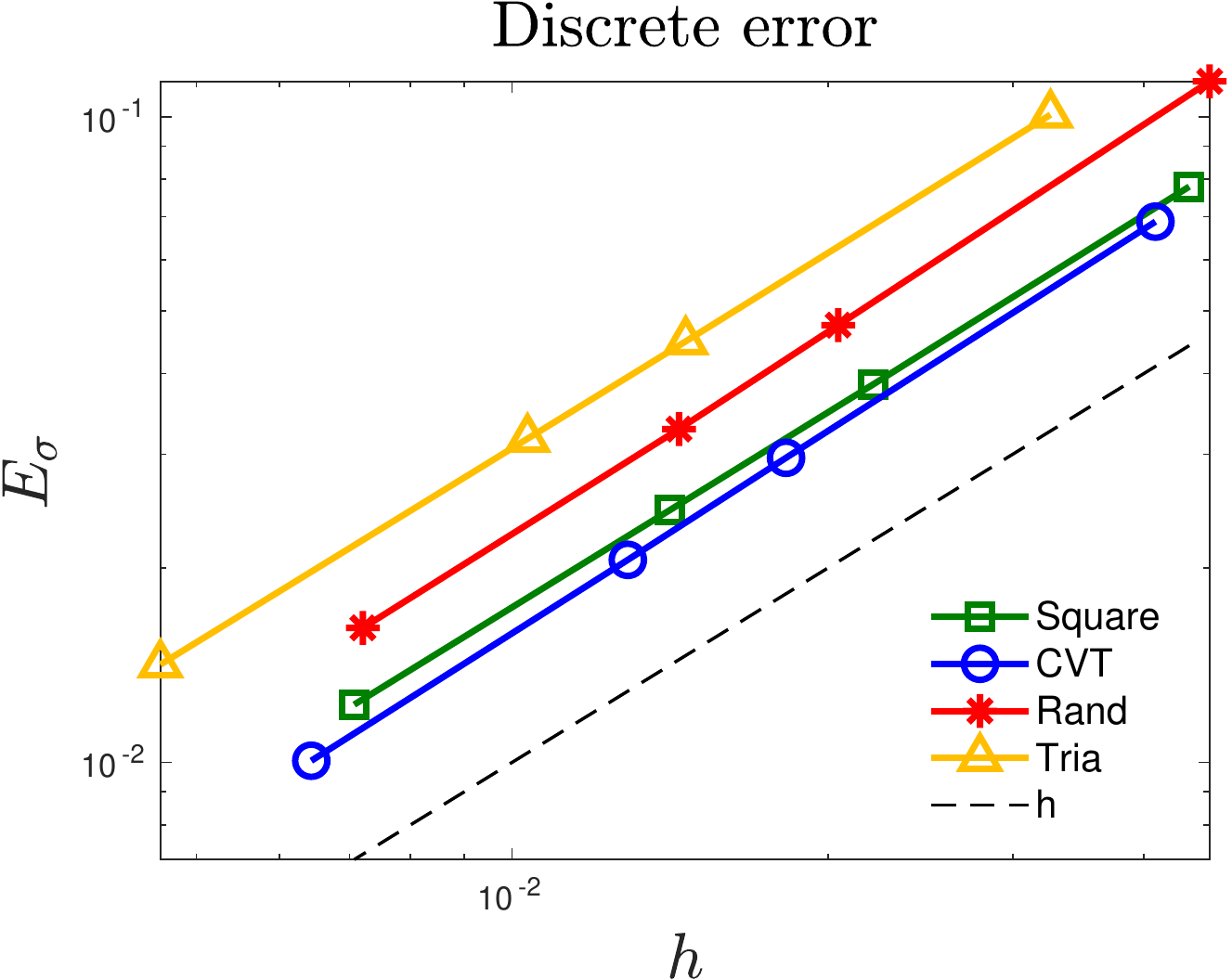}
	\hspace*{10pt}
	\includegraphics[width=\sizeGraphHybrid\textwidth,trim = 0mm 0mm 0mm 0mm, clip]{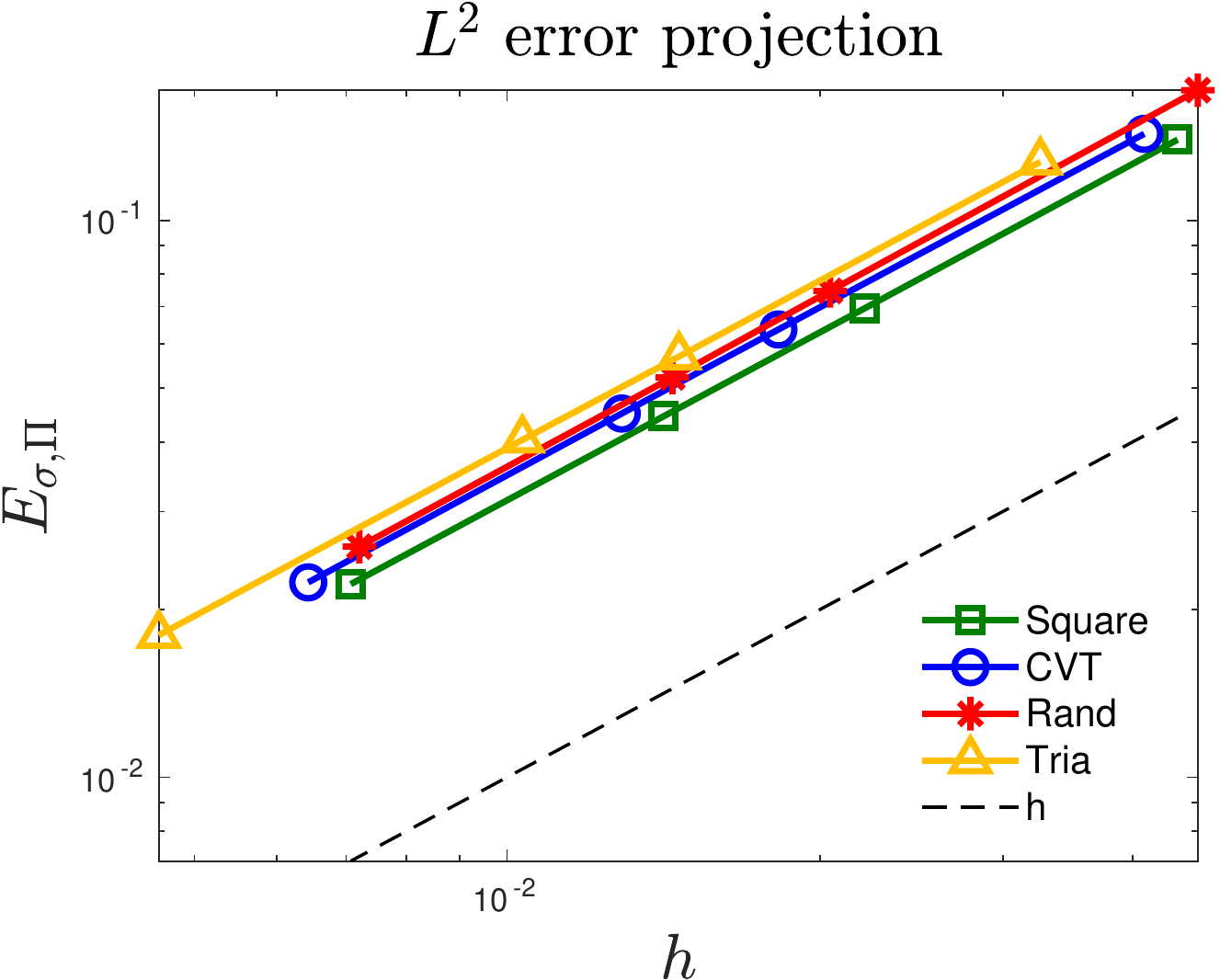}
	\caption{Convergence results. $h$-convergence results for the test case 2D and for all meshes.}\label{fig:convergence2D}
\end{figure}

\begin{figure}[!ht]
	\centering
	\includegraphics[width=\sizeGraphHybrid\textwidth,trim = 0mm 0mm 0mm 0mm, clip]{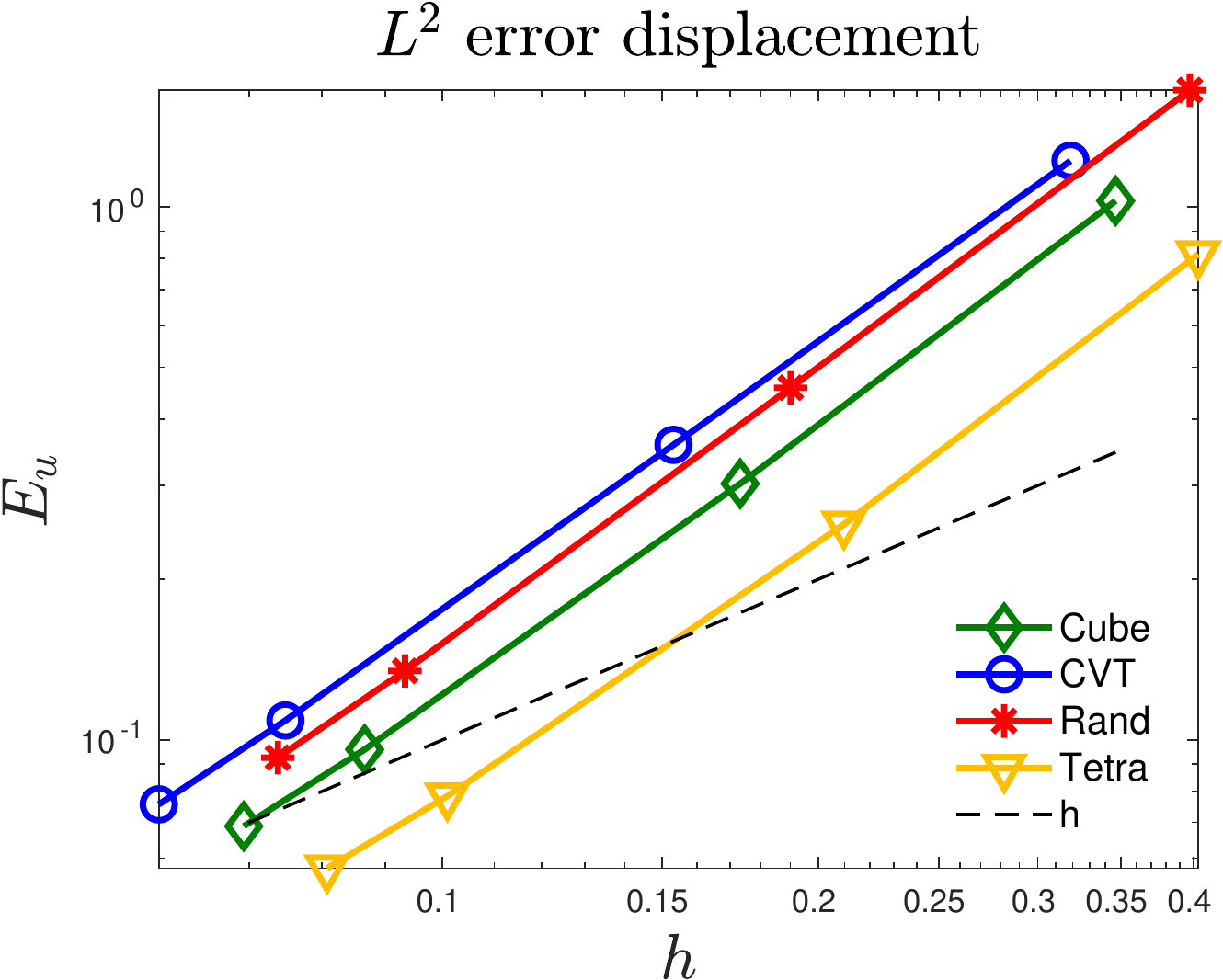}
	\hspace*{10pt}
	\includegraphics[width=\sizeGraphHybrid\textwidth,trim = 0mm 0mm 0mm 0mm, clip]{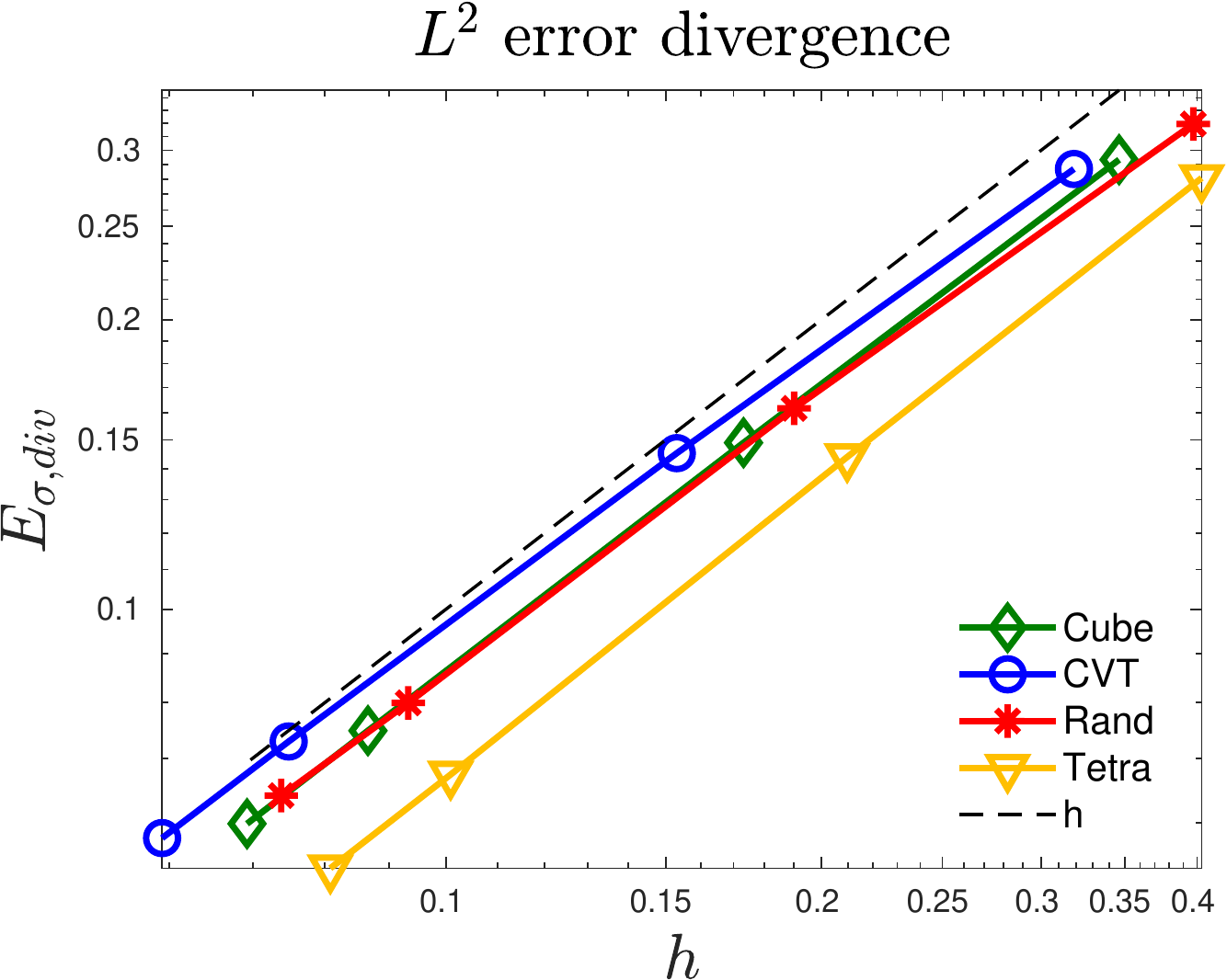}\\
	\vspace*{8pt}
	\includegraphics[width=\sizeGraphHybrid\textwidth,trim = 0mm 0mm 0mm 0mm, clip]{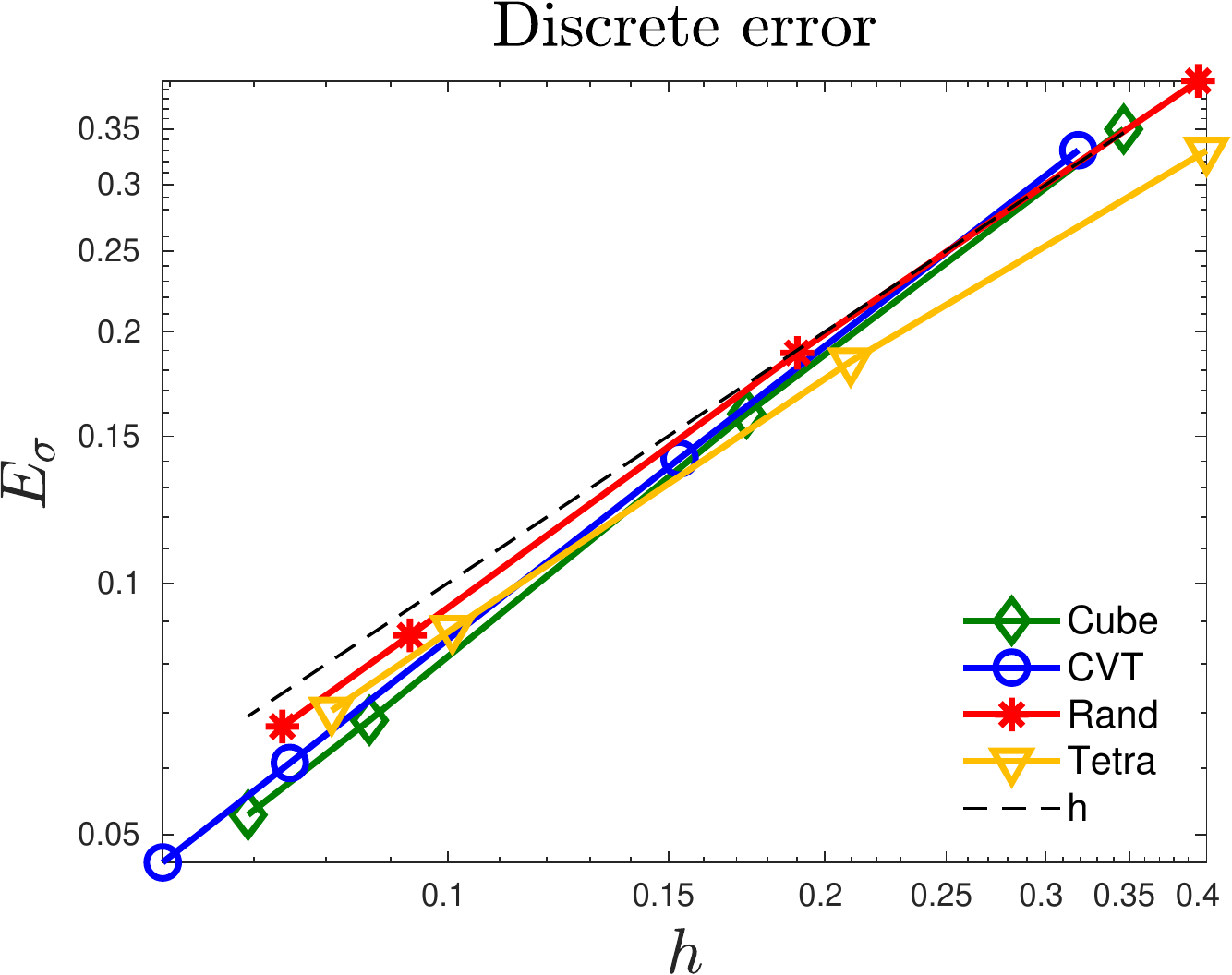}
	\hspace*{10pt}
	\includegraphics[width=\sizeGraphHybrid\textwidth,trim = 0mm 0mm 0mm 0mm, clip]{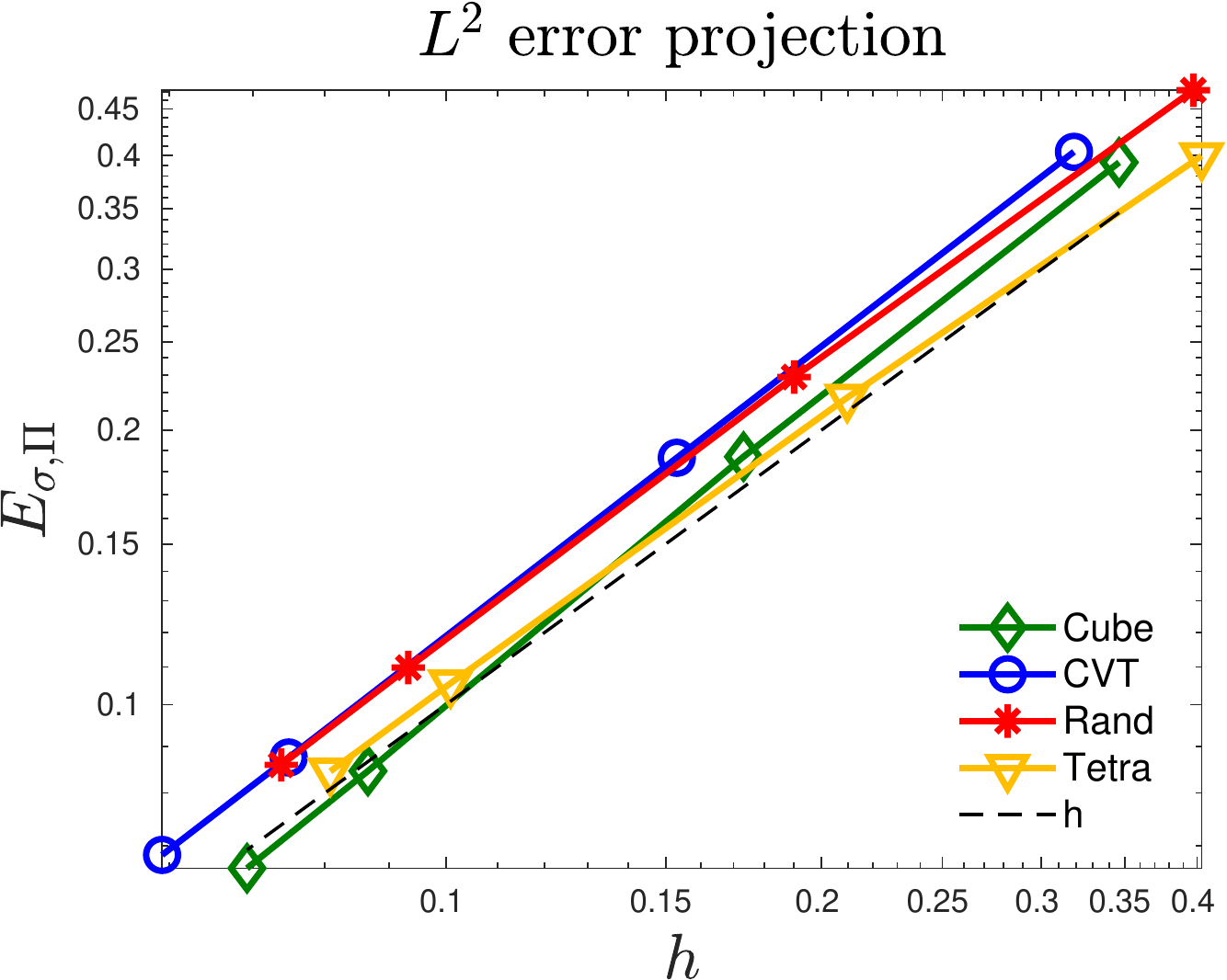}
	\caption{Convergence results. $h$-convergence results for the test case 3D and for all meshes.}\label{fig:convergence3D}
\end{figure}
Fig.~\ref{fig:convergence2D} and Fig.~\ref{fig:convergence3D} report the $h$-convergence of the proposed method for test case 2D and 3D, respectively. Relative errors are displayed. As expected, the hybridization leads to an asymptotic convergence rate equal to 1 for all the error norms and meshes (in fact, the hybridized schemes are equivalent to the original Hellinger-Reissner methods of Refs. \refcite{ARTIOLI2017155} and \refcite{DLV}). Moreover, the convergence graphs are very close to each others, which confirms the good robustness of the proposed VE method with respect to the mesh choice.
\subsection{Post-processing results}
The present section has two goals. First of all we numerically confirm the superconvergence result predicted by Theorem~\ref{theorem:superconvergence}. Then, we exhibit the accuracy of our post-processed displacement field.

\paragraph{Superconvergence.}
We consider the following error quantities:
\begin{itemize}
	\item[$\bullet$] $L^2$ error norm of the $\Pi_{RM}$-projection of the displacement field (cf. \eqref{eq:proj_rm}):
$ ||\bar\bbu_h - \bbu_h ||_0$. 

	According to Theorem~\ref{theorem:superconvergence}, the expected behaviour of such an error is $O(h^2)$ for sufficiently regular problems.
	\item[$\bullet$] $L^2$ error norm of the projection onto piecewise constants of the displacement field:
$||\Pi_{0}\bbu - \bbu_h ||_0$.

	 By our convergence analysis, it is straightforward to see that 
	such a quantity is $O(h)$.
\end{itemize}
\begin{figure}[!ht]
\centering
\includegraphics[width=\sizeGraphHybrid\textwidth,trim = 0mm 0mm 0mm 0mm, clip]{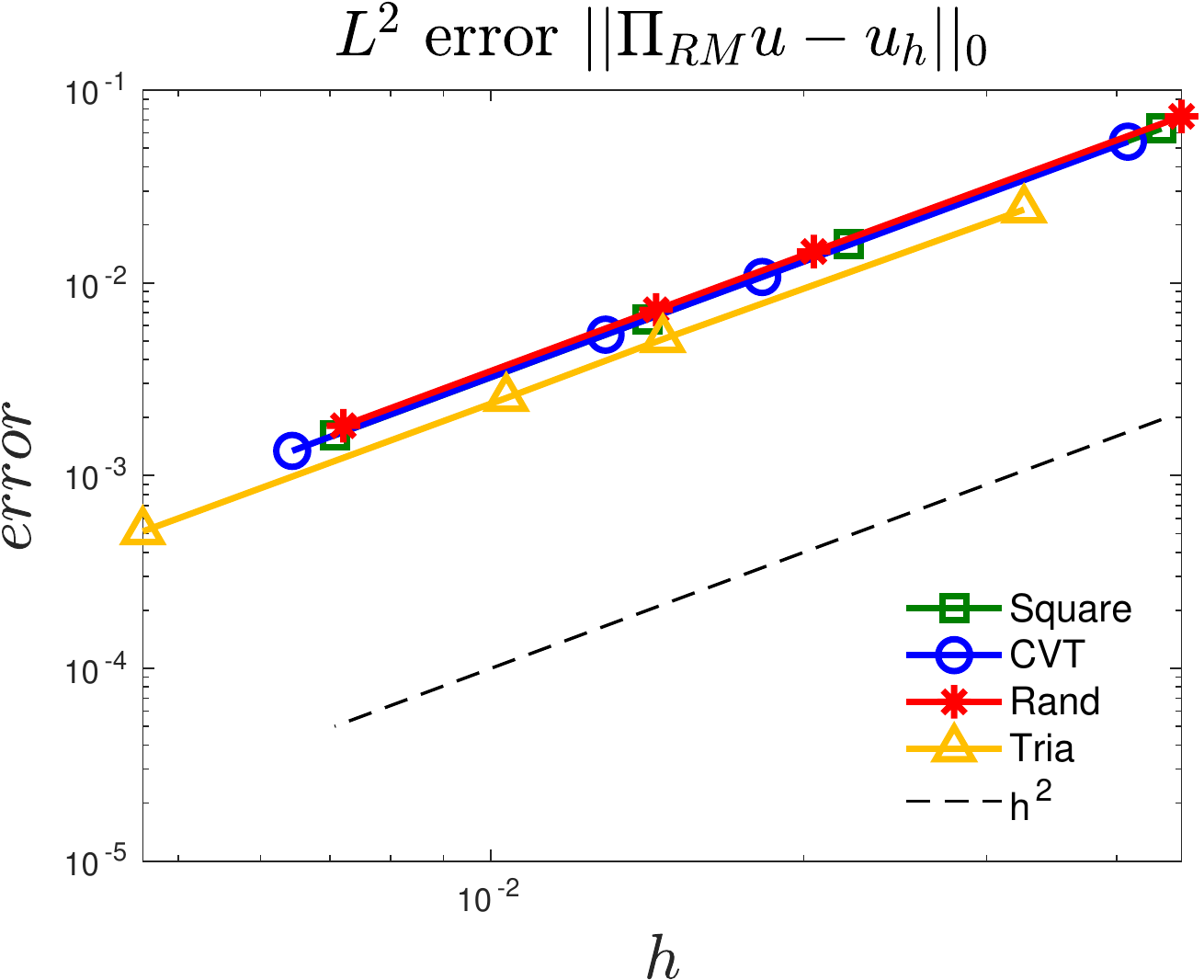}
\hspace*{10pt}
\includegraphics[width=\sizeGraphHybrid\textwidth,trim = 0mm 0mm 0mm 0mm, clip]{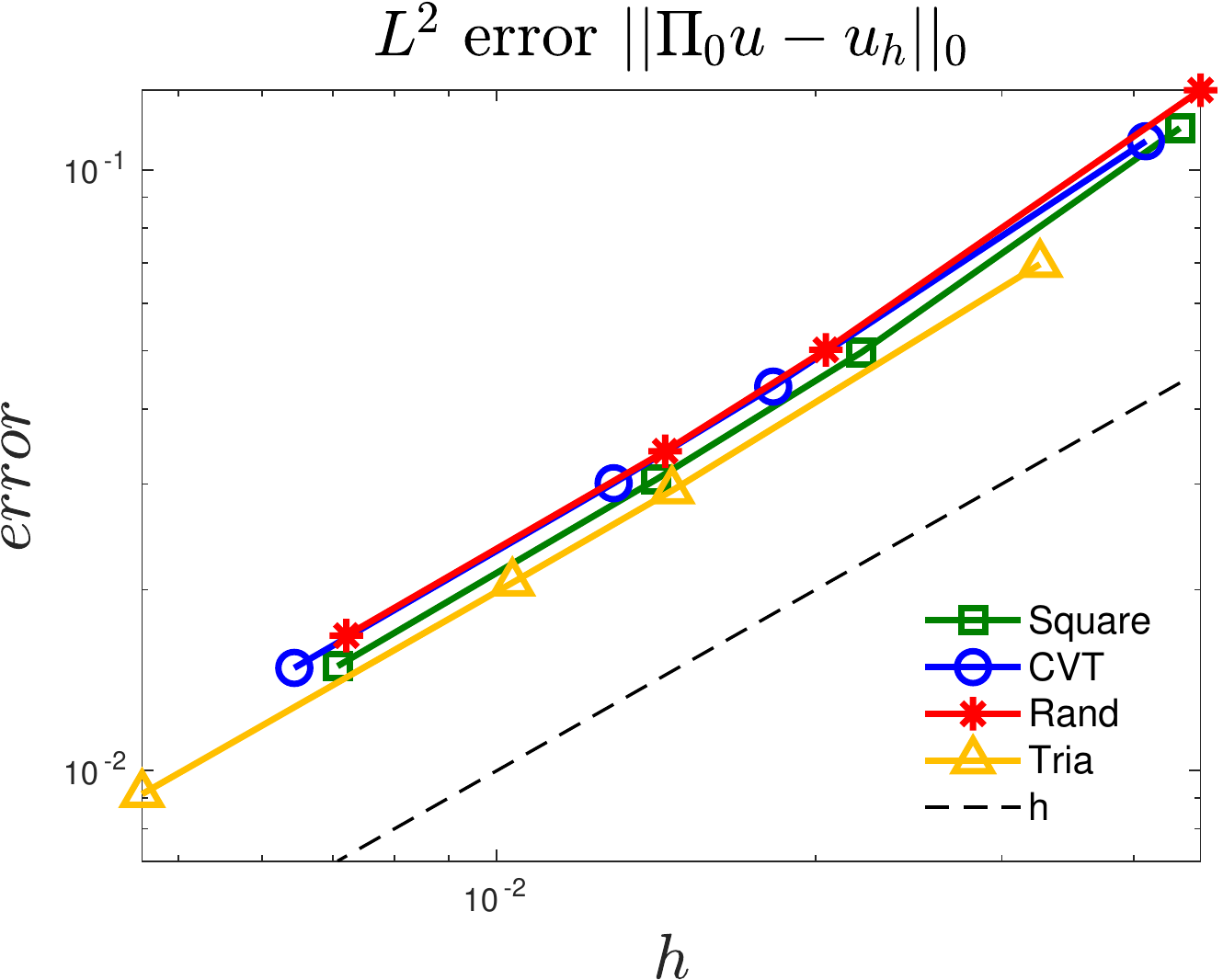}
\caption{Superconvergence results. $h$-convergence results for test case 2D for all meshes.}~\label{fig:superconvergence2D}
\end{figure}
\begin{figure}[!ht]
\centering
\renewcommand{\thesubfigure}{}
\includegraphics[width=\sizeGraphHybrid\textwidth,trim = 0mm 0mm 0mm 0mm, clip]{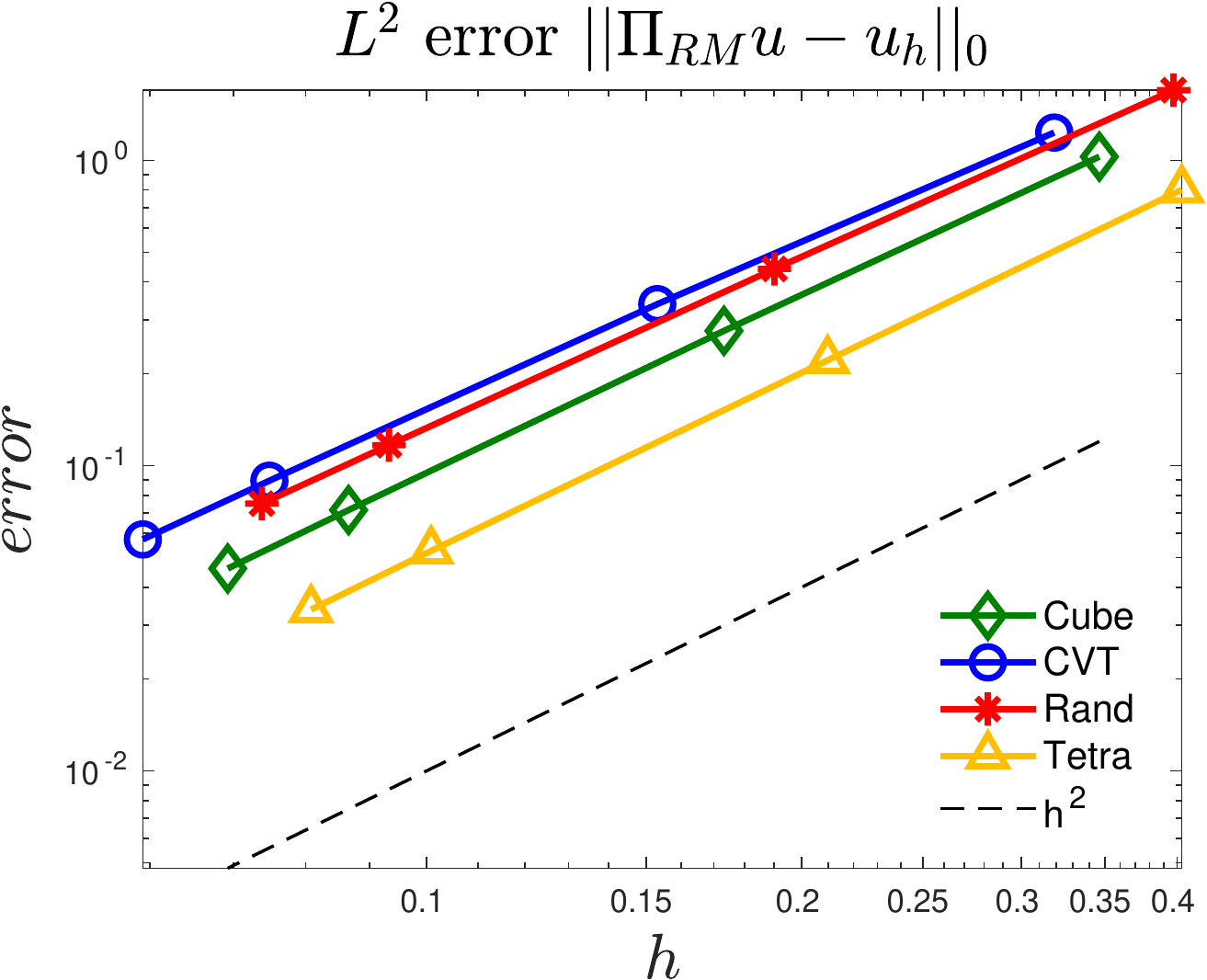}
\hspace*{10pt}
\includegraphics[width=\sizeGraphHybrid\textwidth,trim = 0mm 0mm 0mm 0mm, clip]{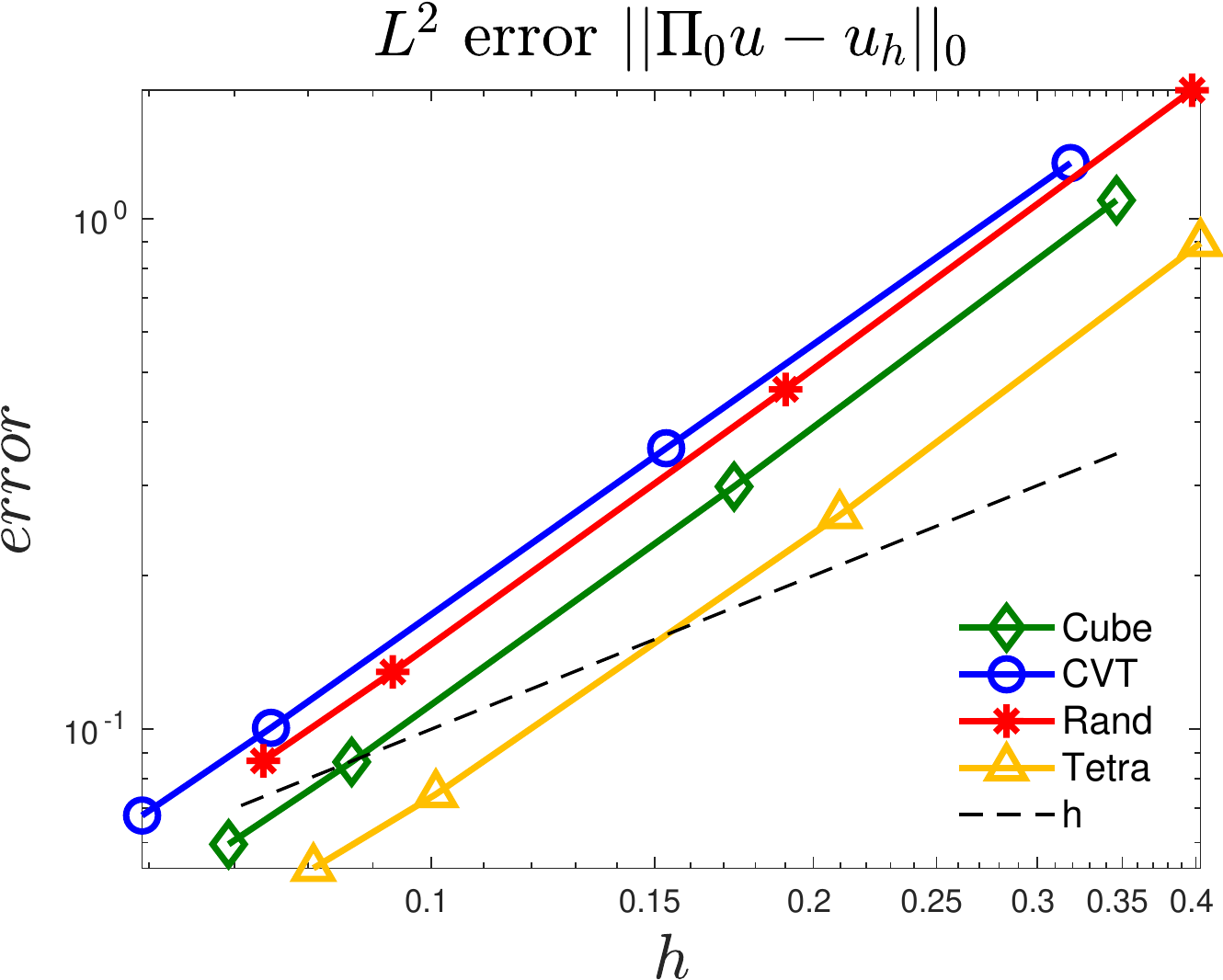}
\caption{Superconvergence results. $h$-convergence results for test case 3D for all meshes.}~\label{fig:superconvergence3D} 	
\end{figure}
In Fig.~\ref{fig:superconvergence2D} and Fig.~\ref{fig:superconvergence3D} we show the convergence graphs for the errors above. Again, relative errors are displayed. The convergence rate for the error norm $E_{\bbu_{RM}}$ is 2, in accordance with the theoretical results, see~\eqref{eq:superconvergence}. Instead, the error norm $E_{\bbu_0}$ does not exhibit the same behaviour: it is only $O(h)$. However, from these graphs we can also appreciate the robustness of the VEM with respect to element distortions. Indeed, the convergence lines for the four meshes (2D and 3D) are very close to each others.
\paragraph{Post-processing.} 
Since the VE post-processed displacement is not explicitly known inside the element, we introduce the following error measures
\begin{equation*}
E^0_{\bbu_h^*}:=||\bbu - \Pi^{\nabla}\bbu_h^*||_0\quad \mbox{and} \qquad
E^1_{\bbu_h^*}:=|\bbu - \Pi^{\nabla}\bbu_h^*|_{1,\Th},
\end{equation*}
where the projection operator $\Pi^{\nabla}$ is defined by~\eqref{eq:h_projection_post-processing}.  
However, we remark that on simplices the function $\bbu^*_h$ is indeed computable: it corresponds to the vectorial version of the  
non-conforming Finite Element post-processed solution detailed in Ref.~\refcite{CrouzeixRaviart}. In such a case, the operator $\Pi^{\nabla}$ is simply the identity. 
\begin{figure}[!ht]
	\centering
	\includegraphics[width=\sizeGraphHybrid\textwidth,trim = 0mm 0mm 0mm 0mm, clip]{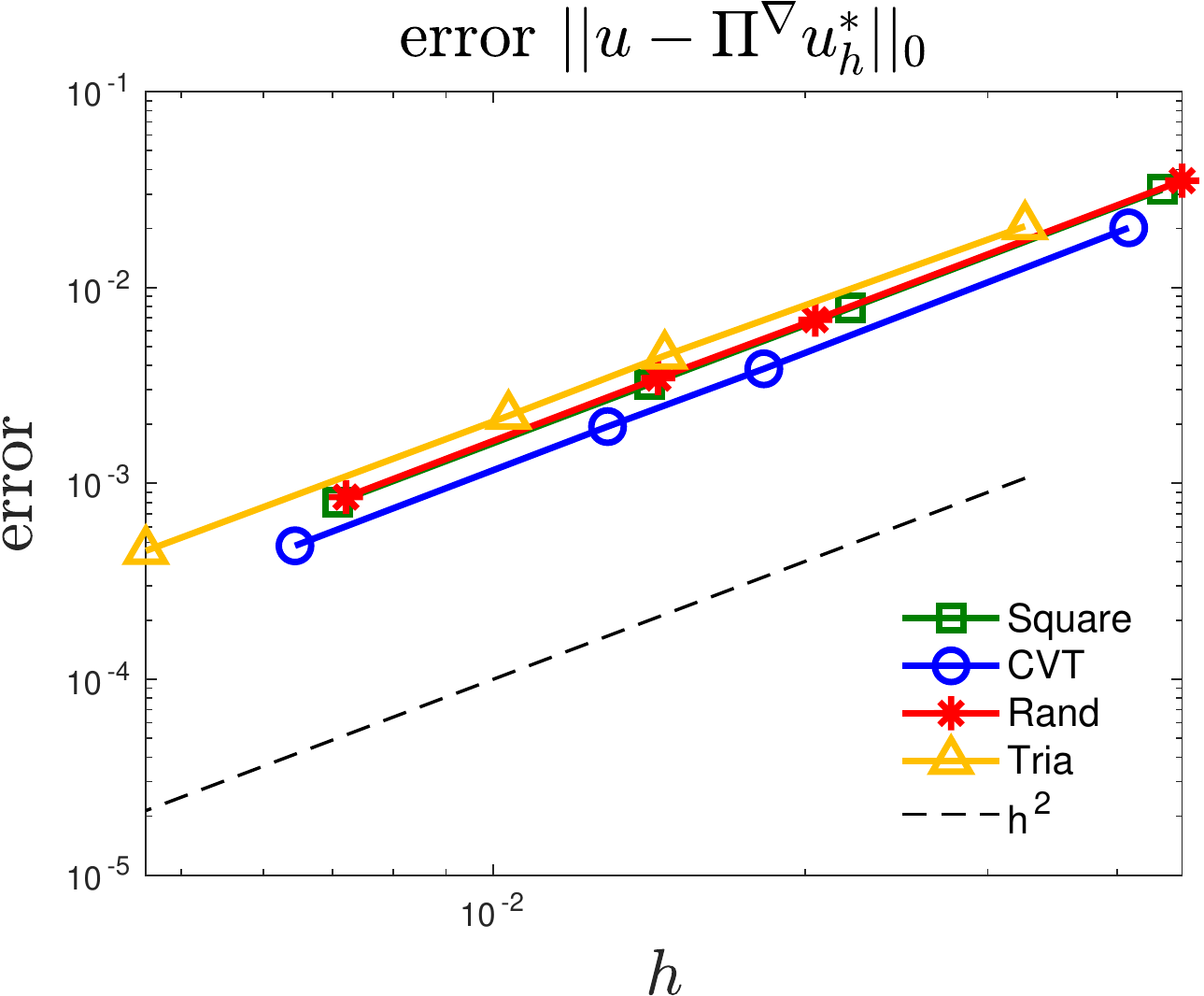}
	\hspace*{10pt}
	\includegraphics[width=\sizeGraphHybrid\textwidth,trim = 0mm 0mm 0mm 0mm, clip]{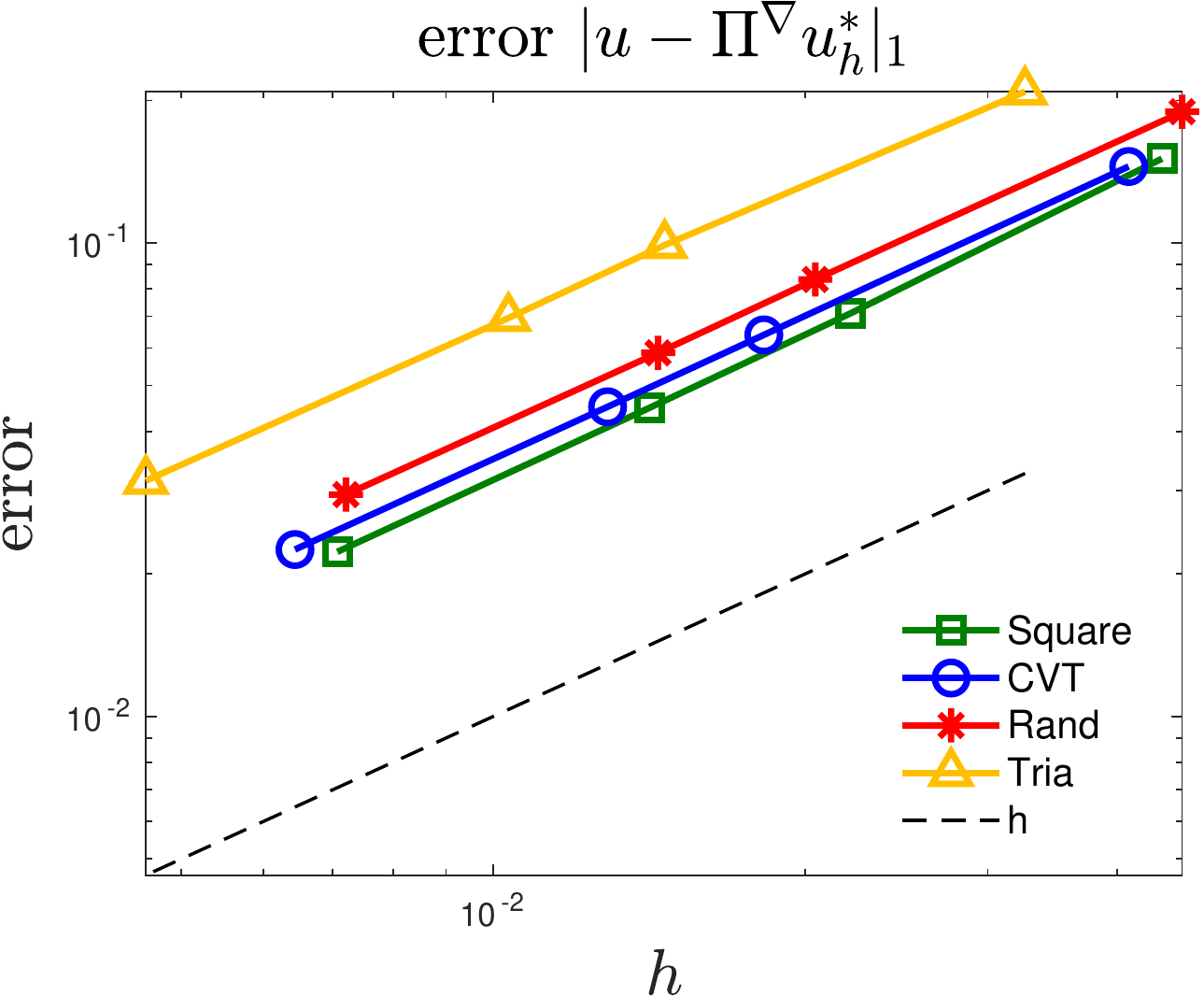}
	\caption{Post-processing. Convergence plots for the error $E^0_{\bbu_h^*}$ and $E^1_{\bbu_h^*}$ for test case 2D.}~\label{fig:post-processing2D}
\end{figure}
\begin{figure}[!ht]
	\centering
	\includegraphics[width=\sizeGraphHybrid\textwidth,trim = 0mm 0mm 0mm 0mm, clip]{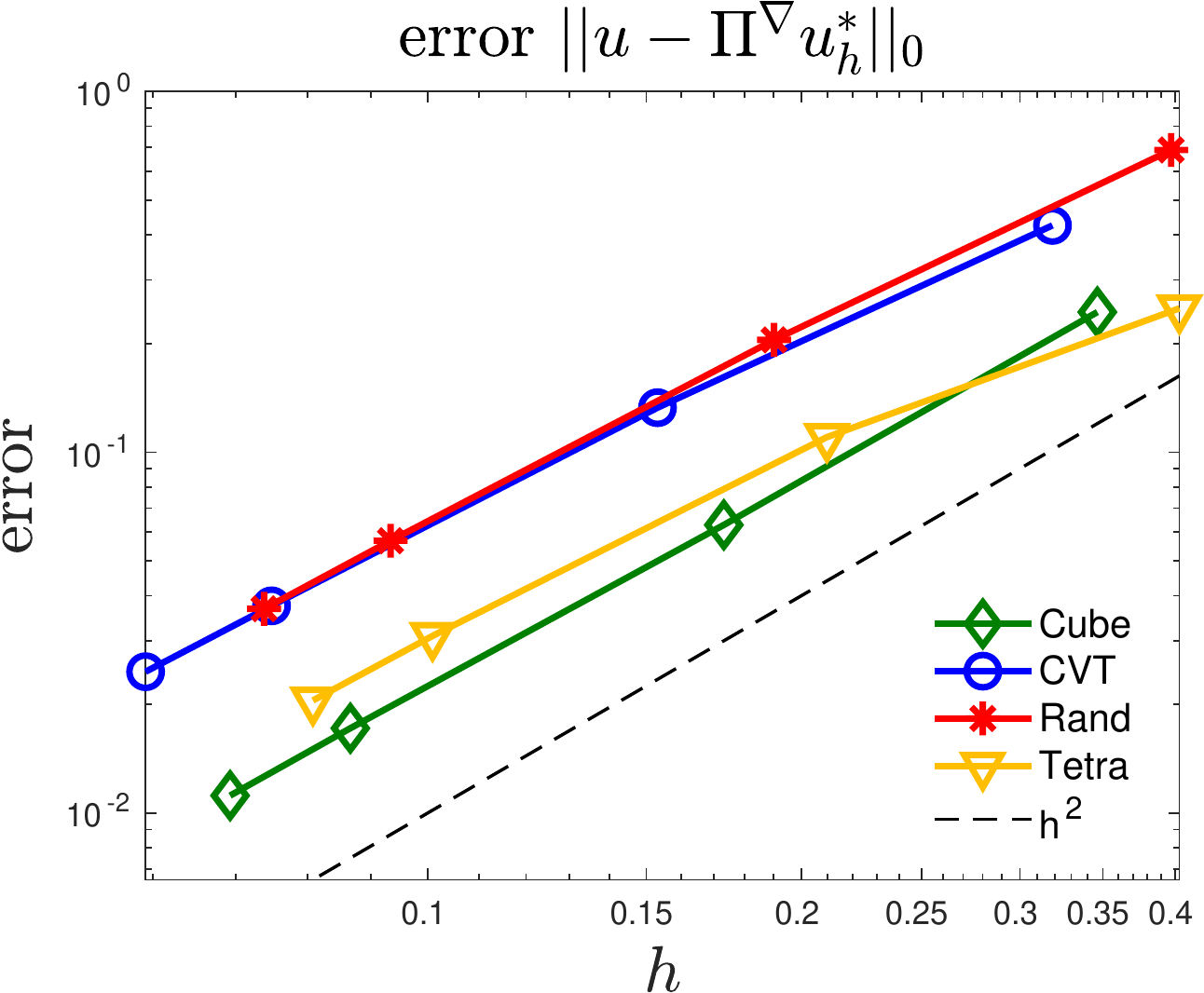}
	\hspace*{10pt}
	\includegraphics[width=\sizeGraphHybrid\textwidth,trim = 0mm 0mm 0mm 0mm, clip]{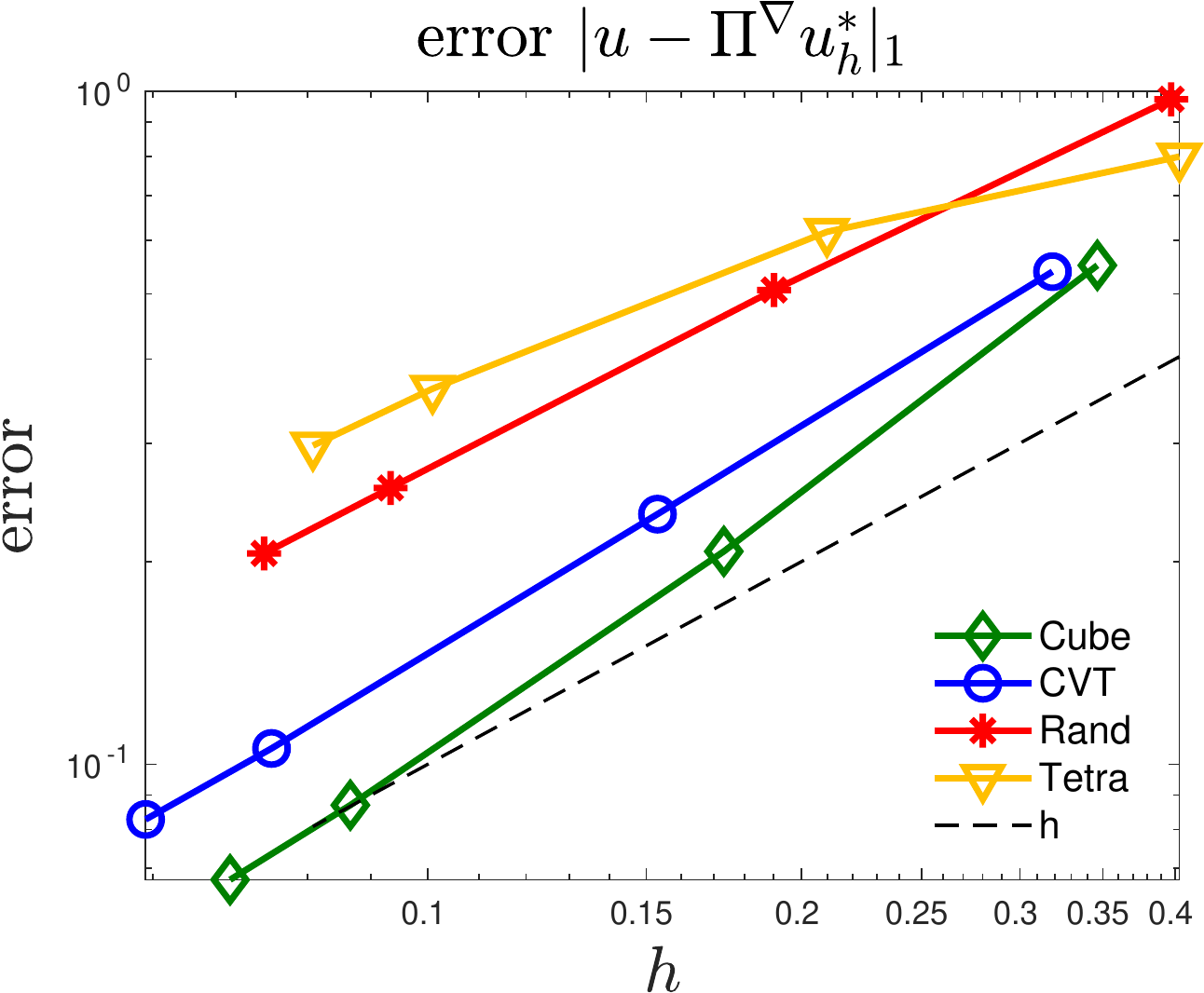}
	\caption{Post-processing. Convergence plots for the error $E^0_{\bbu_h^*}$ and $E^1_{\bbu_h^*}$ for test case 3D.}~\label{fig:post-processing3D}
\end{figure}

In Fig.~\ref{fig:post-processing2D} and Fig.~\ref{fig:post-processing3D} we report the convergence lines for the errors $E^0_{\bbu_h^*}$ and $E^1_{\bbu_h^*}$ for both test cases. Relative errors are displaced. The convergence rate for the error $E^0_{\bbu_h^*}$ is approximately 2, while for the error $E^1_{\bbu_h^*}$ is 1, as expected by Theorem~\ref{theorem:post-processing_result}. Although estimate \eqref{eq:H1inequalityMainTheorem} has been proved only for quasi-uniform meshes, our numerical tests suggests that the same convergence behaviour occurs for more general situations (e.g., \texttt{Rand} meshes are not quasi-uniform but a first order convergence rate takes place). 
 Moreover, the convergence lines of the each mesh are close to each others, showing, one more time, the VEM robustness with respect to the deformation of the mesh. 
\subsection{Comparison of solving time}
Our last numerical test concerns the effect of the hybridization on the solution time of the resulting linear system. Accordingly, we qualitatively  compare the solving times between the standard low-order VEM approach, cf. Refs.~\refcite{ARTIOLI2017155,DLV} and the hybridizated scheme procedure (see Sec.~\ref{section:hybrid}, and in particular subsection \ref{ss:static_cond}). 
We use the open-source library PETSc, see Ref.~\refcite{petsc-web-page}. In particular, we use the direct solver MUMPS: LU factorization for the standard method; Cholesky for the hybridized one. Moreover, we run our test only on one processor in order to have the same setting for both the cases. 

\begin{table}[!ht]
	\tbl{\label{t:Time1_2D}Comparison of solving time between standard approach and hybridization technique for test case 2D.}
	{\begin{tabular}{@{}cccccccc@{}} 
			\toprule
			&  & \texttt{Square}\hphantom{0000000000000} &   & \texttt{Tria}  \hphantom{0000000000000}&\\ \colrule
			{Step} & {Standard} & {Hybrid} & {Standard}  & {Hybrid}  \\ 
			{1} & {0.07}  & {0.07}\hphantom{0} {(33.87\%)} & {0.15} & \hphantom{0}{0.17}\hphantom{0} {(39.48\%)} \\
			{2} & {0.37}  & {0.33}\hphantom{0} {(41.05\%)} & {0.89} & \hphantom{0}{0.99}\hphantom{0} {(41.98\%)} \\
			{3} & {0.81}  & {0.93}\hphantom{0} {(47.70\%)} & {2.05} & \hphantom{0}{2.15}\hphantom{0} {(45.65\%)} \\
			{4} & {4.39}  & {5.11}\hphantom{0} {(55.59\%)} & {17.34}\hphantom{0} & {14.26}\hphantom{0} {(54.96\%)} \\
			\toprule
			&  & \hphantom{000}\texttt{CVT}\hphantom{0000000000000} &   & \texttt{Rand}  \hphantom{0000000000000}&\\ \colrule
			{Step} & {Standard} & {Hybrid} & {Standard}  & {Hybrid}  \\ 
			{1} & \hphantom{0}{0.16} & \hphantom{0}{0.21}\hphantom{0} {(37.88\%)} & \hphantom{0}{0.15} & \hphantom{0}{0.37}\hphantom{0} {(30.17\%)} \\
			{2} & \hphantom{0}{1.30} & \hphantom{0}{1.22}\hphantom{0} {(53.51\%)} & \hphantom{0}{1.10} & \hphantom{0}{1.22}\hphantom{0} {(51.44\%)} \\
			{3} & \hphantom{0}{3.63} & \hphantom{0}{2.96}\hphantom{0} {(58.00\%)} & \hphantom{0}{2.86} & \hphantom{0}{2.92}\hphantom{0} {(53.47\%)} \\
			{4} & {30.55} & {18.50}\hphantom{0} {(71.00\%)} &{23.13} &{16.78}\hphantom{0} {(65.81\%)} \\
			\botrule
	\end{tabular}}
\end{table}
\begin{table}[!ht]
	\tbl{\label{t:Time1}Comparison of solving time between standard approach and hybridization technique for test case 3D.}
	{\begin{tabular}{@{}cccccccc@{}} 
			\toprule
			&  & \texttt{Cube}\hphantom{0000000000000} &   & \texttt{Tetra}  \hphantom{0000000000000}&\\ \colrule
			{Step} & {Standard} & {Hybrid} & {Standard}  & {Hybrid}  \\
			{1} & \hphantom{000}{0.11} & \hphantom{00}{0.11}\hphantom{0} {(38.02\%)} & \hphantom{000}{0.12} & \hphantom{000}{0.11}\hphantom{0} {(32.14\%)} \\
			{2} & \hphantom{000}{5.74} & \hphantom{00}{3.09}\hphantom{0} {(82.06\%)} & \hphantom{000}{3.80} & \hphantom{000}{2.28}\hphantom{0} {(70.37\%)} \\
			{3} & \hphantom{0}{971.33} & {209.53}\hphantom{0} {(97.15\%)} & \hphantom{0}{568.12} & \hphantom{0}{284.78}\hphantom{0} {(97.43\%)} \\
			{4} & {4178.47} & {903.67}\hphantom{0} {(98.78\%)} & {3393.64} & {1409.92}\hphantom{0} {(98.33\%)} \\
			\toprule
			&  & \hphantom{0}\texttt{CVT}\hphantom{0000000000000} &   & \hphantom{0}\texttt{Rand}  \hphantom{0000000000000}&\\ \colrule
			{Step} & {Standard} & {Hybrid} & {Standard}  & {Hybrid}  \\
			{1} & \hphantom{00000}{0.86} & \hphantom{0000}{0.68}\hphantom{0} {(63.22\%)} & \hphantom{00000}{1.22} & \hphantom{0000}{0.91}\hphantom{0} {(67.32\%)} \\
			{2} & \hphantom{0000}{97.88} & \hphantom{000}{53.43}\hphantom{0} {(94.88\%)} & \hphantom{000}{161.21} & \hphantom{000}{72.13}\hphantom{0} {(95.04\%)} \\
			{3} & \hphantom{0}{29062.80} & \hphantom{0}{6877.68}\hphantom{0} {(99.56\%)} & \hphantom{0}{32015.50} & {14565.00}\hphantom{0} {(99.70\%)} \\
			{4} & {128626.00} & {41000.70}\hphantom{0} {(99.86\%)} & {172781.00} & {81037.80}\hphantom{0} {(99.91\%)} \\
			\botrule
	\end{tabular}}
\end{table}
In Table~\ref{t:Time1_2D} for the 2D case (resp., in Table~\ref{t:Time1} for the 3D case), we show a comparison between the solving time for the standard VE method and the time of the hybridization procedure (static condensation and solving time) for each mesh refinement step. Moreover, in the column ``Hybrid'', we also show the percentage of time used to solve the linear system~\eqref{eq:finalLinearSystem}. We can notice that, refining the meshes, the hybridization procedure has better performance (in time) than the standard procedure (the only exception is the 2D square mesh case, where probably the very particular structure of the matrix greatly helps in dealing with the linear system for the standard procedure).  
Furthermore, focusing only on the hybridization technique, we observe that the time improvement becomes more and more effective as the solving process time dominates over the one needed to deal with the static condensation (this occurs for larger and larger systems). All the quantities are expressed in seconds.

Finally, in Table \ref{tab:ratio2d3d} we display the ratios between the time needed to solve the hybridized problem and the original one, for both the 2D and the 3D cases, and considering the finest meshes. This quantity can be considered as an indicator of the gain when adopting the hybridization technique. As it can be seen, 3D problems exhibit the greatest improvement.

%

\begin{table}[!ht]
	\tbl{\label{tab:ratio2d3d}Ratio between the hybridization and the standard approach time for 2D and 3D cases with the finest meshes for each mesh type.}
	{\begin{tabular}{@{}ccccccc@{}} 
			\toprule &\hphantom{0000000000000}{2D}& & & & {3D}\hphantom{0000000000000} &\\
			\colrule
			&\texttt{Square}&  1.16 &\hspace{2cm} & \texttt{Cube}  & 0.21&\\
			&\texttt{Tria} &  0.82 &  & \texttt{Tetra}  & 0.41&\\
			&\texttt{CVT} &  0.60 &  & \texttt{CVT} &  0.31&\\
			&\texttt{Rand} & 0.72 &  & \texttt{Rand}  & 0.46&\\
			\botrule
	\end{tabular}}
\end{table}



\bibliographystyle{ws-m3as}
\bibliography{biblioSistemata}

\end{document}